\DeclareMathAlphabet{\smallchanc}{OT1}{pzc}%
                                 {m}{it}
\DeclareFontFamily{OT1}{pzc}{}
\DeclareFontShape{OT1}{pzc}{m}{it}%
             {<-> s * [1.100] pzcmi7t}{}
\DeclareMathAlphabet{\mathchanc}{OT1}{pzc}%
                                 {m}{it}
\newcommand{\sectionsize}{\relsize{-.5}}
\newcommand{\theoremsize}{\relsize{-.5}}
\newcommand{\mrsize}{\relsize{-.8}}
\renewcommand{\subsectionautorefname}{\sectionsize\sf \subsectionautorefname}
\@ifdefinable\equationname{\let\equationname\equationautorefname}
\def\equationautorefname~#1\@empty\@empty\null{\protect{\theoremsize\sf
    (#1\@empty\@empty\null)}}%
\@ifdefinable\AMSname{\let\AMSname\AMSautorefname}
\def\AMSautorefname~#1\@empty\@empty\null{\sf( #1\@empty\@empty\null)}%
\@ifdefinable\itemname{\let\itemname\itemautorefname}
\def\itemautorefname~#1\@empty\@empty\null{\mrsize{%
    {\sf #1}}\@empty\@empty\null%
}%
\newcommand{\basetheorem}[3]{%
    \newtheorem{#1}{#2}[#3]
    \newtheorem*{#1*}{#2}
    \expandafter\def\csname #1autorefname\endcsname{#2}
}%
\newcommand{\maketheorem}[3]{%
    \newaliascnt{#1}{#2}
    \newtheorem{#1}[#1]{\theoremsize\sf #3}
    \aliascntresetthe{#1}
    \expandafter\def\csname #1autorefname\endcsname{\theoremsize\sf #3}
    \newtheorem{#1*}{#3}
}%
\newcommand{\baseremark}[3]{%
    \newtheorem{#1}{#2}{#3}
    \newtheorem*{#1*}{#2}
    \expandafter\def\csname #1autorefname\endcsname{#2}
}%
\newcommand{\makeremark}[3]{%
    \newaliascnt{#1}{#2}
    \newtheorem{#1}[equation]{#3}
    \aliascntresetthe{#1}
    \expandafter\def\csname #1autorefname\endcsname{\theoremsize\sf #3}
    \newtheorem{#1*}{#3}
}%
\theoremstyle{plain}   
\newcommand{\mypagesize}{
\textwidth= 6.5in
\textheight=8.75in
\voffset-.5in
\hoffset-.75in
\marginparwidth=56pt
\footskip.5in
}
\newcounter{are-there-sections}
\newcommand{\zzzzz}{}
\DeclareMathAlphabet{\smallchanc}{OT1}{pzc}%
                                 {m}{it}
\DeclareFontFamily{OT1}{pzc}{}
\DeclareFontShape{OT1}{pzc}{m}{it}%
             {<-> s * [1.100] pzcmi7t}{}
\DeclareMathAlphabet{\mathchanc}{OT1}{pzc}%
                                 {m}{it}
\newcommand{\mcA}{\mathchanc{A}}
\newcommand{\mcD}{\mathchanc{D}}
\newcommand{\mcL}{\mathchanc{L}}
\newcommand{\mcR}{\mathchanc{R}}
\newcommand{\sO}{\mathscr{O}}
\newcommand{\sQ}{\mathscr{Q}}
\newcommand{\sfA}{{\sf A}}
\newcommand{\sfB}{{\sf B}}
\newcommand{\sfM}{{\sf M}}
\newcommand{\sfh}{{\sf h}}
\newcommand{\bC}{\mathbb{C}}
\newcommand{\bH}{\mathbb{H}}
\newcommand{\bN}{\mathbb{N}}
\newcommand{\bQ}{\mathbb{Q}}
\newcommand{\bR}{\mathbb{R}}
\newcommand{\bZ}{\mathbb{Z}}
\def\frm{\mathfrak{m}}
\newcommand{\frn}{\mathfrak{n}}
\DeclareSymbolFont{largesymbolsA}{U}{jkpexa}{m}{n}
\DeclareMathSymbol{\varprod}{\mathop}{largesymbolsA}{16}
\newcommand{\LeftEqNo}{\let\veqno\@@leqno}
\newcommand{\ol}{\overline}
\newcommand{\into}{\hookrightarrow}
\newcommand{\onto}{\twoheadrightarrow}
\newcommand{\properideal}%
        {\subsetneq}
\newcommand{\wt}{\widetilde}
\newcommand{\what}{\widehat}
\newcommand{\rdown}[1]{\lfloor{#1}\rfloor}
\newcommand{\leteq}{\colon\!\!\!=}
\newcommand\dash[1]{\rule[-.2ex]{#1}{.4pt}}
\DeclareMathOperator{\ann}{\mathchanc{ann}}
\DeclareMathOperator{\kar}{char}
\DeclareMathOperator{\codim}{codim}
\DeclareMathOperator{\depth}{{depth}}
\newcommand{\sExt}[0]{{\mathchanc{Ext}}}
\DeclareMathOperator{\Hom}{Hom}
\newcommand{\sHom}[0]{{\mathchanc{Hom}}}
\DeclareMathOperator{\im}{{im}}
\DeclareMathOperator{\length}{{length}}
\newcommand{\lotimes}{\overset{L}{\otimes}}
\newcommand{\hotimes}[0]{%
  \protect{\ensuremath{\kern.1em{{%
  \raisebox{.5\depth}{$\scriptstyle[$}
  }}\kern-.25em\otimes\kern-.25em{%
  \raisebox{.5\depth}{$\scriptstyle]$}
  }\kern.1em}}}
\DeclareMathOperator{\Ob}{{Ob}}
\DeclareMathOperator{\pic}{{Pic}}
\newcommand{\red}{\mathrm{red}}
\DeclareMathOperator{\Spec}{{Spec}}
\DeclareMathOperator{\Soc}{{Soc}}
\DeclareMathOperator{\supp}{{supp}}
\DeclareMathOperator{\skvert}{{\,\vert\,}}
\newcommand{\factor}[2]{\left. \raise .2em\hbox{\ensuremath{#1}\vphantom{$I^d$}}
\hskip -.1em \right/ \hskip -.4em \raise -.3em\hbox{\ensuremath{#2}}}%
\newcommand\mtimes[3]{{\varprod_{#1}^{#2}}_{\raise 1ex \hbox{\scriptsize #3}}}%
\newcommand{\myR}{{\mcR\!}}
\newcommand{\myL}{{\mcL\!}}
\newcommand{\blank}{\dash{1em}}
\newcommand{\kdot}{{{\,\begin{picture}(1,1)(-1,-2)\circle*{2}\end{picture}\,}}}
\newcommand{\cmx}[1]{{#1}^{\raisebox{.15em}{\ensuremath\kdot}}}
\newcommand{\dcx}[1]{{\omega}^\kdot_{#1}}
\newcommand{\Om}{\underline{\Omega}}
\def\dimcoh#1.#2.#3.{h^{#1}(#2,#3)}
\def\hypcoh#1.#2.#3.{\mathbb H_{\vphantom{l}}^{#1}(#2,#3)}
\def\loccoh#1.#2.#3.#4.{H^{#1}_{#2}(#3,#4)}
\def\dimloccoh#1.#2.#3.#4.{h^{#1}_{#2}(#3,#4)}
\def\lochypcoh#1.#2.#3.#4.{\mathbb H^{#1}_{#2}(#3,#4)}
\def\seslong#1.#2.#3.{0  \longrightarrow  #1   \longrightarrow 
 #2 \longrightarrow #3 \longrightarrow 0} 
\def\sesshort#1.#2.#3.{0
 \rightarrow #1 \rightarrow #2 \rightarrow #3 \rightarrow 0}
\def\dist#1.#2.#3.{  #1   \longrightarrow 
 #2 \longrightarrow #3 \stackrel{+1}{\longrightarrow} } 
\def\CDdist#1.#2.#3.{  #1   @>>>  #2  @>>>   #3 @>+1>> }  
\def\shortses#1.#2.#3.{0  \rightarrow  #1   \rightarrow 
 #2  \rightarrow   #3 \rightarrow  0}
\def\shortdist#1.#2.#3.{  #1   \rightarrow 
 #2  \rightarrow   #3 \stackrel{+1}{\rightarrow} }  
\def\ddist#1.#2.#3.#4.#5.#6.{\CD
#1 @>>> #2 @>>> #3 @>+1>> \\
@VVV @VVV @VVV \\
#4 @>>> #5 @>>> #6 @>+1>> 
\endCD}
\def\ddistun#1.#2.#3.#4.#5.#6.{\CD
#1 @>>> #2 @>>> #3 @>+1>> \\
@. @VVV @VVV  \\
#4 @>>> #5 @>>> #6 @>+1>> 
\endCD}
\def\Iff#1#2#3{
\hfil\hbox{\hsize =#1
\vtop{\noin #2}
\hskip.5cm 
\lower.5\baselineskip\hbox{$\Leftrightarrow$}\hskip.5cm
\vtop{\noin #3}}\hfil\medskip}
\newcommand{\union}\cup
\newcommand{\intersect}\cap
\newcommand{\Union}\bigcup
\newcommand{\Intersect}\bigcap
\def\myoplus#1.#2.{\underset #1 \to {\overset #2 \to \oplus}}
\newcommand{\resto}[1]{\raise -.5ex\hbox{$\vert$}_{#1}}
\def\qis{\,{\simeq}_{\text{qis}}\,}
\newcommand{\qtq}[1]{\quad\mbox{#1}\quad}
\newcommand{\tsum}[0]{\textstyle{\sum}}
\newcommand{\ses}{short exact sequence\xspace}
\newcommand\dbcx[1]{\underline\Omega_{#1}^0}
\newcommand{\DB}{Du~Bois\xspace}
\newcommand{\sings}{singularities\xspace}
\begin{document}
\makeatletter
\definecolor{brick}{RGB}{204,0,0}
\def\@cite#1#2{{%
 \m@th\upshape\mdseries[{\sffamily\relsize{-.5}#1}{\if@tempswa,
   \sffamily\relsize{-.5}\color{brick} #2\fi}]}}
\newcommand{\sandor}{{\color{blue}{S\'andor \mdyydate\today}}}
\newenvironment{refmr}{}{}
\newcommand{\biblio}{%
\bibliographystyle{/home/kovacs/tex/TeX_input/skalpha} 
\bibliography{/home/kovacs/tex/TeX_input/Ref} 
}
%
\definecolor{refblue}{RGB}{0,0,128}
        \newcommand\refblue{\color{refblue}}
\definecolor{mydarkblue}{RGB}{10,92,153}
        \newcommand\mydarkblue{\color{mydarkblue}}
\definecolor{fuchsia}{RGB}{255, 0, 255}
        \newcommand\fuchsia{\color{fuchsia}}
\newcommand\change[1]{{\begin{color}{mydarkblue}\sfbf #1\end{color}}}
\newcommand\mchange[1]{{\begin{color}{mydarkblue}\mathbf #1\end{color}}}
\newcommand\keep[1]{{\begin{color}{fuchsia}\sfbf #1\end{color}}}
\newcommand\james{M\hskip-.1ex\raise .575ex \hbox{\text{c}}\hskip-.075ex Kernan\xspace}
\newcommand\ifft{if and only if\xspace}
\newcommand\sfref[1]{{\sf\protect{\relsize{-.5}\ref{#1}}}}
\newcommand\stepref[1]{{\sf\protect{\relsize{-.5}\refblue Step~\ref{#1}}}}
\newcommand\demoref[1]{{\sf(\protect{\relsize{-.5}\ref{#1}})}}
\renewcommand\eqref{\demoref}

%
\renewcommand\thesubsection{\thesection.\Alph{subsection}}
\renewcommand\subsection{
  \renewcommand{\sfdefault}{phv}
  \@startsection{subsection}%
  {2}{0pt}{-\baselineskip}{.2\baselineskip}{\raggedright
    \sffamily\itshape\relsize{-.5}
  }}
\renewcommand\section{
  \renewcommand{\sfdefault}{phv}
  \@startsection{section} %
  {1}{0pt}{\baselineskip}{.2\baselineskip}{\centering
    \sffamily
    \scshape
}}

\setlist[enumerate, 1]{itemsep=3pt,topsep=3pt,leftmargin=1.5em,font=\upshape,
  label={(\roman*)}}
\newlist{enumfull}{enumerate}{1}
\setlist[enumfull]{itemsep=3pt,topsep=3pt,leftmargin=3.25em,font=\upshape,
  label={(\thethm.\arabic*\/)}}
\newlist{enumbold}{enumerate}{1}
\setlist[enumbold]{itemsep=3pt,topsep=3pt,leftmargin=1.95em,font=\upshape,
  label={
    \textbf{({\it\textbf{\roman*}}\/)}}}
\newlist{enumalpha}{enumerate}{1}
\setlist[enumalpha]{itemsep=3pt,topsep=3pt,leftmargin=2em,font=\upshape,
  label=(\alph*\/)}
\newlist{widemize}{itemize}{1}
\setlist[widemize]{itemsep=3pt,topsep=3pt,leftmargin=1em,label=$\bullet$}
\newlist{widenumerate}{enumerate}{1}
\setlist[widenumerate]{itemsep=3pt,topsep=3pt,leftmargin=1.75em,label=(\roman*)}
\newlist{widenumalpha}{enumerate}{1}
\setlist[widenumalpha]{itemsep=3pt,topsep=3pt,leftmargin=1.5em,label=(\alph*)}
\newcounter{parentthmnumber}
\setcounter{parentthmnumber}{0}
\newcounter{currentparentthmnumber}
\setcounter{currentparentthmnumber}{0}
\newcounter{nexttag}
\newcommand{\setnexttag}{%
  \setcounter{nexttag}{\value{enumi}}%
  \addtocounter{nexttag}{1}%
}
\newcommand{\placenexttag}{%
\tag{\roman{nexttag}}%
}

\newenvironment{thmlista}{%
\label{parentthma}
\begin{enumerate}
}{%
\end{enumerate}
}
\newlist{thmlistaa}{enumerate}{1}
\setlist[thmlistaa]{label=(\arabic*), ref=\autoref{parentthm}\thethm(\arabic*)}
\newcommand*{\parentthmlabeldef}{%
  \expandafter\newcommand
  \csname parentthm\the\value{parentthmnumber}\endcsname
}
\newcommand*{\ptlget}[1]{%
  \romannumeral-`\x
  \ltx@ifundefined{parentthm\number#1}{%
    \ltx@space
    \parentthmundefined
  }{%
    \expandafter\ltx@space
    \csname mymacro\number#1\endcsname
  }%
}  
\newcommand*{\parentthmundefined}{\textbf{??}}
\parentthmlabeldef{parentthm}
\newenvironment{thmlistr}{%
\label{parentthm}
\begin{thmlistrr}}{%
\end{thmlistrr}}
\newlist{thmlistrr}{enumerate}{1}
\setlist[thmlistrr]{label=(\roman*), ref=\autoref{parentthm}(\roman*)}
\newcounter{proofstep}%
\setcounter{proofstep}{0}%
\newcommand{\pstep}[1]{%
  \smallskip
  \noindent
  \emph{{\sc Step \arabic{proofstep}:} #1.}\addtocounter{proofstep}{1}}
\newcounter{lastyear}\setcounter{lastyear}{\the\year}
\addtocounter{lastyear}{-1}
\newcommand\sideremark[1]{%
\normalmarginpar
\marginpar
[
\hskip .45in
\begin{minipage}{.75in}
\tiny #1
\end{minipage}
]
{
\hskip -.075in
\begin{minipage}{.75in}
\tiny #1
\end{minipage}
}}
\newcommand\rsideremark[1]{
\reversemarginpar
\marginpar
[
\hskip .45in
\begin{minipage}{.75in}
\tiny #1
\end{minipage}
]
{
\hskip -.075in
\begin{minipage}{.75in}
\tiny #1
\end{minipage}
}}
\newcommand\Index[1]{{#1}\index{#1}}
\newcommand\inddef[1]{\emph{#1}\index{#1}}
\newcommand\noin{\noindent}
\newcommand\hugeskip{\bigskip\bigskip\bigskip}
\newcommand\smc{\sc}
\newcommand\dsize{\displaystyle}
\newcommand\sh{\subheading}
\newcommand\nl{\newline}
\newcommand\input /home/kovacs/tex/latex/{\input /home/kovacs/tex/latex/} 
\newcommand\Get{\Input /home/kovacs/tex/latex/} 
\newcommand\toappear{\rm (to appear)}
\newcommand\mycite[1]{[#1]}
\newcommand\myref[1]{(\ref{#1})}
\newcommand{\parref}[1]{\eqref{\bf #1}}
\newcommand\myli{\hfill\newline\smallskip\noindent{$\bullet$}\quad}
\newcommand\vol[1]{{\bf #1}\ } 
\newcommand\yr[1]{\rm (#1)\ } 
\newcommand\cf{cf.\ \cite}
\newcommand\mycf{cf.\ \mycite}
\newcommand\te{there exist\xspace}
\newcommand\st{such that\xspace}
\newcommand\CM{Cohen-Macaulay\xspace}
\newcommand\GR{Grauert-Riemenschneider\xspace}
\newcommand\notinclass{{\relsize{-.5}\sf [not discussed in class]}\xspace}
\newcommand\myskip{3pt}
\newtheoremstyle{bozont}{3pt}{3pt}%
     {\itshape}
     {}
     {\bfseries}
     {.}
     {.5em}
     {\thmname{#1}\thmnumber{ #2}\thmnote{\normalsize
        \ \rm #3}}
\newtheoremstyle{bozont-sub}{3pt}{3pt}%
     {\itshape}
     {}
     {\bfseries}
     {.}
     {.5em}
     {\thmname{#1}\ \arabic{section}.\arabic{thm}.\thmnumber{#2}\thmnote{\normalsize
 \ \rm #3}}
\newtheoremstyle{bozont-named-thm}{3pt}{3pt}%
     {\itshape}
     {}
     {\bfseries}
     {.}
     {.5em}
     {\thmname{#1}\thmnumber{#2}\thmnote{ #3}}
\newtheoremstyle{bozont-named-bf}{3pt}{3pt}%
     {}
     {}
     {\bfseries}
     {.}
     {.5em}
     {\thmname{#1}\thmnumber{#2}\thmnote{ #3}}
\newtheoremstyle{bozont-named-sf}{3pt}{3pt}%
     {}
     {}
     {\sffamily}
     {.}
     {.5em}
     {\thmname{#1}\thmnumber{#2}\thmnote{ #3}}
\newtheoremstyle{bozont-named-sc}{3pt}{3pt}%
     {}
     {}
     {\scshape}
     {.}
     {.5em}
     {\thmname{#1}\thmnumber{#2}\thmnote{ #3}}
\newtheoremstyle{bozont-named-it}{3pt}{3pt}%
     {}
     {}
     {\itshape}
     {.}
     {.5em}
     {\thmname{#1}\thmnumber{#2}\thmnote{ #3}}
\newtheoremstyle{bozont-sf}{3pt}{3pt}%
     {}
     {}
     {\sffamily}
     {.}
     {.5em}
     {\thmname{#1}\thmnumber{ #2}\thmnote{\normalsize
 \ \rm #3}}
\newtheoremstyle{bozont-sc}{3pt}{3pt}%
     {}
     {}
     {\scshape}
     {.}
     {.5em}
     {\thmname{#1}\thmnumber{ #2}\thmnote{\normalsize
 \ \rm #3}}
\newtheoremstyle{bozont-remark}{3pt}{3pt}%
     {}
     {}
     {\scshape}
     {.}
     {.5em}
     {\thmname{#1}\thmnumber{ #2}\thmnote{\normalsize
 \ \rm #3}}
\newtheoremstyle{bozont-subremark}{3pt}{3pt}%
     {}
     {}
     {\scshape}
     {.}
     {.5em}
     {\thmname{#1}\ \arabic{section}.\arabic{thm}.\thmnumber{#2}\thmnote{\normalsize
 \ \rm #3}}
\newtheoremstyle{bozont-def}{3pt}{3pt}%
     {}
     {}
     {\bfseries}
     {.}
     {.5em}
     {\thmname{#1}\thmnumber{ #2}\thmnote{\normalsize
 \ \rm #3}}
\newtheoremstyle{bozont-reverse}{3pt}{3pt}%
     {\itshape}
     {}
     {\bfseries}
     {.}
     {.5em}
     {\thmnumber{#2.}\thmname{ #1}\thmnote{\normalsize
 \ \rm #3}}
\newtheoremstyle{bozont-reverse-sc}{3pt}{3pt}%
     {\itshape}
     {}
     {\scshape}
     {.}
     {.5em}
     {\thmnumber{#2.}\thmname{ #1}\thmnote{\normalsize
 \ \rm #3}}
\newtheoremstyle{bozont-reverse-sf}{3pt}{3pt}%
     {\itshape}
     {}
     {\sffamily}
     {.}
     {.5em}
     {\thmnumber{#2.}\thmname{ #1}\thmnote{\normalsize
 \ \rm #3}}
\newtheoremstyle{bozont-remark-reverse}{3pt}{3pt}%
     {}
     {}
     {\sc}
     {.}
     {.5em}
     {\thmnumber{#2.}\thmname{ #1}\thmnote{\normalsize
 \ \rm #3}}
\newtheoremstyle{bozont-def-reverse}{3pt}{3pt}%
     {}
     {}
     {\sffamily}
     {.}
     {.5em}
     {\thmnumber{{\relsize{-.5}(#2)}}\thmname{ {\relsize{-.25}#1}}\thmnote{{\relsize{-.5}\ \sf #3}}}
\newtheoremstyle{bozont-def-newnum-reverse}{3pt}{3pt}%
     {}
     {}
     {\bfseries}
     {}
     {.5em}
     {\thmnumber{#2.}\thmname{ #1}\thmnote{\normalsize
 \ \rm #3}}
\newtheoremstyle{bozont-def-newnum-reverse-plain}{3pt}{3pt}%
   {}
   {}
   {}
   {}
   {.5em}
   {\thmnumber{\!(#2)}\thmname{ #1}\thmnote{\normalsize
 \ \rm #3}}
\newtheoremstyle{bozont-number}{3pt}{3pt}%
   {}
   {}
   {}
   {}
   {0pt}
   {\thmnumber{\!(#2)} }
\newtheoremstyle{bozont-say}{3pt}{3pt}%
     {}
     {}
     {\sffamily}
     {.}
     {.5em}
     {\thmnumber{{\relsize{-.5}\S#2}}\thmname{ {\relsize{-.25}#1}}%
       \ \thmnote{
         \it #3}}
\newtheoremstyle{bozont-subsay}{3pt}{3pt}%
     {}
     {}
     {\sffamily}
     {.}
     {.5em}
     {\thmnumber{{\relsize{-.5}\S\S#2}}\thmname{ {\relsize{-.25}#1}}\thmnote{{\relsize{-.5}\ \sf #3}}}
\newtheoremstyle{bozont-step}{3pt}{3pt}%
   {\itshape}
   {}
   {\scshape}
   {}
   {.5em}
   {$\boxed{\text{\sc \thmname{#1}~\thmnumber{#2}:\!}}$}
\theoremstyle{bozont}    
\ifnum \value{are-there-sections}=0 {%
  \basetheorem{proclaim}{Theorem}{}
} 
\else {%
  \basetheorem{proclaim}{Theorem}{section}
} 
\fi
\maketheorem{thm}{proclaim}{Theorem}
\maketheorem{mainthm}{proclaim}{Main Theorem}
\maketheorem{cor}{proclaim}{Corollary} 
\maketheorem{cors}{proclaim}{Corollaries} 
\maketheorem{lem}{proclaim}{Lemma} 
\maketheorem{prop}{proclaim}{Proposition} 
\maketheorem{conj}{proclaim}{Conjecture}
\basetheorem{subproclaim}{Theorem}{proclaim}
\maketheorem{sublemma}{subproclaim}{Lemma}
\newenvironment{sublem}{%
\setcounter{sublemma}{\value{equation}}
\begin{sublemma}}
{\end{sublemma}}
\theoremstyle{bozont-sub}
\maketheorem{subthm}{equation}{Theorem}
\maketheorem{subcor}{equation}{Corollary} 
\maketheorem{subprop}{equation}{Proposition} 
\maketheorem{subconj}{equation}{Conjecture}
\theoremstyle{bozont-named-thm}
\maketheorem{namedthm}{proclaim}{}
\theoremstyle{bozont-sc}
\newtheorem{proclaim-special}[proclaim]{\specialthmname}
\newenvironment{proclaimspecial}[1]
     {\def\specialthmname{#1}\begin{proclaim-special}}
     {\end{proclaim-special}}
\theoremstyle{bozont-subremark}
\basetheorem{subremark}{Remark}{proclaim}
\maketheorem{subrem}{equation}{Remark}
\maketheorem{subnotation}{equation}{Notation} 
\maketheorem{subassume}{equation}{Assumptions} 
\maketheorem{subobs}{equation}{Observation} 
\maketheorem{subexample}{equation}{Example} 
\maketheorem{subex}{equation}{Exercise} 
\maketheorem{inclaim}{equation}{Claim} 
\maketheorem{subquestion}{equation}{Question}
\theoremstyle{bozont-remark}
\basetheorem{remark}{Remark}{proclaim}
\makeremark{subclaim}{subremark}{Claim}
\maketheorem{rem}{proclaim}{Remark}
\maketheorem{rems}{proclaim}{Remarks}   

\maketheorem{claim}{proclaim}{Claim} 
\maketheorem{notation}{proclaim}{Notation} 
\maketheorem{assume}{proclaim}{Assumptions} 
\maketheorem{assumeone}{proclaim}{Assumption} 
\maketheorem{obs}{proclaim}{Observation} 
\maketheorem{example}{proclaim}{Example} 
\maketheorem{examples}{proclaim}{Examples} 
\maketheorem{complem}{equation}{Complement}
\maketheorem{const}{proclaim}{Construction}   
\maketheorem{ex}{proclaim}{Exercise} 
\newtheorem{case}{Case} 
\newtheorem{subcase}{Subcase}   
\newtheorem{step}{Step}
\newtheorem{approach}{Approach}
\maketheorem{Fact}{proclaim}{Fact}
\newtheorem{fact}{Fact}
\newtheorem*{SubHeading*}{\SubHeadingName}%
\newtheorem{SubHeading}[proclaim]{\SubHeadingName}
\newtheorem{sSubHeading}[equation]{\sSubHeadingName}
\newenvironment{demo}[1] {\def\SubHeadingName{#1}\begin{SubHeading}}
  {\end{SubHeading}}%
\newenvironment{subdemo}[1]{\def\sSubHeadingName{#1}\begin{sSubHeading}}
  {\end{sSubHeading}} %
\newenvironment{demor}[1]{\def\SubHeadingName{#1}\begin{SubHeading-r}}
  {\end{SubHeading-r}}%
\newenvironment{subdemor}[1]{\def\sSubHeadingName{#1}\begin{sSubHeading-r}}
  {\end{sSubHeading-r}} %
\newenvironment{demo-r}[1]{%
  \def\SubHeadingName{#1}\begin{SubHeading-r}}
  {\end{SubHeading-r}}%
\newenvironment{subdemo-r}[1]{\def\sSubHeadingName{#1}\begin{sSubHeading-r}}
  {\end{sSubHeading-r}} %
\newenvironment{demo*}[1]{\def\SubHeadingName{#1}\begin{SubHeading*}}
  {\end{SubHeading*}}%
\maketheorem{defini}{proclaim}{Definition}
\maketheorem{defnot}{proclaim}{Definitions and notation}
\maketheorem{question}{proclaim}{Question}
\maketheorem{terminology}{proclaim}{Terminology}
\maketheorem{crit}{proclaim}{Criterion}
\maketheorem{pitfall}{proclaim}{Pitfall}
\maketheorem{addition}{proclaim}{Addition}
\maketheorem{principle}{proclaim}{Principle} 
\maketheorem{condition}{proclaim}{Condition}
\maketheorem{exmp}{proclaim}{Example}
\maketheorem{hint}{proclaim}{Hint}
\maketheorem{exrc}{proclaim}{Exercise}
\maketheorem{prob}{proclaim}{Problem}
\maketheorem{ques}{proclaim}{Question}    
\maketheorem{alg}{proclaim}{Algorithm}
\maketheorem{remk}{proclaim}{Remark}          
\maketheorem{note}{proclaim}{Note}            
\maketheorem{summ}{proclaim}{Summary}         
\maketheorem{notationk}{proclaim}{Notation}   
\maketheorem{warning}{proclaim}{Warning}  
\maketheorem{defn-thm}{proclaim}{Definition--Theorem}  
\maketheorem{convention}{proclaim}{Convention}  
\maketheorem{hw}{proclaim}{Homework}
\maketheorem{hws}{proclaim}{\protect{${\mathbb\star}$}Homework}
\newtheorem*{ack}{Acknowledgment}
\newtheorem*{acks}{Acknowledgments}
\theoremstyle{bozont-number}
\theoremstyle{bozont-def}    
\maketheorem{defn}{proclaim}{Definition}
\maketheorem{subdefn}{equation}{Definition}
\theoremstyle{bozont-reverse}    
\maketheorem{corr}{proclaim}{Corollary} 
\maketheorem{lemr}{proclaim}{Lemma} 
\maketheorem{propr}{proclaim}{Proposition} 
\maketheorem{conjr}{proclaim}{Conjecture}
\theoremstyle{bozont-remark-reverse}
\newtheorem{SubHeading-r}[proclaim]{\SubHeadingName}
\newtheorem{sSubHeading-r}[equation]{\sSubHeadingName}
\newtheorem{SubHeadingr}[proclaim]{\SubHeadingName}
\newtheorem{sSubHeadingr}[equation]{\sSubHeadingName}
\theoremstyle{bozont-reverse-sc}
\newtheorem{proclaimr-special}[proclaim]{\specialthmname}
\newenvironment{proclaimspecialr}[1]%
{\def\specialthmname{#1}\begin{proclaimr-special}}%
{\end{proclaimr-special}}
\theoremstyle{bozont-remark-reverse}
\maketheorem{remr}{proclaim}{Remark}
\maketheorem{subremr}{equation}{Remark}
\maketheorem{notationr}{proclaim}{Notation} 
\maketheorem{assumer}{proclaim}{Assumptions} 
\maketheorem{obsr}{proclaim}{Observation} 
\maketheorem{exampler}{proclaim}{Example} 
\maketheorem{exr}{proclaim}{Exercise} 
\maketheorem{claimr}{proclaim}{Claim} 
\maketheorem{inclaimr}{equation}{Claim} 
\maketheorem{definir}{proclaim}{Definition}
\theoremstyle{bozont-def-newnum-reverse}    
\maketheorem{newnumr}{proclaim}{}
\theoremstyle{bozont-def-newnum-reverse-plain}
\maketheorem{newnumrp}{proclaim}{}
\theoremstyle{bozont-def-reverse}    
\maketheorem{defnr}{proclaim}{Definition}
\maketheorem{questionr}{proclaim}{Question}
\newtheorem{newnumspecial}[proclaim]{\specialnewnumname}
\newenvironment{newnum}[1]{\def\specialnewnumname{#1}\begin{newnumspecial}}{\end{newnumspecial}}
\theoremstyle{bozont-say}    
\newtheorem{say}[proclaim]{}
\theoremstyle{bozont-subsay}    
\newtheorem{subsay}[equation]{}
\theoremstyle{bozont-step}
\newtheorem{bstep}{Step}
\newcounter{thisthm} 
\newcounter{thissection} 
\newcommand{\ilabel}[1]{%
  \newcounter{#1}%
  \setcounter{thissection}{\value{section}}%
  \setcounter{thisthm}{\value{proclaim}}%
  \label{#1}}
\newcommand{\iref}[1]{%
  (\the\value{thissection}.\the\value{thisthm}.\ref{#1})}
\newcounter{lect}
\setcounter{lect}{1}
\newcommand\resetlect{\setcounter{lect}{1}\setcounter{page}{0}}
\newcommand\lecture{\newpage\centerline{\sfbf Lecture \arabic{lect}}
  \addtocounter{lect}{1}}
\newcommand\nnplecture{\hugeskip\centerline{\sfbf Lecture \arabic{lect}}
\addtocounter{lect}{1}}
\newcounter{topic}
\setcounter{topic}{1}
\newenvironment{topic}
{\noindent{\sc Topic 
\arabic{topic}:\ }}{\addtocounter{topic}{1}\par}
\counterwithin{equation}{proclaim}
\counterwithin{enumfulli}{equation}
\counterwithin{figure}{section} 
\newcommand\equinsect{\numberwithin{equation}{section}}
\newcommand\equinthm{\numberwithin{equation}{proclaim}}
\newcommand\figinthm{\numberwithin{figure}{proclaim}}
\newcommand\figinsect{\numberwithin{figure}{section}}
\newenvironment{sequation}{%
\setcounter{equation}{\value{thm}}
\numberwithin{equation}{section}%
\begin{equation}%
}{%
\end{equation}%
\numberwithin{equation}{proclaim}%
\addtocounter{proclaim}{1}%
}
\newcommand{\num}{\arabic{section}.\arabic{proclaim}}
\newenvironment{pf}{\smallskip \noindent {\sc Proof. }}{\qed\smallskip}
\newenvironment{enumerate-p}{
  \begin{enumerate}}
  {\setcounter{equation}{\value{enumi}}\end{enumerate}}
\newenvironment{enumerate-cont}{
  \begin{enumerate}
    {\setcounter{enumi}{\value{equation}}}}
  {\setcounter{equation}{\value{enumi}}
  \end{enumerate}}
\let\lenumi\labelenumi
\newcommand{\rmlabels}{\renewcommand{\labelenumi}{\rm \lenumi}}
\newcommand{\rmlabelsoff}{\renewcommand{\labelenumi}{\lenumi}}
\newenvironment{heading}{\begin{center} \sc}{\end{center}}
\newcommand\subheading[1]{\smallskip\noindent{{\bf #1.}\ }}
\newlength{\swidth}
\setlength{\swidth}{\textwidth}
\addtolength{\swidth}{-,5\parindent}
\newenvironment{narrow}{
  \medskip\noindent\hfill\begin{minipage}{\swidth}}
  {\end{minipage}\medskip}
\newcommand\nospace{\hskip-.45ex}
\newcommand{\sfbf}{\sffamily\bfseries}
\newcommand{\sfbfs}{\sffamily\bfseries\relsize{-.5}
}
\newcommand{\twidle}{\textasciitilde}
\makeatother

\newcounter{stepp}
\setcounter{stepp}{0}
\newcommand{\nextstep}[1]{%
  \addtocounter{stepp}{1}%
  \begin{bstep}%
    {#1}
  \end{bstep}%
  \noindent%
}
\newcommand{\resetsteps}{\setcounter{stepp}{0}}
\newcommand{\spec}{\Spec}
\title[KSB stability in codimension $\geq 3$]%
{KSB stability is automatic in codimension $\geq 3$} 
\author{J\'anos Koll\'ar} \date{\usdate\today}
\thanks{J\'anos Koll\'ar was supported in part by NSF Grants DMS-1901855. \\
  \indent S\'andor Kov\'acs was supported in part by NSF Grant DMS-2100389 and a
  Simons Fellowship.}

\email{kollar@math.princeton.edu}
\address{\vskip-1.75em Department of Mathematics, Princeton University, Fine
  Hall, Washington Road, Princeton, NJ 08544-1000, USA} 
\author{S\'andor J Kov\'acs}
\email{skovacs@uw.edu\xspace}
\address{\vskip-1.75em University of Washington, Department of Mathematics, Box 354350,
Seattle, WA 98195-4350, USA}
%
\maketitle
\setcounter{tocdepth}{1}
\numberwithin{equation}{thm}

\newcommand{\toploccohs}{liftable local cohomology\xspace}
\newcommand{\toploccohsi}{liftable $i^\text{th}\!$ local cohomology\xspace}
\newcommand{\toploccohst}{liftable $i^\text{th}\!$ local cohomology for $i\geq
  n-t$\xspace} 
\newcommand{\toploccohstm}{liftable local cohomology in codimension $t-1$\xspace}


\renewcommand{\o}{\sO}
\newcommand{\Z}{\bZ}
\newcommand{\C}{\bC}

\section{Introduction}

\noin The right framework for a moduli theory of canonical models of varieties of
general type was established in \cite{KSB88}, at least in characteristic $0$ and over Noetherian bases; both of which
we assume from now on. The resulting notion, now called {\it KSB stability,} works
with finitely presented, flat morphisms $g:X\to B$ that satisfy 3 requirements.
\begin{widemize}
\item (Global condition) $\omega_{X/B}$ is relatively ample and ${g}$ is projective,
\item (Fiberwise condition)  the fibers $X_b$ are semi-log-canonical, and
\item (Local stability condition) $\omega_{X/B}^{[m]}$ is flat over $B$ and commutes
  with base changes $B'\to B$ for every $m\in \bZ$.
\end{widemize}
If $g$ satisfies the last two, then it is called \emph{locally KSB
  stable}.
See \cite{ModBook} for a detailed discussion of the resulting moduli theory, especially
\cite[Sec.~6.2]{ModBook}.

Note that the local stability condition is automatic at codimension $1$ points, and
quite well understood at codimension $2$ points, since we have a complete
classification of $2$-dimensional slc singularities; see \cite{KSB88} and
\cite[Sec.~2.2]{ModBook}.
Our aim is to show that local stability is automatic in codimension $\geq 3$.
The simplest version is the following.

\begin{thm}\label{codim.3.simple.thm}
  Let $g:X\to B$ be a flat morphism of finite type over a field of characteristic 0.
  Let $Z\subset X$ be a closed subset such that $\codim (Z_b\subset X_b)\geq 3$ for
  every $b\in B$, and set $U:=X\setminus Z$.

  Assume that the fibers  $X_b$ are semi-log-canonical, and
  $g\resto U: U\to B$ is locally KSB stable. Then $g:X\to B$ is locally KSB stable.
\end{thm}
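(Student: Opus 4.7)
My approach would be to identify $\omega_{X/B}^{[m]}$ as the pushforward $j_*(\omega_{U/B}^{[m]})$ from the locus where local KSB stability is known, and then verify $B$-flatness and base change compatibility for this pushforward using the codimension $\geq 3$ hypothesis together with the $S_2$ property of slc singularities. The candidate is essentially forced on us: on each fiber $X_b$, the reflexive sheaf $\omega_{X_b}^{[m]}$ is $S_2$ (since $X_b$ is slc, hence $S_2$, and reflexive sheaves on $S_2$ schemes are $S_2$), and since $\codim(Z_b \subset X_b) \geq 3 \geq 2$, the restriction-extension map $\omega_{X_b}^{[m]} \to (j_b)_*\omega_{U_b}^{[m]}$ is an isomorphism. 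Thus the fiberwise answer is forced, and the real content of local stability is that the formation of $j_*$ must commute with restriction to each fiber (and, more generally, with arbitrary base change).

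Concretely, I would set $\mathcal{F}_m := j_* \omega_{U/B}^{[m]}$, noting that coherence follows from $\codim Z_b \geq 2$ in each fiber and standard coherence of pushforwards across a codimension $\geq 2$ locus in flat families with $S_2$ fibers. By hypothesis, $\omega_{U/B}^{[m]}$ is $B$-flat and its formation commutes with any $B' \to B$. What must be proven is that (i) $\mathcal{F}_m$ is $B$-flat, and (ii) for every $B' \to B$ the natural map $\mathcal{F}_m \otimes_B \mathcal{O}_{B'} \to j'_* \omega_{U'/B'}^{[m]}$ is an isomorphism. Once (ii) is established, then combined with the fiberwise identification above it yields $(\mathcal{F}_m)_b = \omega_{X_b}^{[m]}$, which is exactly local KSB stability on $X$.

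The heart of the argument is a local cohomology computation in families. After reducing to $B = \Spec A$ with $A$ local Noetherian (or even Artinian, via Noetherian approximation), I would analyze the four term sequence
\[
0 \to H^0_Z(\mathcal{G}) \to \mathcal{G} \to j_*(\mathcal{G}|_U) \to H^1_Z(\mathcal{G}) \to 0
\]
applied to $\mathcal{G} = \omega_{X/B}^{[m]}$ and to its restrictions to all infinitesimal thickenings of a given fiber. The fiberwise $S_2$ property gives $H^i_{Z_b}(\omega_{X_b}^{[m]}) = 0$ for $i \le 1$. The codimension $\geq 3$ hypothesis supplies the additional room in local cohomology needed to control the obstruction groups that appear when one attempts to lift this vanishing to thickenings; combined with the Auslander--Buchsbaum formula and the hull-and-husk machinery developed in \cite{ModBook}, this propagates $B$-flatness and base change compatibility from $U$ across $Z$.

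The main obstacle I anticipate is precisely this passage from fibers to families. Fiberwise $S_2$ alone would be available already under the weaker hypothesis $\codim Z_b \geq 2$, but that hypothesis is famously insufficient: $2$-dimensional slc singularities form a nontrivial moduli problem whose deformation theory is exactly the subtlety that KSB stability is designed to handle. The extra codimension is what buys us the vanishing of the relevant $H^2_{Z_b}$-type obstruction and makes the inductive step on the length of $A$ work. In short, the fiberwise check is essentially formal from the $S_2$ property, whereas the technical content is entirely in the family-level flatness/base change criterion that consumes the codimension hypothesis.
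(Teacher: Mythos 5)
Your structural outline matches the paper in broad strokes --- reduce to an Artinian base, note that $S_2$ of slc fibers forces the fiberwise identification $\omega_{X_b}^{[m]} \cong (j_b)_*\omega_{U_b}^{[m]}$, then shift the real work to propagating flatness and base-change across $Z$ --- but the mechanism you propose for that last step is false, and that is precisely where the theorem is nontrivial. You assert that codimension $\geq 3$ ``buys us the vanishing of the relevant $H^2_{Z_b}$-type obstruction.'' It does not: slc (even log canonical) singularities are $S_2$ but not $S_3$ in general. The affine cone over a polarized abelian surface $(A,L)$ is a $3$-dimensional log canonical singularity whose vertex $z$ has $\codim 3$, yet $H^2_z(\sO_{X_b})$ contains $H^1(A,\sO_A) \neq 0$, and the same nonvanishing holds for $H^2_z(\omega_{X_b}^{[m]})$. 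So the $S_2$-plus-codimension input is nowhere near enough, and neither the hull--husk machinery nor Auslander--Buchsbaum will extract the needed flatness from it; compare the paper's Example~1.8, where non-Du Bois cone singularities over $\pic^d$ of curves give families with $\omega_{X/B}$ flat over $B$ but $\sO_X$ not.

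The ingredient your argument is missing is the theorem that slc singularities are Du Bois. The Du Bois property does \emph{not} yield vanishing of $H^2_{Z_b}$; it yields the strictly weaker ``liftable local cohomology'' property: for every Artinian thickening $R \onto T \cong \sO_{X_b,x}$, the map $H^i_{\frm}(R) \to H^i_{\frn}(T)$ is \emph{surjective}. This surjectivity, not vanishing, is the engine that makes the induction on the length of the Artinian base work, run through a filtration of $\omega_S$ (Theorems~\ref{thm:DB-to-llc} and~\ref{thm:key} of the paper). The codimension hypothesis plays a different role than you imagine: it restricts the degrees requiring control to $i \geq n-1$, the range in which the demi-normalization $\Spec_{X_b}(j_b)_*\sO_{U_b}$ --- which is slc, hence Du Bois, even when $X_b$ itself is not yet known to be so --- computes the same local cohomology as $X_b$. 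Flatness and base change for the full $\omega^{[m]}$ then require the cyclic-cover reduction on top of this.
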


If the fibers $X_b$ are CM, the claim follows from \cite[10.73]{ModBook}.  Being CM
is a deformation invariant property for projective, locally stable families by
\cite{KK10}, see also \cite[2.67]{ModBook}.  In particular, the theorem was known to
hold for varieties in those connected components of the KSB moduli space that contain
a canonical model of a smooth variety.
    
If $B$ is reduced, the theorem is proved in \cite{MR3115196}, see
also \cite[5.6]{ModBook}. Thus it remains to deal with the case when $B=\Spec A$ for
an Artinian ring $A$, which implies the theorem for any $B$.


For applications, and even for the proof of \autoref{codim.3.simple.thm}, we need a
form that strengthens it in 2 significant ways.  First, we deal with pairs
$(X,\Delta=\sum a_iD_i)$, where
$a_i\in \left\{\frac12, \frac23, \frac34, \dots, 1\right\}$ for every $i$; these are
frequently called {\it standard} coefficients.  Second, and this is more important,
we assume $g$ to be flat only in codimension $\leq 2$.



\begin{thm}\label{codim.3.artin.thm}
  Let $g:X\to B$ be a morphism of finite type and of pure relative dimension over a
  field of characteristic 0, and $\Delta=\sum a_iD_i$, where the $D_i$ are relative
  Mumford $\bZ$-divisors.  Let $Z\subset X$ be a closed subset and set
  $U:=X\setminus Z$.  Assume that
  \begin{enumfull}
  \item\label{item:23} $a_i\in \left\{\frac12, \frac23, \frac34, \dots, 1\right\}$
    for every $i$,
  \item\label{item:13} $\codim (Z_b\subset X_b)\geq 3$ for every $b\in B$,
  \item\label{item:14} $g\resto U: U\to B$ is flat and the  fibers
    $(U_b, \Delta|_{U_b})$ are semi-log-canonical,
  \item\label{item:14b}$\omega_{U/B}^{[m]}\bigl(\tsum_i \rdown{ma_i}D_i|_U\bigr)$ is
    flat over $B$ and commutes with base changes for every $m\in \bZ$,
  \item\label{item:15} $\depth_ZX\geq 2$, and
  \item\label{item:16} the normalization $(\ol X_b, \ol C_b+\ol\Delta_b)\to X_b$ is
    log canonical for every $b\in B$, where $\ol C_b$ denotes the conductor of the
    normalization $\ol X_b\to X_b$; see \cite[5.2]{SingBook}.
  \end{enumfull}
  Then
  \begin{enumfull}[resume]
  \item\label{item:14con} $g: X\to B$ is flat,
  \item\label{item:14acon} the fibers $(X_b, \Delta_b)$ are semi-log-canonical, and
  \item\label{item:14bcon} $\omega_{X/B}^{[m]}\bigl(\tsum_i \rdown{ma_i}D_i\bigr)$ is
    flat over $B$ and commutes with base changes for every $m\in \bZ$.
  \end{enumfull}
\end{thm}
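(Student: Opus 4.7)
The plan is to reduce to the case $B=\Spec A$ with $A$ Artinian local, since the authors have already observed that the reduced case is handled by \cite{MR3115196}. For such $A$, I would proceed by induction on the length of $A$, using small extensions $0\to I\to A\to A'\to 0$ with $\fm_A\cdot I=0$ so that $I$ is a $k$-vector space. By induction the conclusions hold over $\Spec A'$, and the task is to lift each of (vii)--(ix) across the extra square-zero ideal $I$.

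The key mechanism is that every sheaf appearing in the statement ought to coincide with the $j_*$-extension of its restriction to $U$, where $j\colon U\hookrightarrow X$ is the inclusion. The depth hypothesis (v) gives $\sO_X=j_*\sO_U$, and the divisorial sheaves $\omega_{X/B}^{[m]}\bigl(\tsum_i\rdown{ma_i}D_i\bigr)$ are, by their very construction for relative Mumford divisors, of the form $j_*$ applied to the corresponding expression on $U$, since they are reflexive along $Z$ by virtue of the depth and codimension hypotheses. The problem thus reduces to the following local assertion: if $F_U$ is an $A$-flat sheaf on $U$ whose formation commutes with arbitrary base change, then $j_*F_U$ is $A$-flat and its formation also commutes with base change.

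To check this I would establish that the natural base-change map
\begin{equation*}
(j_*F_U)\otimes_A k\longrightarrow j_{0*}(F_U\otimes_A k)
\end{equation*}
is an isomorphism, where $j_0\colon U_0\hookrightarrow X_0$ is the central-fiber inclusion. The codimension hypothesis (ii) makes $j_{0*}$ send reflexive sheaves on $U_0$ to reflexive sheaves on $X_0$, and the obstruction to the displayed map being an isomorphism is a local-cohomology term along $Z$ whose vanishing is controlled by (v) and (ii). Combined with $A$-flatness of $F_U$, this feeds into the local criterion of flatness to yield flatness of $j_*F_U$ over $A$. Iterating through the small extensions then establishes (ix); applying the same argument to $F_U=\sO_U$ gives the flatness of $g$, which is (vii).

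Conclusion (viii), that the fibers $(X_b,\Delta_b)$ are semi-log-canonical, follows from hypothesis (vi) combined with the $S_2$ and normal-in-codimension-$1$ properties that $X_b$ inherits from the depth and codimension assumptions once flatness is in hand, via the characterization of slc through the normalization and conductor recalled in \cite[5.2]{SingBook}. The main obstacle I anticipate is verifying the local-cohomology computation \emph{uniformly in $m$}: the floors $\rdown{ma_i}$ jump with $m$ and one must check that the obstruction vanishes for every twist simultaneously, not just for a fixed $m$. The standard-coefficient hypothesis (i) bounds the combinatorial complexity of the denominators and should make this tractable, but it is precisely where the substance of the argument concentrates.
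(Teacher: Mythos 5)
Your reduction to Artinian $B$ and the observation that the divisorial sheaves are $j_*$-pushforwards from $U$ are fine, but the central claim — that $j_*F_U$ is $A$-flat and commutes with base change whenever $F_U$ is, given only \sfref{item:13} and \sfref{item:15} — is false, and the paper contains a counterexample. In the example following \autoref{thm:main-artin}, the universal cone $X_B = C_a(Y\times B, P)$ over $Y=\prod_i C_i$, $B=\prod_i\pic^{d_i}(C_i)$, has punctured cone $U_B$ that is a $\bG_m$-bundle over $Y\times B$, so $\sO_{U_B}$ is flat over $B$ and commutes with base change; the vertex section $Z$ has codimension $\dim Y+1\geq 3$ in the fibers; and since $P$ is relatively ample, $j_*\sO_{U_B}=\sO_{X_B}$, whence $\depth_Z X_B\geq 2$. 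Yet $\sO_{X_B}$ is \emph{not} flat over $B$, because $h^0(Y, P_b)$ jumps. The obstruction in your base-change diagram is not killed by \sfref{item:13} and \sfref{item:15} alone: those conditions control $H^0_Z$ and $H^1_Z$ on the \emph{total space}, which is exactly what gives $\sO_X=j_*\sO_U$, whereas the base-change map is governed by local cohomology of the fiber and of $\sO_X\lotimes_A k$, and that is a genuinely separate question.

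The missing ingredient is \sfref{item:16} — the slc, hence Du~Bois, property of the demi-normalization of the fiber — which your argument never invokes. It is the entire content of \autoref{thm:main-artin}: the Du~Bois property of the $S_2$-hull of the fiber yields the liftable local cohomology condition (\autoref{thm:DB-to-llc}), and that condition forces $\omega_{X/B}$ — not $\sO_X$, and not arbitrary pushforwards $j_*F_U$ — to be flat with base change (\autoref{thm:key}). Even then, flatness of $X\to B$ is not read off from \autoref{thm:main-artin} directly; rather, Case~I of the proof produces a generating section of $\omega_{\widetilde X_b}$ via \autoref{lem.1.1.jk} and uses \sfref{item:15} to promote $\sO_X\to\omega_{X/B}$ to an isomorphism, while Case~III reduces the general pair case to this by cyclic covers along the $D_i$. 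Your proposal collapses precisely the step the example shows cannot be collapsed, and also sidesteps the cyclic-cover bookkeeping that handles the coefficient combinatorics you flag at the end.
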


\begin{rems}\label{codim.3.artin.thm.refs}\
  \begin{enumfull}
  \item As in \cite[4.68]{ModBook}, $D$ is a \emph{relative Mumford divisor} if
    at every generic point of $X_b\cap D$, the fiber $X_b$ is smooth and $D$ is
    Cartier.
  \item The notation $\depth_ZX$ stands for
    $\depth_Z\sO_X\leteq\inf\{\depth_z\sO_X\skvert z\in Z\}$. This terminology is
    used, for instance, in \cite[(5.10.1)]{EGAIV2} and \cite[10.3]{ModBook}. 
  \item The condition \autoref{item:15} is easy to ensure
    by replacing $\sO_X$ with the push-forward of $\sO_{U}$ if necessary.  If $B$ is
    $S_2$ then \autoref{item:15} holds iff $X$ is $S_2$.
  \item Assumption 
    \autoref{item:16} is a weakening of the fiberwise condition; the two are
    equivalent iff $X_b$ is $S_2$.
    In many applications, including the proof of
    \autoref{codim.3.artin.thm},
    at the beginning we only know
    \autoref{item:16}, but eventually conclude that $(X_b,\Delta_b)$ is slc.
  \item\label{item:1.3.4.jk} The following may be a better way of formulating
    \autoref{item:16}.  Let $j:U\into X$ be the natural embedding and set
    $\wt X_b:=\spec_{X_b}j_*\sO_{U_b}$, which is the {\it demi-normalization} and
    also the {\it$S_2$-hull} of the fiber $X_b$; see \cite[Sec.5.1]{SingBook} and
    \cite[Sec.9.1]{ModBook}.  Then $\wt X_b\to X_b$ is a universal homeomorphism that
    is an isomorphism over $U_b$. Now \autoref{item:16} holds iff  the induced pair
    $\bigl(\wt X_b, \wt \Delta_b\bigr)$ is slc.
  \item If $a_i\in \left\{\frac23, \frac34, \dots\right\}$, then \autoref{item:14b} is the same as the main assumption of KSB
    stabilty with standard coefficients as defined in \cite[6.21.3]{ModBook}.

    If we allow $a_i=\frac12$, then the above definition treats the pairs $(X, D)$,
    $(X, \frac12 D+\frac12 D)$ and $(X, \frac12 (2D))$ as different objects.  Note
    that $\omega_X(\sum\rdown{a_i}D_i)$ is $\omega_X(D)$ in the first case but
    $\omega_X$ in the other 2 cases.
Thus,    replacing  $1\cdot D_i$ with $\frac12 D_i+\frac12 D_i$
ensures the extra condition on the $\{D_i\colon a_i=1\}$ in  \cite[6.22.3]{ModBook}.

    This way of handling the coefficient $\frac12$ case may not be natural from the point of view of
    moduli, but seems necessary;
    see \cite[Secs.~8.1--2]{ModBook} for a discussion of the general notion
    of such \emph{marked pairs.}
\item The definition of  KSB
  stabilty with standard coefficients also requires the $D_i$ to be flat
  by \cite[6.21.1]{ModBook}. We do not know whether this is automatic in codimensions $\geq 3$, see \autoref{4.3.cor} for a special case.

    \item  We comment on other versions of stability in \autoref{sec.ksba.jk}.
  
  \end{enumfull}
\end{rems}

\begin{say}[Sketch of an approach to \autoref{codim.3.artin.thm}]
  \label{codim.3.artin.thm.sketch}
  Assume for simplicity that we are over $\bC$, $B=\Spec A$ for an Artinian ring $A$,
  and the closed fiber $X_k$ is projective.  As in \cite{KK10} the proof relies on
  the Du~Bois property (see \autoref{rem:DB}) of slc varieties, which implies that
  the natural maps
  \begin{equation}
    \label{eq:33}\qquad\qquad
    H^i(X_k^{\rm an}, \bC)\onto H^i(X_k^{\rm an}, \sO_{X_k^{\rm an}})
    \qquad\qtq{are surjective.}
  \end{equation}
  If $g$ is also flat, these imply that the $R^ig_*\sO_X$ are (locally) free by
  \cite{MR0376678}.  Using this for various cyclic covers, \cite[2.68]{ModBook} shows
  that $\omega_{X/B}$ is flat over $B$ and commutes with base changes $B'\to B$.
%

  An inspection of these proofs shows that, in order to get the flatness of
  $\omega_{X/B}$, we need \autoref{eq:33} only for $i=n,n{-}1$ where $n:=\dim
  X_k$. This is where the codimension $3$ condition enters first.
  As we noted in \autoref{item:1.3.4.jk}, the 
  {\it
    demi-normalization} $\wt X_k$ of $X_k$ is slc, and
  $\wt X_k\to X_k$  is a universal homeomorphism that is 
 an isomorphism over
 $U_k$.  Thus
 \[
 \begin{array}{llll}
 H^i(X_k^{\rm an}, \bC)&\simeq & H^i(\wt X_k^{\rm an}, \bC) &
 \mbox{for every $i$, and}\\
 H^i(X_k^{\rm an}, \sO_{X_k^{\rm an}}) &\simeq & H^i(\wt X_k^{\rm an}, \sO_{\wt X_k^{\rm an}})
 &
 \mbox{for $i=n,n{-}1$.}
 \end{array}
 \]
 It follows that \autoref{eq:33} holds for $i=n,n{-}1$, although $X_k$ is not (yet
 known to be) Du~Bois, see also \autoref{thm:DB-to-llc}.
 One also sees that it is enough if $g$ is flat at points of dimension $\geq
 n-2$.
 Therefore we get that $\omega_{X/B}$ is flat over $B$.
\end{say}
  
 Interestingly, this approach does not seem to imply that $X$ is flat over $B$, much
 less the full \autoref{codim.3.artin.thm}.  A possible explanation is that
 $\omega_X$ is insensitive to codimension 2:
 \begin{lem}\label{lem.1.1.jk}
   Let $\pi:Y\to X$ be a quasi-finite morphism that is an isomorphism at points of
   codimension $\leq 1$.  Then $\pi_*\omega_Y\simeq \omega_X$.
 \end{lem}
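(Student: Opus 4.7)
The statement is local on $X$, so we may assume $X$ is noetherian, and set $n=\dim X=\dim Y$. Let $U\subseteq X$ be the maximal open subset on which $\pi$ is an isomorphism; by hypothesis $Z:=X\setminus U$ satisfies $\codim_X Z\geq 2$. Setting $V:=\pi^{-1}(U)$, the quasi-finiteness of $\pi$ gives $\dim(Y\setminus V)\leq\dim Z\leq n-2$, so $\codim_Y(Y\setminus V)\geq 2$, and immediately $(\pi_*\omega_Y)|_U=(\pi|_V)_*\omega_V=\omega_U=\omega_X|_U$.

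The plan is to show that both $\omega_X$ and $\pi_*\omega_Y$ are coherent $S_2$-sheaves on $X$. Once this is in place, the agreement on $U$ together with the standard identity $\sF=j_*(\sF|_U)$ for an $S_2$-sheaf $\sF$ (with $j\colon U\hookrightarrow X$ the inclusion and $X\setminus U$ of codimension $\geq 2$) forces $\pi_*\omega_Y\simeq\omega_X$.

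That $\omega_X$ is $S_2$ is a well-known property of the dualizing sheaf on an excellent equidimensional noetherian scheme admitting a dualizing complex. For $\pi_*\omega_Y$, first reduce to the finite case via Zariski's Main Theorem: factor $\pi=\ol\pi\circ j'$ with $j'\colon Y\hookrightarrow\ol Y$ an open immersion and $\ol\pi\colon\ol Y\to X$ finite, choosing $\ol Y$ as the scheme-theoretic closure of $j'(Y)$ inside an initial ZMT compactification so that $\ol\pi$ remains an isomorphism over $U$; in particular $\codim_{\ol Y}(\ol Y\setminus Y)\geq 2$. The $S_2$-property of $\omega_{\ol Y}$ then yields $j'_*\omega_Y=\omega_{\ol Y}$, whence $\pi_*\omega_Y=\ol\pi_*\omega_{\ol Y}$. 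Because a finite morphism preserves both the depth of a coherent sheaf and the dimension of its support, $\ol\pi_*\omega_{\ol Y}$ is $S_2$ on $X$.

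The main technical point is the careful choice in the ZMT factorization, to ensure that $\ol\pi$ is an isomorphism over the codimension $\leq 1$ locus of $X$; once this is arranged, the rest follows from standard properties of dualizing sheaves and their compatibility with open restriction and finite pushforward.
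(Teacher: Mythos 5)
Your argument is correct, but it takes a genuinely longer route than the paper's. Both proofs start from the same open set $U$ where $\pi$ is an isomorphism, and from the observation that $\codim(X\setminus U,X)\geq 2$ and $\codim(Y\setminus V,Y)\geq 2$, where $V=\pi^{-1}(U)$. The paper then simply notes that $\omega_Y$ is \emph{itself} an $S_2$-sheaf on $Y$, so $\omega_Y\simeq\jmath_*\omega_V$ (with $\jmath\colon V\into Y$), and concludes at once from the chain $\pi_*\omega_Y\simeq\pi_*\jmath_*\omega_V\simeq\imath_*\pi'_*\omega_V\simeq\imath_*\omega_U\simeq\omega_X$, invoking the $S_2$-property of $\omega_X$ only in the last step. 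In particular, the paper never needs to know that $\pi_*\omega_Y$ is $S_2$ on $X$ --- which is exactly the fact that pushes you into Zariski's Main Theorem. Your ZMT detour does go through: since $V\to U$ is an isomorphism, $V$ is proper over $U$, hence closed as well as open and dense in $\ol\pi^{-1}(U)$, so $\ol\pi^{-1}(U)=V$ automatically once one takes the schematic closure of $Y$; then $\ol Y\setminus Y\subseteq\ol\pi^{-1}(X\setminus U)$ has codimension $\geq 2$ by finiteness of $\ol\pi$, and your reasoning applies (so the ``careful choice'' you flag is in fact automatic, not an extra technical point). But you pay for the detour: you must additionally invoke that finite pushforward preserves the $S_2$-property, and manage the compactification. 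Using the $S_2$-property of $\omega_Y$ on $Y$ itself, before ever pushing forward, sidesteps all of that.
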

 \begin{proof}
   Let $\imath: U\into X$ be the largest open subset such that
   $\pi'\leteq\pi\resto {\pi^{-1}U}$ is an isomorphism between $\pi^{-1}U$ and $U$.
   Let $\jmath: \pi^{-1}U\into Y$ denote the embedding.  By assumption
   $\codim(Y\setminus \pi^{-1}U,Y)\geq 2$ and $\codim(X\setminus U,X)\geq
   2$. Therefore, because $\omega_X$ and $\omega_Y$ are $S_2$-sheaves
   (cf.~\cite[5.69]{KM98}), it follows that
     \[
       \pi_*\omega_Y\simeq \pi_*\jmath_*\omega_{\pi^{-1}U} \simeq
       \imath_*\pi'_*\omega_{\pi^{-1}U} \simeq \imath_*\omega_U \simeq
       \omega_X. \qedhere
     \]
 \end{proof}


 In order to prove \autoref{codim.3.artin.thm}, we use the techniques of \cite{KK20},
 and establish the following local, \DB version (see \autoref{rem:DB}).

\begin{thm}
  \label{thm:main-artin}%
  Let $B$ be a  local scheme  over a 
  field of characteristic $0$, 
  and $f:(X,x)\to B$  a  local
  morphism  that is   essentially of finite type.
  Let $X_{k}$ be the fiber of $f$ over the closed point of $B$,
    $Z\subseteq X_{k}$  a closed subset of codimension $\geq 3$,
  and set $\jmath:U_{k}\leteq X_{k}\setminus Z\into X_{k}$. Assume that
  \begin{enumfull}
  \item $f$ is flat along $U_{k}$, and
  \item $\Spec \jmath_*\sO_{U_{k}}$ is \DB.
  \end{enumfull}
  Then 
  $\omega_{X/B}$ is flat over $B$ and commutes with arbitrary base
  change. 
\end{thm}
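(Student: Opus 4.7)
The plan is to adapt the strategy of \cite{KK10,KK20}, replacing the use of global cohomology on a projective fiber by its local-cohomology analog at $x$. By standard Noetherian/Artinian approximation we may assume $B=\Spec A$ for a local Artinian $\bC$-algebra with residue field $k$, and we check the conclusion one infinitesimal step at a time. Extend $Z$ to a closed subset $\ol Z\subset X$ with $\ol Z\cap X_k=Z$ over which $U:=X\setminus\ol Z\to B$ is flat, and set $\omega_{X/B}:=\jmath_*\omega_{U/B}$ for $\jmath\colon U\into X$; this is coherent because $\omega$ is $S_2$. Flatness of $\omega_{X/B}$ over $A$ together with compatibility with base change will then be proved by showing, via local (Grothendieck) duality, that the $A$-modules $H^i_x(X,\omega^{\kdot}_{X/B})$ are free of the expected rank and commute with $A\to k$ in the relevant range of $i$.

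The input from Hodge theory is the following. Because the $S_2$-hull $\wt X_k=\Spec\jmath_*\sO_{U_k}$ is assumed to be \DB, the local form of the Deligne--Du~Bois surjection yields
\[
H^i_x\bigl(\wt X_k^{\an},\bC\bigr) \onto H^i_x\bigl(\wt X_k^{\an},\sO_{\wt X_k^{\an}}\bigr).
\]
Since the universal homeomorphism $\wt X_k\to X_k$ is an isomorphism outside a set of codimension $\geq 3$, and since $\omega$ is insensitive to codimension $\geq 2$ modifications by \autoref{lem.1.1.jk}, these surjections together with duality control exactly the $\sO_{X_k}$-modules that govern $\omega_{X/B}\otimes_A k$. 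Following the liftable-local-cohomology technique of \cite{KK20}, the $\bC$-classes produced above lift across each infinitesimal thickening $\Spec A/\fm_A^n$, forcing the corresponding local cohomology $A$-modules to be free and base-change compatible; by local duality this translates into the $A$-flatness of $\omega_{X/B}$ and its compatibility with arbitrary base change.

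The main obstacle lies in the passage between $X_k$ and its $S_2$-hull $\wt X_k$: $X_k$ itself is not assumed to be \DB, so the Deligne--Du~Bois surjection is available only on $\wt X_k$. One must check carefully that the discrepancy, supported in codimension $\geq 3$ on $X_k$, is invisible to the specific local cohomology modules that control $\omega_{X/B}$. This is exactly where both the codimension $\geq 3$ hypothesis and the restriction to $\omega$ (rather than $\sO_X$, which sees codimension $2$) are essential, and it is the step which must be verified most carefully against the liftability machinery of \cite{KK20}.
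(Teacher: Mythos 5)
Your overall architecture---reduce to Artinian $B$, use the Du~Bois property of the $S_2$-hull $\wt X_k$, exploit that codimension~$\geq 3$ lets you pass between $X_k$ and $\wt X_k$ without changing $\omega$---matches the paper's strategy. But the Hodge-theoretic input you invoke is false, and the paper explicitly warns against this mistake in \autoref{rem:DB}.

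You claim a ``local form of the Deligne--Du~Bois surjection''
\[
H^i_x\bigl(\wt X_k^{\rm an},\bC\bigr)\onto H^i_x\bigl(\wt X_k^{\rm an},\sO_{\wt X_k^{\rm an}}\bigr).
\]
No such surjection holds. As remarked in \autoref{eq:33.1.jk}, this map fails already for $X$ smooth of dimension $n\geq 2$ and $i=n$: the source $H^n_x(X^{\rm an},\bC)$ vanishes (a punctured polydisk is homotopy equivalent to $S^{2n-1}$, and $n-1<2n-1$), while the target $H^n_x(X^{\rm an},\sO_{X^{\rm an}})$ is infinite-dimensional. The global surjection $H^i(X^{\rm an},\bC)\onto H^i(X^{\rm an},\sO_{X^{\rm an}})$ for proper \DB $X$ comes from the composition $\bC_{X^{\rm an}}\xrightarrow{\sigma}\sO_{X^{\rm an}}\xrightarrow{\varrho}\Om^0_{X^{\rm an}}$, where the constant-sheaf part $\sigma$ is precisely what does \emph{not} localize. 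Only the coherent part $\varrho$ localizes, and the correct local replacement of the \DB condition is the quasi-isomorphism of $\varrho$, i.e.\ the algebraic isomorphisms $H^i_x(X,\sO_X)\simeq\bH^i_x(X,\Om_X^0)$. This is what the paper packages as ``liftable local cohomology'' (\autoref{def:liftable-cohom}), established via \autoref{lem:Matlis} and \autoref{thm:DB-to-llc}, and then fed into the Artinian deformation machinery of \autoref{thm:key}. Your proposal also gestures at ``lifting $\bC$-classes across infinitesimal thickenings,'' but that makes no sense at the analytic level since all thickenings $X_{k}\into X_{A/\fm^n}$ have the same underlying analytic topological space; the lifting has to be a purely algebraic local-cohomology statement, as in \cite{KK20}. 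Until you replace the false topological surjection with the correct algebraic $\sO_X\to\Om^0_X$ mechanism, the central step of your argument does not go through.
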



\autoref{thm:main-artin} will be proved as a combination of \autoref{thm:key} and
\autoref{thm:DB-to-llc}. 

As before, the method does not seem to imply that $X$ is flat over $\Spec S$, not
even if we assume that $\depth_ZX\geq 2$ as in \autoref{item:15}.  However, we do not
have a counterexample.

Note that, without the \DB assumption, such examples are easy to get:

\begin{example} 
  %
  %
  %
  Let $\{C_i:i\in I\}$ be a finite set of smooth, projective curves.  Fix $d_i>0$
  such that $d_i\leq \deg \omega_{C_i}$ for some $i\in I$, and
  $d_j>\deg \omega_{C_j}$ for some $j\in I$.  Set $Y:=\times_i C_i$ and consider a
  line bundle $L=\boxtimes_i L_i$ on $Y$, where $\deg L_i=d_i$.

  The affine cone over $Y$ with conormal bundle $L$ (cf.\cite[3.8]{SingBook}) is
  \[
  C_a(Y, L):=\spec_k \oplus_{m\in \bZ} H^0(Y, L^m).
  \]
  By the $i=0$ case of \cite[3.13.2]{SingBook} its dualizing sheaf is the
  sheafification of the module
  \[
  \oplus_{m\in \bZ} H^0(Y, \omega_Y\otimes L^m).
  \]
  The $m$th graded pieces are
  \[
  \otimes_{i\in I} H^0(C_i, L_i^m) \qtq{and}
  \otimes_{i\in I} H^0(C_i, \omega_{C_i}\otimes L_i^m).
  \]
  Note that if $d_i\leq  \deg \omega_{C_i}$ then $h^0(C_i, L_i)$ depends on
the choice of $L_i$, not only on $\deg L_i$.

  By contrast, we claim that  $h^0(Y, \omega_Y\otimes L^m)$ depends only on
the degrees of the $L_i$ and $m$.
  Indeed, if $m\leq -1$ then $\omega_{C_j}\otimes L_j^m$ has negative
degree, so
  $H^0(Y, \omega_Y\otimes L^m)=0$.
  If $m=0$ then there is no dependence on the $L_i$, and for $m\geq 1$
  \[
  h^0(C_i, \omega_{C_i}\otimes L_i^m)=m\deg L_i+g(C_i)-1.
  \]

  Now set $B:= \times_i \pic^{d_i}(C_i)$ and note that
  $Y\times B\simeq \times_i\left(C_i\times\pic^{d_i}(C_i)\right)$.  Let $P_i$ denote
  the universal degree $d_i$ line bundle on $C_i\times\pic^{d_i}(C_i)$ and let
  $P=\boxtimes P_i$ on $Y\times B$. Further let $\pi:Y\times B\to B$ be the
  projection, and consider the universal cone
   \[
     X_B:=C_a(Y\times B, P):=\spec_B \oplus_{m\geq 0} \pi_*P^m
   \]
   over $B$.  As we noted, the $h^0\bigl(Y, \omega_Y\otimes P_b^m\bigr)$ are
   independent of $b\in B$, so the dualizing sheaf of $X_B$ is flat over $B$.
   However, $h^0\bigl(Y, P_b\bigr)$ does depend on $b\in B$, thus the structure sheaf
   is not flat over $B$.
   Note that $h^1\bigl(Y, P_b\bigr)$ also depends on $b\in B$, and when
   $h^1\bigl(Y, P_b\bigr)\neq 0$, then $C_a(Y,P_b)$, the normalization of the fiber
   of $X_B$ over $b$, is not \DB by \cite[2.5]{GK14}.
\end{example}

We also prove that KSB stability is automatic in codimension $3$ in a different
manner, namely that it is enough to check it on general hyperplane sections.

\begin{cor}\label{codim.3.artin.thm.c}
  Let $g:X\to B$ be a quasi-projective morphism of pure relative dimension $n\geq 3$
  over a field of characteristic 0, and $\Delta=\sum a_iD_i$, where the $D_i$ are
  relative Mumford $\bZ$-divisors.  Assume that
  \begin{enumfull}
  \item\label{item:23.c} $a_i\in \left\{\frac12, \frac23, \frac34, \dots, 1\right\}$
    for every $i$,
  \item\label{item:15.c} $\depth_xX\geq \min\bigl\{2, \codim (x, g^{-1}(g(x))\bigr\}$
    for every $x\in X$,
  \item\label{item:16.c} the normalization $(\ol X_b, \ol C_b+\ol\Delta_b)\to X_b$ is
    log canonical for every $b\in B$, and
  \item\label{item:16.c} general relative surface sections of $(X, \Delta)\to B$ are
    locally KSB stable.
\end{enumfull}
Then  \sfref{item:14con}--\sfref{item:14bcon} hold.
\end{cor}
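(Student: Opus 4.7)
The plan is to reduce the statement to \autoref{codim.3.artin.thm}. Let $Z \subseteq X$ denote the closure of the set of points $x$ with $\codim\bigl(x, X_{g(x)}\bigr) \geq 3$, and set $U := X \setminus Z$; then $\codim(Z_b \subseteq X_b) \geq 3$ for every $b \in B$. Hypotheses \autoref{item:23}, \autoref{item:13}, \autoref{item:15}, and \autoref{item:16} of the main theorem are then immediate from the standard coefficient, depth, and normalization assumptions of the corollary. So the task is to verify hypotheses \autoref{item:14} and \autoref{item:14b} on $U$, namely that $g\resto U \colon U \to B$ is flat with slc fibers and that $\omega_{U/B}^{[m]}\bigl(\tsum_i \rdown{ma_i} D_i|_U\bigr)$ is flat over $B$ and commutes with base change for every $m \in \bZ$.

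To establish these at a point $u \in U$, set $c := \codim\bigl(u, X_{g(u)}\bigr) \leq 2$. The relative quasi-projectivity of $g$ combined with the surface-section hypothesis allows us, via a Bertini-type argument on the space of relative hyperplanes through $u$, to choose (after replacing $X$ by a relatively affine neighborhood of $u$) a system of $n-2$ general relative hyperplane sections $h_1, \ldots, h_{n-2}$ vanishing at $u$ whose common zero locus $S := V(h_1, \ldots, h_{n-2})$ passes through $u$, is transverse to the fiber $X_{g(u)}$ near $u$, and such that $(S, \Delta|_S) \to B$ is locally KSB stable.

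I would then proceed in three steps. First, flatness of $g$ at $u$: generality of the $h_i$ ensures that they form a regular sequence on the fiber $X_{g(u)}$ near $u$, and the depth condition $\depth_u X \geq \min\{2,c\}$ combined with the local criterion of flatness upgrades flatness of $g\resto S$ at $u$ to flatness of $g$ at $u$. Second, inversion of adjunction applied to the slc pair $(S_{g(u)}, \Delta|_{S_{g(u)}})$, together with the corollary's normalization hypothesis, shows that $\bigl(X_{g(u)}, \Delta|_{X_{g(u)}}\bigr)$ is slc at $u$; by \autoref{codim.3.artin.thm.refs} this equivalently says that the demi-normalization is slc.

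The main obstacle is the third step: descending flatness and base-change of the reflexive twisted dualizing sheaves from $S$ to $X$. Here the strategy is an inductive adjunction argument in which one cuts by a single general hyperplane at a time, with each intermediate subscheme still satisfying the depth and codimension hypotheses. Successive residue/adjunction isomorphisms relate $\omega_{X/B}^{[m]}\bigl(\tsum_i \rdown{ma_i} D_i\bigr)$ to its restrictions down to $S$, while the flatness established in the first step ensures that these isomorphisms are compatible with base change at each stage. Combining these with the flatness and base-change compatibility of $\omega_{S/B}^{[m]}\bigl(\tsum_i \rdown{ma_i} D_i|_S\bigr)$ furnished by the local KSB stability of $S$, we propagate the desired property to a neighborhood of $u$ in $X$. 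Once \autoref{item:14} and \autoref{item:14b} hold on $U$, the main theorem \autoref{codim.3.artin.thm} yields \autoref{item:14con}, \autoref{item:14acon}, and \autoref{item:14bcon} for $g \colon X \to B$, completing the proof.
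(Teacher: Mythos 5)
Your high-level plan --- reduce to \autoref{codim.3.artin.thm} --- matches the paper, but both legs of the reduction break. First, the $Z$ you define is all of $X$: for a fiber of pure dimension $n\geq 3$, the closed points have codimension $n\geq 3$ and are dense, so the closure of the codimension-$\geq 3$ locus is the whole fiber, giving $U=\emptyset$. Second, and more seriously, the pointwise Bertini step is not available: you cannot pass a general surface section $S$ through a fixed point $u$ with $\codim(u,X_{g(u)})\leq 2$. Such a $u$ has $\dim\overline{\{u\}}\geq n-2\geq 1$ in its fiber, and $u\in S$ forces $\overline{\{u\}}\subseteq S$; since $\dim S_{g(u)}=2$, this is outright impossible when $n\geq 5$, and for $n\in\{3,4\}$ it pins $S$ down to a surface constrained to contain $\overline{\{u\}}$, which is not a \emph{general} surface section of $(X,\Delta)\to B$ and hence is not covered by the hypothesis. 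Your regular-sequence flatness lifting, inversion of adjunction, and base-change descent all hinge on having this $S$ through $u$, so the argument does not get off the ground.

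The paper's proof is global rather than pointwise and never chooses $S$ through a prescribed point. After reducing to Artinian $B$ by \cite[9.17]{ModBook}, it notes that $\omega_{X/B}^{[m]}\bigl(\tsum_i\rdown{ma_i}D_i\bigr)$ is $S_2$ and stays $S_2$ after twisting by line bundles and restricting to a general surface section $Y=H_1\cap\cdots\cap H_{n-2}$ (\cite[10.18]{ModBook}), so the adjunction isomorphism $\omega_{Y/B}^{[m]}\bigl(\tsum_i\rdown{ma_i}D_i|_Y\bigr)\simeq\omega_{X/B}^{[m]}\bigl(\tsum_jH_j+\tsum_i\rdown{ma_i}D_i\bigr)|_Y$ holds. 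The local KSB stability of $Y$ makes the left side flat over $B$, and then the local criterion of flatness (\cite[p.177]{MR1011461} or \cite[10.56]{ModBook}) shows $\omega_{X/B}^{[m]}\bigl(\tsum_i\rdown{ma_i}D_i\bigr)$ is flat over $B$ outside a subset of fiberwise codimension $\geq 3$: the open flat locus contains a general $Y$, so its complement meets each fiber in small dimension. \autoref{codim.3.artin.thm} then upgrades this to flatness everywhere, and over Artin rings flat modules are free, so base-change compatibility follows.
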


\begin{proof} By \cite[9.17]{ModBook} we may assume that $B$ is Artinian.  Then the
  relative pluricanonical sheaves
  $\omega_{X/B}^{[m]}\bigl(\tsum_i \rdown{ma_i}D_i\bigr)$ are $S_2$. This continues
  to hold after first tensoring with line bundles and then restricting to general
  surface sections $Y:=H_1\cap\cdots H_{n-2}\subset X$; for the latter see
  \cite[10.18]{ModBook}. Thus
  \[
  \omega_{Y/B}^{[m]}\bigl(\tsum_i \rdown{ma_i}D_i|_Y\bigr)\simeq 
  \omega_{X/B}^{[m]}\bigl(\tsum_j H_j+\tsum_i \rdown{ma_i}D_i\bigr)|_Y.
  \]
  Now by \cite[p.177]{MR1011461} or \cite[10.56]{ModBook}, the
  $\omega_{X/B}^{[m]}\bigl(\tsum_i \rdown{ma_i}D_i\bigr)$ are flat over $B$ outside a
  subset of codimesion $\geq 3$. Thus they are flat everywhere by
  \autoref{codim.3.artin.thm}.
  Over Artin rings,  flat modules are free \cite[\href{https://stacks.math.columbia.edu/tag/051G}{Tag 051G}]{stacks-project}, so  commuting with base change holds; see also \cite[9.17]{ModBook}.
\end{proof}



\begin{rem}\label{rem:DB}
  The precise definition of \DB\sings, introduced by Steenbrink \cite{Steenbrink83},
  is quite involved. It starts with the construction of the Du~Bois complex, see
  \cite{DuBois81,GNPP88}, which has a natural filtration and agrees with the usual
  de~Rham complex if $X$ is nonsingular.  For our purposes the important part is the
  $0^\text{th}$ associated graded Du~Bois complex of $X$, which is denoted by
  $\dbcx X$. This comes with a natural morphism $\sO_X\to\dbcx X$, and a separated
  scheme of finite type over $\bC$ is said to have \DB\sings if this natural morphism
  is a quasi-isomorphism.
  For more details on the definition of \DB\sings and their relevance to higher
  dimensional geometry see \cite[Chap.6]{SingBook}.


  As we already mentioned in \autoref{eq:33},
  for a proper complex variety $X$ with \DB singularities, the natural morphism
  \begin{equation}
    \label{eq:star}
    \xymatrix{%
      H^i(X^{\rm an},\bC)\ar@{->>}[r] & H^i(X^{\rm an},\sO_{X^{\rm an}})
    }
  \end{equation}
  is surjective. (At least heuristically, one may think of \DB singularities as the
  largest class for which this holds, cf.~\cite{Kovacs11d}.)

  The surjectivity in \autoref{eq:star} enables one to use topological arguments to
  control the sheaf cohomology groups $H^i(X,\sO_X)$.
  It is a key element of Kodaira type vanishing theorems \cite{Kollar87b},
  \cite[Sec.12]{Kollar95s},\cite{Kovacs00c},\cite{MR2646306} and leads to various
  results on deformations of \DB schemes \cite{MR0376678,KK10,KS13}.

  The obvious candidate for a local analog of \autoref{eq:33} is the map on local
  cohomologies
  \begin{equation}
    \label{eq:33.1.jk}
    H^i_x(X^{\rm an}, \bC)\to H^i_x(X^{\rm an}, \sO_{X^{\rm an}}).
  \end{equation}
  However, this map is never surjective for $i=\dim X$. In fact, if $X$ is smooth of
  dimension $n\geq 2$, then $H^n_x(X^{\rm an}, \bC)$ is trivial, but
  $H^n_x(X^{\rm an}, \sO_{X^{\rm an}})$ is infinite dimensional.


  To get the right notion, one should look at the natural morphisms
  \begin{equation}
    \label{eq:34}
    \xymatrix{%
      \bC_{X^{\rm an}} \ar[r]^-\sigma & \sO_{X^{\rm an}} \ar[r]^\varrho & \Om^0_{X^{\rm an}}
    }
  \end{equation}
  The general theory implies that the composition $\varrho\circ\sigma$ induces
  surjectivity on  (hyper)cohomology
  for any proper $X$. If $X$ has \DB\sings, then $\varrho$ is
  a quasi-isomorphism, and  the surjectivity in \autoref{eq:star} follows.


  Note that $\varrho$ may be represented by a map between coherent sheaves, thus it
  is possible to work with $\varrho$ entirely algebraically.  Eventually, this
  suggests that the correct local replacement of \autoref{eq:33} is the (a priori
  stronger) quasi-isomorphism of $\varrho$; see also \cite[Lemma~2.2]{Kovacs99}. This
  turns out to be equivalent to the local \DB isomorphisms
  \begin{equation}
    \label{eq:33.2.jk}
    H^i_x(X,\sO_X)\simeq \bH^i_x(X,\Om_X^0)\qtq{for  $i\in \bN$ and $x\in X$.}
  \end{equation}
  At the end this leads to the local cohomology lifting property, the key technical
  ingredient in \cite{KK20}; see \autoref{def:liftable-cohom}.
\end{rem}

\begin{notation}
  $\bH^i$ stands for $\myR^i\Gamma$, the $i^\text{th}$ derived functor of $\Gamma$,
  the functor of sections, and $\bH^i_x$ stands for $\myR^i\Gamma_x$, the
  $i^\text{th}$ derived functor $\Gamma_x$, the functor of sections with support at
  $x$, i.e., the $i^\text{th}$ local cohomology functor with support at $x$ on the
  derived category of quasi-coherent sheaves on $X$.
\end{notation}

\section{Filtrations on modules over Artinian local rings}\label{sec:filtrations}
\noin We recall the following notation from \cite{KK20}.

\begin{demo-r}{Maximal filtrations}\label{notation}
  Let $(S,\frm, k)$ be an Artinian local ring and $N$ a finite $S$-module with a
  filtration
  $N= N_0\supsetneq N_1\supsetneq \dots \supsetneq N_{q} \supsetneq N_{q+1}=0$ such
  that $\factor {N_{j}}{N_{j+1}} \simeq k$ as $S$-modules for each $j=0,\dots,q$.
  Further let $f:(X,x)\to (\Spec S,\frm)$ be a local morphism and denote the fiber of
  $f$ over $\frm$ by $X_{k}$. It follows that then for each $j=0,\dots,q$,
  \begin{equation}
    \label{eq:4}
    f^*\left( \factor {N_{j}}{N_{j+1}} \right) \simeq \sO_{X_{k}}.
  \end{equation}
\end{demo-r}
\begin{demo-r}{Filtering $S$}\label{filt-S}
  In particular, considering $S$ as a module over itself, we choose a filtration of
  $S$ by ideals
  $S=I_0\supsetneq I_1\supsetneq \dots \supsetneq I_{q} \supsetneq I_{q+1}=0$ such that
  $\factor {I_{j}}{I_{j+1}} \simeq k$ as $S$-modules for all $0\leq j\leq q$.
  Observe that in this case $I_{1}=\frm$ and for every $j$ there exists a
  $t_j\in I_j$ such that the composition
  $\xymatrix{S\ar[r]^-{t_j \cdot} & I_j \ar[r] & \factor {I_j}{I_{j+1}}}$ induces an
  isomorphism $\factor S\frm\simeq \factor {I_j}{I_{j+1}}$. In particular,
  $\ann\left(\factor {I_j}{I_{j+1}}\right) =\frm$. Finally, let
  $S_j:=\factor S{I_j}$. Note that $S_1=\factor S\frm$ and $S_{q+1}=S$.
\end{demo-r}
\begin{demo-r}{Filtering $\omega_S$}\label{filt-o}
  Applying Grothendieck duality to the closed embedding given by the surjection
  $S\onto S_j$ implies that $\omega_{S_j} \simeq \Hom_S(S_j, \omega_S)$ and we obtain
  injective $S$-module homomorphisms
  $\varsigma_j: \omega_{S_j} \into \omega_{S_{j+1}}$ induced by the natural
  surjection $S_{j+1}\onto S_j$.  Using the fact that the canonical module of an
  Arinian local ring, in particular $\omega_S$, is an injective module and applying
  the functor $\Hom_S(\blank, \omega_S)$ to the short exact sequence of $S$-modules
  \[
  \xymatrix{%
    0 \ar[r] & \factor{I_j}{I_{j+1}} \ar[r] & S_{j+1} \ar[r] & S_j \ar[r] & 0,
  }
  \]
  we obtain another short exact sequence of $S$-modules:
  \begin{equation}
    \label{eq:10}
    \xymatrix{%
      0 \ar[r] & \omega_{S_j}
      \ar[r]^-{\varsigma_j} & \omega_{S_{j+1}} \ar[r] & \Hom_S\left(k,
        \omega_S\right )\simeq k \ar[r] & 0.
    }
  \end{equation}
  Therefore we obtain a filtration of $N=\omega_S$ by the submodules
  $N_j:=\omega_{S_{q+1-j}}$ as in \eqref{notation} where
  $q+1=\length_S(S)=\length_S(\omega_S)$.  The composition of the embeddings in
  \autoref{eq:10} will be denoted by
  $\varsigma:=\varsigma_{q}\circ\dots\circ\varsigma_1 :\omega_{S_1}\into
  \omega_{S_{q+1}}= \omega_{S}$.
\end{demo-r}

Recall that the \emph{socle} of a module $M$ over a local ring $(S,\frm,k)$ is
\begin{equation}
  \label{eq:16}
  \Soc M :=(0:\frm)_M = \{x\in M \mid \frm\cdot x =0 \} \simeq \Hom_S(k, M).
\end{equation}
$\Soc M$ is naturally a $k$-vector space and $\dim_k\Soc\omega_S=1$ by the
definition of the canonical module. In particular, $\Soc\omega_S\simeq k$ and this is
the only $S$-submodule of $\omega_S$ isomorphic to $k$.

Let us recall \cite[Lemma~3.4]{KK20}, which will be important later:

\begin{lem}\label{lem:I-times-omega}
  Using the notation from \eqref{filt-S} and \eqref{filt-o}, we have that 
  \begin{equation}
    \label{eq:12}
    \im \varsigma = \Soc \omega_{S} = I_{q} \omega_{S}. \qedhere
  \end{equation}
\end{lem}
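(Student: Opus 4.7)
The plan is to prove the two equalities $\im\varsigma = \Soc\omega_S$ and $I_q\omega_S = \Soc\omega_S$ by first identifying a common object (a one-dimensional $k$-subspace of $\omega_S$) and then invoking the uniqueness of such a subspace.

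First I would handle $\im\varsigma = \Soc\omega_S$. Since $S_1 = S/\frm = k$, Grothendieck duality gives $\omega_{S_1} = \omega_k \simeq k$, so $\im\varsigma$ is an $S$-submodule of $\omega_S$ isomorphic to $k$. Any such submodule is annihilated by $\frm$, hence sits inside $\Soc\omega_S$; but $\dim_k\Soc\omega_S = 1$ by the defining property of the canonical module, so $\im\varsigma = \Soc\omega_S$. This is essentially an immediate consequence of injectivity of $\varsigma$ (from the short exact sequences \eqref{eq:10}) together with the one-dimensionality of the socle.

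Next I would prove $I_q\omega_S = \Soc\omega_S$. For the inclusion $\subseteq$, observe that $\frm I_q \subseteq I_{q+1} = 0$, so $\frm\cdot(I_q\omega_S) = (\frm I_q)\omega_S = 0$, placing $I_q\omega_S$ inside the socle. The opposite inclusion amounts to showing $I_q\omega_S \neq 0$: since $I_q\omega_S$ is a $k$-subspace of the one-dimensional $k$-vector space $\Soc\omega_S$, nonvanishing forces equality. This is where one must use that $\omega_S$ is a faithful $S$-module, which holds because over an Artinian local ring the canonical module is the injective hull of the residue field and therefore faithful; equivalently, $\ann_S(\omega_S) = 0$, and since $I_q \neq 0$ we conclude $I_q\omega_S \neq 0$.

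The only mild obstacle is confirming that $\omega_S$ is faithful in this generality; this is standard, but it is the one nontrivial input (beyond the defining properties set up in \eqref{filt-S} and \eqref{filt-o}). Once that is in hand, the proof is a two-line verification. Combining both halves yields $\im\varsigma = \Soc\omega_S = I_q\omega_S$, as required.
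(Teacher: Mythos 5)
Your proof is correct. Note that the paper does not supply its own argument for this lemma---it is quoted directly from \cite[Lemma~3.4]{KK20}---so there is no in-paper proof to compare against. Your handling of the first equality is exactly what one expects: $\omega_{S_1}\simeq k$, $\varsigma$ is injective, and $\Soc\omega_S$ is the unique copy of $k$ inside $\omega_S$. For the second equality, your appeal to faithfulness of $\omega_S$ (as the injective hull of $k$, with $\Hom_S(\omega_S,\omega_S)\simeq S$ by Matlis duality, so $\ann_S\omega_S=0$) is a sound way to see $I_q\omega_S\neq 0$. An alternative that stays entirely inside the machinery of \eqref{filt-S} and \eqref{filt-o}, and hence avoids invoking faithfulness as an external fact, is to note that $I_q=St_q$, so $I_q\omega_S$ is the image of multiplication by $t_q$ on $\omega_S$; the kernel of that map is $(0:I_q)_{\omega_S}=\Hom_S(S_q,\omega_S)=\omega_{S_q}$, so $I_q\omega_S\simeq\omega_S/\omega_{S_q}$, which \autoref{eq:10} with $j=q$ identifies with $k$, whence $I_q\omega_S=\Soc\omega_S$ by uniqueness. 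Both routes are short; yours is slightly more self-contained, while the alternative is perhaps closer in spirit to the exact-sequence bookkeeping the paper sets up.
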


\begin{subrem}
  Note that this is not simply stating that these modules in \autoref{eq:12} are
  isomorphic, but that they are equal as submodules of $\omega_{S}$.
\end{subrem}

\section{Families over Artinian local rings}\label{sec:db-families-over}

\noin
We will frequently use the following notation.

\begin{notation}\label{not:top-loc-cohs}
  Let $A$ be a noetherian ring, $(R,\frm)$ a noetherian local $A$-algebra,
  $I\subset R$ a nilpotent ideal and $(T,\frn)\leteq (R/I,\frm/I)$ with natural
  morphism $\alpha:R\onto T$.
\end{notation}

\begin{defini}\label{def:liftable-cohom}
  Let $A$ be a noetherian ring, and $(T,\mathfrak n)$ a noetherian local
  $A$-algebra, and $i\in\bN$ fixed. We say that $T$ has \emph{\toploccohsi over $A$}
  if for any noetherian local $A$-algebra $(R,\frm)$ and nilpotent ideal $I\subset R$
  such that $R/I\simeq T$, the natural morphism on local cohomology
  \[
  \xymatrix{%
    H^i_{\frm}(R) \ar@{->>}[r] & H^i_{\mathfrak n}(T) }
  \]
  is surjective.
  Finally, if $T$ has \toploccohsi over $A$ for every $i\in\bN$, then we say that $T$
  has \emph{\toploccohs over $A$} \cite{KK20}.
  
  We say that $T$ has \emph{\toploccohsi}, 
  resp.~\emph{\toploccohs} if it has the relevant property over $\bZ$.
\end{defini}

\begin{rem}\label{rem:inheriting-top-loc-cohs}
  Notice that using the above notation, if $\phi:A'\to A$ is a ring homomorphism
  from another noetherian ring $A'$ then if $T$ has \toploccohsi over $A'$, then it
  also has \toploccohsi over $A$. In particular, if $T$ has \toploccohsi over $\bZ$,
  then it has \toploccohsi over any noetherian ring $A$ justifying the above
  terminology.

  Furthermore, if $A=k$ is a field of characteristic $0$ then the notions of having
  \toploccohsi over $k$ and over $\bZ$ are equivalent. This follows in one direction
  by the above and in the other by the Cohen structure theorem
  \cite[\href{https://stacks.math.columbia.edu/tag/032A}{Tag 032A}]{stacks-project}.
\end{rem}

\begin{defini}\label{def:top-loc-cohs-for-schemes}
  We extend this definition to schemes: Let $(X,x)$ be a local scheme over a
  noetherian ring $A$. Then we say that $(X,x)$ has \emph{\toploccohsi over $A$} if
  $\sO_{X,x}$ has \toploccohsi over $A$. If $f:X\to Z$ is a morphism of 
  schemes then we say that $X$ has \emph{\toploccohsi over $Z$} if $(X,x)$ has
  \toploccohsi over $A$ for each $x\in X$ and for each $\Spec A\subseteq Z$ open
  affine neighbourhood of $f(x)\in Z$. This also extends the notion of
  \emph{\toploccohs} in the obvious way.
\end{defini}

\begin{lem}\label{rem:top-loc-cohs-is-hereditary}
  Let $X\to Y\to Z$ be morphisms schemes. If $X$ has \toploccohsi over $Z$, then $X$
  has \toploccohsi over $Y$ as well.

  In particular, if $X$ has \toploccohsi over a field $k$, then it has \toploccohsi
  over any other $k$-scheme to which it admits a morphism. In addition if $\kar k=0$,
  then $X$ has \toploccohsi.
\end{lem}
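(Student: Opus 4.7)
The plan is to reduce everything to the ring-theoretic statement in Remark~\ref{rem:inheriting-top-loc-cohs} via a standard open-affine patching argument. The only subtle point is how a principal localization interacts with the nilpotent-ideal condition in the definition of liftable cohomology.

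To prove the main assertion, I would fix a point $x\in X$ with images $y\in Y$ and $z\in Z$, together with an arbitrary open affine $\Spec B\subseteq Y$ containing $y$; the task is to verify that $(X,x)$ has \toploccohsi over $B$. First I would choose an open affine $\Spec A\subseteq Z$ with $z\in\Spec A$, and then a principal open $\Spec B_g\subseteq\Spec B$ containing $y$ whose image in $Z$ lies in $\Spec A$. This produces a ring homomorphism $A\to B_g$; the hypothesis gives liftability of cohomology of $\sO_{X,x}$ over $A$, and Remark~\ref{rem:inheriting-top-loc-cohs} transports this to $B_g$.

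The key step---and the one requiring any care---is passing from $B_g$ back up to $B$, because the remark only propagates liftability along the direction of the ring map. I would handle this by observing that any noetherian local $B$-algebra $(R,\mathfrak{m}_R)$ with nilpotent ideal $I\subset R$ and an isomorphism $R/I\simeq\sO_{X,x}$ is automatically a $B_g$-algebra: the image of $g$ in $\sO_{X,x}$ is a unit (since $y\in\Spec B_g$), hence its image in $R$ is a unit as well, because an element of a local ring is invertible iff it is invertible modulo a nilpotent ideal. Therefore the required surjectivity $H^i_{\mathfrak{m}_R}(R)\twoheadrightarrow H^i_{\mathfrak{n}}(\sO_{X,x})$ follows directly from the $B_g$-version of liftability, yielding the desired conclusion for $B$.

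The two specializations then come for free. For the first, I would apply the part just proved to the composition $X\to Y\to\Spec k$ through the structure morphism of $Y$. For the second, Remark~\ref{rem:inheriting-top-loc-cohs} already identifies \toploccohsi over $k$ with that over $\bZ$ at the level of local rings when $\kar k=0$; combined with the propagation along ring maps from the first half of that remark, this spreads the conclusion to every open affine $\Spec\bZ[1/n]\subseteq\Spec\bZ$ meeting the image of each $x\in X$, recovering the unadorned notion. I do not anticipate a serious obstacle: the argument is entirely formal, and the whole proof should occupy only a few lines.
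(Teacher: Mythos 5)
Your proof is correct and takes essentially the same approach as the paper, whose proof of this lemma consists of the single sentence that it follows from the definitions and \autoref{rem:inheriting-top-loc-cohs}. You have in fact made explicit the one genuinely subtle point that the paper leaves unstated---the descent from $B_g$ back to $B$, which works because an element of a local ring is a unit if and only if it is a unit modulo a nilpotent ideal, so every local $B$-algebra $R$ with $R/I\simeq\sO_{X,x}$ automatically carries a compatible $B_g$-algebra structure.
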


\begin{proof}
  This follows from the definitions and \autoref{rem:inheriting-top-loc-cohs}.
\end{proof}


\noin Let us recall the following simple lemma 
from \cite[Lemma~4.4]{KK20}:

\begin{lem}\label{lem:surj-for-R-implies-surj-for-M}\label{loc-coh-surj-2}
  Using \autoref{not:top-loc-cohs} let $M$ be an $R$-module such that there exists a
  surjective $R$-module homomorphism $\phi: M\onto T$. Assume that the induced
  natural homomorphism $H^i_{\frm}(R) \onto H^i_\frn(T)$ is surjective for some
  $i\in\bN$.  Then the induced homomorphism on local cohomology
  \begin{equation}
    \label{eq:32}
    \xymatrix{%
      H^i_{\frm}(M) \ar@{->>}[r] & H^i_{\frm}(T)\simeq H^i_\frn(T) }
  \end{equation}
  is surjective for the same $i$. In particular, if $(T,\frn)$ has \toploccohs over
  $A$, then the homomorphism in \autoref{eq:32} is surjective for every $i\in\bN$.
\end{lem}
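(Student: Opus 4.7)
The plan is to exploit the surjectivity $\phi: M \onto T$ to factor the structural map $\alpha: R \to T$ through $M$, and then apply local cohomology.

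First, I would pick a preimage: since $\phi$ is surjective, choose $m_0 \in M$ with $\phi(m_0) = 1_T$. Then the $R$-linear map $\mu: R \to M$ defined by $r \mapsto r \cdot m_0$ satisfies $\phi \circ \mu = \alpha$, because for all $r \in R$,
\[
  \phi(r \cdot m_0) = r \cdot \phi(m_0) = r \cdot 1_T = \alpha(r).
\]
So we obtain a commutative triangle of $R$-modules $R \xrightarrow{\mu} M \xrightarrow{\phi} T$ whose composition is $\alpha$.

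Next, I would apply the local cohomology functor $H^i_{\frm}(\blank)$. Since $T$ is annihilated by $I$ and $I$ is nilpotent (so $\frn$ and $\frm$ have the same radical as ideals acting on $T$), we have $H^i_{\frm}(T) \simeq H^i_{\frn}(T)$ naturally. This yields a factorization
\[
  H^i_{\frm}(R) \xrightarrow{H^i_{\frm}(\mu)} H^i_{\frm}(M) \xrightarrow{H^i_{\frm}(\phi)} H^i_{\frm}(T) \simeq H^i_{\frn}(T),
\]
whose composition is the natural map $H^i_{\frm}(R) \to H^i_{\frn}(T)$ induced by $\alpha$.

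By hypothesis this composition is surjective, so the second arrow $H^i_{\frm}(\phi)$ is surjective as well, which is exactly \autoref{eq:32}. The ``in particular'' statement is then immediate: if $(T, \frn)$ has liftable local cohomology over $A$, the surjectivity $H^i_{\frm}(R) \onto H^i_{\frn}(T)$ holds for every $i \in \bN$, and applying the first part for each $i$ gives the conclusion. There is no real obstacle here; the only subtlety is observing the factorization of $\alpha$ through $M$ via the choice of $m_0$, after which functoriality of $H^i_{\frm}$ does all the work.
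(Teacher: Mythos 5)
Your proof is correct, and it is the standard argument for this lemma (which the paper simply cites from [KK20, Lemma~4.4] without reproducing a proof): choose a lift $m_0\in M$ of $1_T$, observe that the resulting $R$-linear map $\mu:R\to M$, $r\mapsto rm_0$, satisfies $\phi\circ\mu=\alpha$, and conclude by functoriality of $H^i_{\frm}$ that surjectivity of $H^i_{\frm}(\alpha)$ forces surjectivity of $H^i_{\frm}(\phi)$. The identification $H^i_{\frm}(T)\simeq H^i_{\frn}(T)$ for the $T$-module $T$ is correctly justified by nilpotence of $I$, and the ``in particular'' clause follows immediately.
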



\noin
We will also need the following. 

\begin{lem}
  \label{Groth-van-in-der-cats}
  Let $\mcD_i$ be the derived category of an abelian category $\mcA_i$ for $i=1,2$,
  $\Phi:\mcD_1\to\mcD_2$ a triangulated functor, and define
  $\Phi^i\leteq h^i\circ\Phi:\mcD_1\to\mcA_2$.  Let $\sfA\in\Ob\mcD_1$ such that
  $h^j(\sfA)=0$ for $j>d$ for some $d\in\bZ$ and assume that there exists an
  $m\in\bN$ such that $\Phi^i(h^j(\sfA))=0$ for $i>m$ and for each $j\in\bZ$.  Then
  $\Phi^i(\sfA)=0$ for $i>m+d$.
\end{lem}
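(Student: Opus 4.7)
The plan is to propagate vanishing up the standard truncation tower of $\sfA$ via the triangulated structure of $\Phi$. Heuristically this is the hypercohomology spectral sequence
\[
E_2^{p,q}=\Phi^p(h^q(\sfA))\Longrightarrow\Phi^{p+q}(\sfA),
\]
whose $E_2$-page is supported in the quadrant $\{p\leq m,\;q\leq d\}$ by the two hypotheses, forcing the abutment to vanish in total degrees exceeding $m+d$. I would argue directly via truncation triangles rather than invoke the spectral sequence machinery.

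First I would replace $\sfA$ by $\tau^{\leq d}\sfA$, which is canonically isomorphic to $\sfA$ in $\mcD_1$ since $h^j(\sfA)=0$ for $j>d$. For each $n\leq d$ the standard truncation triangle
\[
\tau^{\leq n-1}\sfA\longrightarrow\tau^{\leq n}\sfA\longrightarrow h^n(\sfA)[-n]\longrightarrow
\]
yields, upon applying $\Phi$ and taking cohomology, a long exact sequence
\[
\cdots\to\Phi^i(\tau^{\leq n-1}\sfA)\to\Phi^i(\tau^{\leq n}\sfA)\to\Phi^{i-n}(h^n(\sfA))\to\Phi^{i+1}(\tau^{\leq n-1}\sfA)\to\cdots,
\]
in which the third term vanishes as soon as $i>m+n$ by hypothesis. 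Hence an inductive statement of the form ``$\Phi^i(\tau^{\leq n-1}\sfA)=0$ for $i>m+(n{-}1)$'' upgrades via this sequence to ``$\Phi^i(\tau^{\leq n}\sfA)=0$ for $i>m+n$''. Iterating up to $n=d$ then gives the desired $\Phi^i(\sfA)=0$ for $i>m+d$.

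The main obstacle is providing the base of this ascending induction. If $\sfA$ is cohomologically bounded below as well, say $h^j(\sfA)=0$ for $j<e$, then $\tau^{\leq e-1}\sfA=0$ and the induction is finite, terminating at a trivial base --- this is the setting that arises in the intended applications in \autoref{sec:db-families-over}. In the fully unbounded-below case the tower has no bottom; one must separately justify that for a fixed $i_0>m+d$ the value $\Phi^{i_0}(\sfA)$ is already computed by a sufficiently high truncation, which in turn reduces to convergence of the hypercohomology spectral sequence above under the bounded-above hypothesis $\sfA\in\mcD_1^{\leq d}$. This is the only delicate point; once it is settled, the inductive step above closes out the argument.
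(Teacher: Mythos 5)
Your truncation-tower induction is the paper's proof unwound: the long exact sequences coming from the triangles $\tau^{\leq r-1}\sfA\to\tau^{\leq r}\sfA\to h^r(\sfA)[-r]$ are exactly what assembles into the conjugate spectral sequence $E_2^{p,q}=\Phi^p(h^q(\sfA))\Rightarrow\Phi^{p+q}(\sfA)$, which is the single line the paper writes down. The convergence caveat you flag for unbounded-below $\sfA$ is genuine and is silently present in the paper's proof as well: as stated, the lemma needs either $\sfA$ bounded or $\Phi$ of finite cohomological dimension, and the latter is what supplies the base of your ascending induction. In the one place the lemma is invoked, \autoref{cor:coh-of-Lf-same-as-f}, this finiteness does hold, since $\Phi=\myR\Gamma_x$ on an $n$-dimensional scheme is computed by tensoring against a bounded complex of flat modules (the stable Koszul complex), so that $\myR^i\Gamma_x$ of any complex cohomologically concentrated in degrees $\leq d$ vanishes once $i>n+d$. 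You should close the ``delicate point'' by invoking that finiteness explicitly rather than leaving it open; with that supplement your argument is complete and coincides with the paper's.
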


\begin{proof}
  Consider the conjugate spectral sequence associated to $\sfA$ and $\Phi$:
  \[
    E^{p,q}_2=\Phi^p(h^q(\sfA))\Rightarrow \Phi^{p+q}(\sfA).
  \]
  By the assumptions $E^{p,q}_2=0$ if either $p>m$ or $q>d$, which implies that
  $E^{p,q}_2=0$ for $p+q>m+d$. This implies the desired statement.
\end{proof}

\begin{defini}\label{def:flat-in-codim-t}
  Let $f:X\to Y$ be a morphism. Then $f$ is said to be \emph{flat in codimension $t$}
  if there exists a closed subset $Z\subseteq X$ such that
  $\codim(Z\cap X_y,X_y)\geq t+1$ for every $y\in Y$ and $f\resto{X\setminus Z}$ is
  flat.
\end{defini}

In the proof of the next statement we will use the \emph{canonical truncation} of
cochain complexes of objects of an abelian category, which has the property that its
cohomology objects are the same as the original complex up to or above the given
index. We follow the notation and terminology of
\cite[\href{https://stacks.math.columbia.edu/tag/0118}{Tag 0118}]{stacks-project}. In
particular, for any complex $\cmx C$ and any $r\in\bZ$, we have the following
distinguished triangle of complexes,%
\begin{equation}
  \label{eq:3}
  \xymatrix{%
    \tau_{\leq r}(\cmx C) \ar[r] & \cmx C \ar[r] & \tau_{\geq r+1}(\cmx C)
    \ar[r]^-{+1} 
    &  }
\end{equation}

\begin{cor}\label{cor:coh-of-Lf-same-as-f}
  Let $(S,\frm,k)$ be an Artinian local ring, $N$ a finite $S$-module, $(X,x)$ a
  local  scheme of dimension $n$, and $f:(X,x)\to (\Spec S,\frm)$ a local
  morphism. Assume that $f$ is flat in codimension $t-1$.  Then the natural morphism
  \[
    \xymatrix{%
      \myR^i\Gamma_x(\myL f^*N) \ar[r]^-\simeq & H^i_x(f^*N) 
    }
  \]
  is an isomorphism for $i\geq n-t$.
\end{cor}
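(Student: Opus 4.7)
The plan is to compare $\myL f^*N$ with its zeroth cohomology $f^*N$ via the canonical truncation triangle
\[
\tau_{\leq -1}(\myL f^* N) \rtarr \myL f^* N \rtarr f^* N \stackrel{+1}{\rtarr}
\]
and show that $\myR^i\Gamma_x$ of the truncated piece vanishes for $i\geq n-t$. Once this vanishing is established, the long exact sequence obtained by applying $\myR\Gamma_x$ to the triangle forces the natural map $\myR^i\Gamma_x(\myL f^*N)\to H^i_x(f^*N)$ to be an isomorphism in that range, since both the incoming term in degree $i$ and the outgoing term in degree $i+1$ then vanish.

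First I would note that the cohomology sheaves $h^{-j}(\myL f^*N)=\sTor^S_j(\sO_X,N)$ for $j\geq 1$ are all supported on the non-flat locus of $f$. Because $S$ is local and Artinian, the maximal ideal $\frm\subset S$ is nilpotent, so $X$ and its scheme-theoretic fiber over the closed point share the same underlying topological space; the hypothesis that $f$ be flat in codimension $t-1$ therefore produces a closed subset $Z\subseteq X$ with $\codim(Z,X)\geq t$ that contains the supports of all these Tor sheaves. Hence $\dim_xZ\leq n-t$, and Grothendieck vanishing yields $H^i_x(\mcF)=0$ for $i>n-t$ and any quasi-coherent $\sO_X$-module $\mcF$ supported on $Z$.

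To upgrade this sheaf-level vanishing to a vanishing on the complex itself, I would apply \autoref{Groth-van-in-der-cats} with $\mcA_1=\mcA_2$ the category of quasi-coherent $\sO_X$-modules, $\Phi=\myR\Gamma_x$, and $\sfA\leteq\tau_{\leq -1}\myL f^*N$. Here $h^j(\sfA)=0$ for $j>-1$, giving $d=-1$, while the preceding paragraph gives $H^i_x\bigl(h^j(\sfA)\bigr)=0$ for $i>n-t$, so $m=n-t$. The lemma then yields $H^i_x(\sfA)=0$ for $i>m+d=n-t-1$, i.e., for $i\geq n-t$, which is exactly what is needed.

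The only subtlety is translating the fiberwise codimension assumption into a codimension statement on $X$, which works cleanly here precisely because the base is Artinian and therefore topologically a single point. The remainder is a formal consequence of the truncation triangle combined with \autoref{Groth-van-in-der-cats}, and I do not anticipate any serious obstacle.
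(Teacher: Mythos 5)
Your proof is correct and follows essentially the same route as the paper: both proofs pass to the truncation triangle $\tau_{\leq -1}(\myL f^*N)\to \myL f^*N\to \tau_{\geq 0}(\myL f^*N)\simeq f^*N$, observe that the lower cohomology sheaves are supported in codimension $\geq t$, invoke Grothendieck vanishing, and then apply \autoref{Groth-van-in-der-cats} with $\Phi=\myR\Gamma_x$, $m=n-t$, $d=-1$. The only difference is that you spell out why the fiberwise codimension bound controls $\dim\supp\myL^jf^*N$ (via nilpotence of $\frm$ and the resulting homeomorphism $X_k\to X$), whereas the paper takes this step as immediate.
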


\begin{proof}
  As $f$ is flat in codimension $t-1$, it follows that $\dim\supp \myL^jf^*N\leq n-t$
  for each $j<0$. This implies that $H^i_x(\myL^jf^*N)=0$ for $i>n-t$ and $j<0$.  Let
  $\sfA\leteq \tau_{\leq-1}(\myL f^*N)$ and $\sfB\leteq \tau_{\geq 0}(\myL
  f^*N)$. Then \autoref{eq:3} gives a distinguished triangle of complexes of
  $\sO_X$-modules,
  \[
    \xymatrix{%
      \sfA \ar[r] & \myL f^*N \ar[r] & \sfB \ar[r]^-{+1} & .  }
  \]
  Furthermore, $h^{j}(\sfA)=\myL^{j}f^*N$ for $j<0$ and $h^{j}(\sfA)=0$ for
  $j\geq 0$, hence \autoref{Groth-van-in-der-cats} (for $\sfA$, $\Phi=\myR\Gamma_x$,
  $m=n-t$ and $d=-1$) implies that $\myR^i\Gamma_x(\sfA)=0$ for $i>n-t-1$. Finally,
  $\sfB\qis f^*N$, so the desired statement follows.
\end{proof}

\begin{prop}\label{thm:loc-coh-inj}
  Let $(S,\frm,k)$ be an Artinian local ring, $(X,x)$ a local  scheme of
  dimension $n$, and $f:(X,x)\to (\Spec S,\frm)$ a local morphism. Assume that $f$ is
  flat in codimension $t-1$.  Let $N$ be a finite $S$-module with a filtration as in
  \eqref{notation} and assume that $(X_{k}, x)$, where $X_{k}$ is the fiber of
  $f$ over the closed point of $\Spec S$, has \toploccohst over $S$.  Then for each
  $i>n-t$ and for each $j$, the natural sequence of morphisms induced by the
  embeddings $N_{j+1}\into N_j$ forms a short exact sequence,
  \[
  \xymatrix{%
    0 \ar[r] & H^i_x(f^*N_{j+1}) \ar[r] & H^i_x(f^*N_{j}) \ar[r] & H^i_x\left(
      f^*\left(\factor {N_{j}}{N_{j+1}}\right) \right)\simeq
    H^i_x\left(\sO_{X_{k}} \right) \ar[r] & 0. }
  \]
\end{prop}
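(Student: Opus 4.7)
The plan is to apply the triangulated functor $\myR\Gamma_x \circ \myL f^*$ to the short exact sequence of $S$-modules
\[
0 \to N_{j+1} \to N_j \to N_j/N_{j+1} \to 0,
\]
which (after using \autoref{eq:4}) produces a long exact sequence of the form
\[
\cdots \to \myR^{i-1}\Gamma_x(\myL f^*\sO_{X_k}) \xrightarrow{\delta} \myR^i\Gamma_x(\myL f^*N_{j+1}) \to \myR^i\Gamma_x(\myL f^*N_{j}) \to \myR^i\Gamma_x(\myL f^*\sO_{X_k}) \to \cdots
\]
after noting that $\myL f^*(N_j/N_{j+1})$ has zeroth cohomology $\sO_{X_k}$. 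The proposition will follow once I show that for $i>n-t$ the relevant connecting homomorphisms vanish and the higher $\myL^j f^*$ terms do not contribute to the terms in degrees $\geq n-t$.

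First, I would invoke \autoref{cor:coh-of-Lf-same-as-f} to identify $\myR^i\Gamma_x(\myL f^*M) \simeq H^i_x(f^*M)$ for $i\geq n-t$ and for each $M\in\{N_{j+1},N_j,N_j/N_{j+1}\}$; this requires only that $f$ is flat in codimension $t-1$. The upshot is that, after truncating the long exact sequence to indices $\geq n-t$, the terms become the ordinary local cohomologies that appear in the statement, and the map $H^i_x(f^*N_j)\to H^i_x(\sO_{X_k})$ coincides with the one induced by the surjection $f^*N_j\onto f^*(N_j/N_{j+1})\simeq\sO_{X_k}$.

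Next, I would apply \autoref{lem:surj-for-R-implies-surj-for-M} with $R=\sO_{X,x}$, $T=\sO_{X_k,x}$, $I=\frm\cdot\sO_{X,x}$ (nilpotent because $S$ is Artinian), and $M=(f^*N_j)_x$, whose existence of a surjection onto $T$ is guaranteed by the filtration step $N_j\onto N_j/N_{j+1}\simeq k$. Combined with the hypothesis that $(X_k,x)$ has \toploccohst over $S$, this yields surjectivity of $H^i_x(f^*N_j)\onto H^i_x(\sO_{X_k})$ for every $i\geq n-t$ (and, by the same argument applied to $N_{j+1}$ in place of $N_j$, for the analogous map from $N_{j+1}$ as well). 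In the long exact sequence, this surjectivity for index $i-1\geq n-t$ forces the connecting map $\delta: H^{i-1}_x(\sO_{X_k}) \to H^i_x(f^*N_{j+1})$ to vanish, which is precisely the injectivity of $H^i_x(f^*N_{j+1})\to H^i_x(f^*N_j)$ for $i>n-t$; surjectivity of the right-hand map has already been established, so the claimed short exact sequence follows.

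The main subtlety will be the careful bookkeeping at the boundary index $i-1=n-t$: one must know that $\myR^{i-1}\Gamma_x(\myL f^*(N_j/N_{j+1}))$ really is $H^{n-t}_x(\sO_{X_k})$ (which is where \autoref{cor:coh-of-Lf-same-as-f} is tight, requiring flatness in codimension $t-1$) and that the surjectivity furnished by \autoref{lem:surj-for-R-implies-surj-for-M} does hold at that edge index. Everything else is a formal consequence of the distinguished triangle, and no further analysis of the filtration beyond the single step $N_{j+1}\subset N_j$ is needed, since the statement is indexed by $j$.
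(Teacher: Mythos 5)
Your proof is correct and follows essentially the same route as the paper's: both apply $\myR\Gamma_x$ to the distinguished triangle obtained from $\myL f^*$ of the short exact sequence $0\to N_{j+1}\to N_j\to N_j/N_{j+1}\to 0$, identify the relevant terms with $H^i_x(f^*(\cdot))$ via \autoref{cor:coh-of-Lf-same-as-f}, and obtain the needed surjectivity (which kills the connecting maps) from \autoref{lem:surj-for-R-implies-surj-for-M} together with the liftable local cohomology hypothesis. The only cosmetic difference is that you spell out the vanishing of the connecting homomorphism explicitly, whereas the paper simply concludes that the long exact sequence breaks into the stated short exact sequences.
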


\begin{proof}
  Since $\ann\left(\factor {N_{j}}{N_{j+1}}\right) =\frm$, there is a natural
  surjective morphism
  \[
  f^*N_{j}\otimes \sO_{X_{k}}\onto f^*\left(\factor {N_{j}}{N_{j+1}}\right).
  \]
  By \autoref{loc-coh-surj-2} and \autoref{eq:4}, the natural homomorphism
  \begin{equation}
    \label{eq:5}
    \xymatrix{%
      H^i_x(f^*N_{j}) \ar@{->>}[r] & H^i_x\left(
        f^*\left(\factor {N_{j}}{N_{j+1}}\right)
      \right)\simeq 
      H^i_x\left(\sO_{X_{k}} \right) }
  \end{equation}
  is surjective for all $i\geq n-t$.
  Next, consider the distinguished triangle 
  \[
    \xymatrix{%
      \myL f^*N_{j+1} \ar[r] & \myL f^*N_{j} \ar[r] & \myL f^*\left(\factor
        {N_{j}}{N_{j+1}}\right) \ar[r]^-{+1} & , }
  \]
  and the induced long exact cohomology sequence for the functor $\myR\Gamma_x$.  By
  \autoref{cor:coh-of-Lf-same-as-f} the terms of that long exact sequence maybe
  replaced by terms in the form of $H^i_x(f^*(\ \ ))$ for $i\geq n-t$ and hence the
  statement follows from \autoref{eq:5}.
\end{proof}

\begin{demo-r}{The exceptional inverse image of the structure
    sheaves}\label{uppershriek}%
  Let $(S,\frm,k)$ be an Artinian local ring with a filtration by ideals as in
  \eqref{filt-S}. Further let $f:X\to \Spec S$ be a morphism which is essentially of
  finite type and $f_j=f\resto{X_j}:X_j:=X\times_{\Spec S}\Spec S_j\to \Spec S_j$
  where $S_j=S/I_j$ as defined in \eqref{filt-S}, e.g., $X_{q+1}=X$ and
  $X_{1}=X_{k}$, the fiber of $f$ over the closed point of $S$. By a slight abuse
  of notation we will denote $\omega_{\Spec S}$ with $\omega_S$ as well, but it will
  be clear from the context which one is meant at any given time.

  Using the description of the exceptional inverse image functor via the
  residual/dualizing complexes \cite[(3.3.6)]{Conrad00} (cf.\cite[3.4(a)]{RD},
  \cite[\href{http://stacks.math.columbia.edu/tag/0E9L}{Tag 0E9L}]{stacks-project}):
  \begin{equation}
    \label{eq:21}
    f^! = \myR\sHom_X(\myL f^*\myR\sHom_S(\blank, \dcx S), \dcx X) 
  \end{equation}
  and because $S$ is Artinian, $\dcx{S_j}\simeq \omega_{S_j}$ for each $j$ and we
  have that
  \begin{equation}
    \label{eq:2}
    \dcx{X_j/S_j}\simeq f_j^!\sO_{\Spec S_j}\simeq 
    \myR\sHom_{X_j}(\myL f_j^*\omega_{S_j},    \dcx {X_j}).
  \end{equation}
\end{demo-r}

\noin
In the rest of this section we will use the following notation and assumptions. 

\begin{assume}\label{ass:DB}
  Let $(S,\frm,k)$ be an Artinian local ring, $(X,x)$ a local  scheme of
  dimension $n$, and $f:(X,x)\to (\Spec S,\frm)$ a local morphism. Assume that $f$ is
  flat in codimension $t-1$ and that $(X_{k}, x)$, where $X_{k}$ is the fiber of
  $f$ over the closed point of $\Spec S$, has \toploccohst over $S$.
\end{assume}

\begin{thm}\label{thm:surjectivity}
  For each $i>n-t$ and each $j\in\bN$,
  \begin{enumerate}
  \item\label{item:1} there exists a natural surjective morphism
    $\xymatrix{%
      \varrho_{i,j} : \sfh^{-i}(\dcx {X_{j+1}/S_{j+1}}) \ar@{->>}[r] & \sfh^{-i}(\dcx
      {X_{j}/S_{j}}) }$,
  \item\label{item:2} there exists a natural surjective morphism $\xymatrix{%
      \varrho^i= \varrho_{i,1}\circ\dots\circ\varrho_{i,q} : \sfh^{-i}(\dcx {X/S})
      \ar@{->>}[r] & \sfh^{-i}(\dcx {X_{k}}) }$,
  \item\label{item:3} the natural morphisms $\varrho_{i,j}$ fit into a \ses,
    \[
      \xymatrix@C4em{%
        0 \ar[r] & \sfh^{-i}(\dcx {X_{k}}) \ar[r] & \sfh^{-i}(\dcx
        {X_{j+1}/S_{j+1}}) \ar[r]^-{\varrho_{i,j}} & \sfh^{-i}(\dcx {X_j/S_j})
        \ar[r] & 0,}
    \]
  \item\label{item:5}
    $\ker \varrho_{i,j} = I_{j}\sfh^{-i}(\dcx {X_{j+1}/S_{j+1}}) \simeq
    \factor {I_{j}\sfh^{-i}(\dcx {X/S})}{I_{j+1}\sfh^{-i}(\dcx {X/S})}$,
  \item\label{item:4}
    $\sfh^{-i}(\dcx {X_{j}/S_{j}})\simeq \factor {\sfh^{-i}(\dcx {X/S})}
    {I_{j}\sfh^{-i}(\dcx {X/S})}\simeq {\sfh^{-i}(\dcx {X/S})} \otimes_{\sO_X}
    \sO_{X_j}$, and
  \item\label{item:11} $\ker \varrho^i = \frm\sfh^{-i}(\dcx {X/S})$.
  \end{enumerate}
\end{thm}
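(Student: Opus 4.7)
The plan is to deduce \autoref{thm:surjectivity} from \autoref{thm:loc-coh-inj} via local duality on the stalk $\sO_{X,x}$. The bridge between the two statements is the identification
\[
\sfh^{-i}(\dcx{X_j/S_j})_x \simeq H^i_x(f^*\omega_{S_j})^\vee,
\]
valid for each $i > n-t$, where $(-)^\vee$ denotes the Matlis dual. This follows by combining the description $\dcx{X_j/S_j} \simeq \myR\sHom_{\sO_X}(\myL f^*\omega_{S_j}, \dcx X)$ from \eqref{uppershriek}, local duality $\sfh^{-i}(\myR\sHom(-,\dcx X))_x \simeq \myR^i\Gamma_x(-)^\vee$, and the replacement of $\myR^i\Gamma_x\myL f^*$ by $H^i_x f^*$ for $i\geq n-t$ via \autoref{cor:coh-of-Lf-same-as-f}, which is where the flatness-in-codimension-$(t-1)$ hypothesis enters.

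For \autoref{item:1}--\autoref{item:3}, I would apply \autoref{thm:loc-coh-inj} to the filtration of $N=\omega_S$ constructed in \eqref{filt-o}. Since each subquotient is a copy of $k$ by \autoref{eq:10}, the proposition gives, for each $i>n-t$ and each $0\leq j\leq q$, the short exact sequence
\[
0 \to H^i_x(f^*\omega_{S_j}) \to H^i_x(f^*\omega_{S_{j+1}}) \to H^i_x(\sO_{X_k}) \to 0.
\]
Matlis duality is exact, so dualizing and applying the bridge identification produces the short exact sequence in \autoref{item:3}, with $\varrho_{i,j}$ being the Matlis dual of the inclusion on the local cohomology side. \autoref{item:1} and \autoref{item:2} follow immediately, the latter by taking $\varrho^i$ to be the composition.

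The remaining parts reduce to the assertion that the image of $H^i_x(f^*\omega_{S_j}) \hookrightarrow H^i_x(f^*\omega_{S_{j+1}})$ equals the $I_j$-annihilator submodule $\bigl(0:_{H^i_x(f^*\omega_{S_{j+1}})} I_j\bigr)$. The inclusion $\subseteq$ is formal, since $\omega_{S_j}$ is annihilated by $I_j$. For the reverse inclusion, \autoref{lem:I-times-omega} applied to $S_{j+1}$ factors the surjection $\omega_{S_{j+1}} \onto k$ as multiplication by $t_j$ followed by the identification $t_j\omega_{S_{j+1}} = \Soc\omega_{S_{j+1}} \simeq k$; hence any element of $H^i_x(f^*\omega_{S_{j+1}})$ killed by $t_j$, equivalently by $I_j$ since $I_{j+1}$ already annihilates the module, maps to zero in $H^i_x(\sO_{X_k})$ and thus lies in the image. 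Matlis dualizing converts this $I_j$-annihilator inclusion into the quotient by $I_j$, which yields both the kernel identity $\ker\varrho_{i,j} = I_j\sfh^{-i}(\dcx{X_{j+1}/S_{j+1}})$ of \autoref{item:5} and the one-step quotient description $\sfh^{-i}(\dcx{X_j/S_j}) \simeq \sfh^{-i}(\dcx{X_{j+1}/S_{j+1}})/I_j\sfh^{-i}(\dcx{X_{j+1}/S_{j+1}})$. A descending induction on $j$ with trivial base case $j=q+1$ then propagates this to the full statement of \autoref{item:4}; the second isomorphism in \autoref{item:5} follows formally using $I_{j+1}\subseteq I_j$, and \autoref{item:11} is \autoref{item:4} specialized to $j=1$ with $I_1=\frm$. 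The delicate step is precisely the reverse inclusion above: it is the only place where the multiplicative structure of the filtration via the elements $t_j$ from \eqref{filt-S} is used essentially.
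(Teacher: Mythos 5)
Your overall strategy is the same as the paper's: translate to local cohomology via local/Matlis duality, apply \autoref{thm:loc-coh-inj} to get the short exact sequences for \sfref{item:1}--\sfref{item:3}, and then reduce \sfref{item:5}, \sfref{item:4}, \sfref{item:11} to identifying $\ker\varrho_{i,j}$ with a concrete submodule via the multiplicative structure of the filtration. Parts \sfref{item:1}--\sfref{item:3} are handled correctly.

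However, your proof of the crucial claim --- that $\im\bigl(H^i_x(f^*\omega_{S_j})\into H^i_x(f^*\omega_{S_{j+1}})\bigr)=(0:I_j)_{H^i_x(f^*\omega_{S_{j+1}})}$ --- has a genuine gap in the reverse inclusion. You factor the module-level surjection $p:\omega_{S_{j+1}}\onto k$ as ``multiplication by $t_j$ followed by $t_j\omega_{S_{j+1}}=\Soc\omega_{S_{j+1}}\simeq k$,'' and then assert that any element of $H^i_x(f^*\omega_{S_{j+1}})$ killed by $t_j$ must map to zero under $H^i_x(f^*p)$. This inference does not follow from the module-level factorization. The endomorphism ``multiplication by $t_j$'' of $\omega_{S_{j+1}}$ equals $\iota\circ p'$, where $p'$ is the corestriction of $p$ to $\Soc\omega_{S_{j+1}}$ and $\iota:\Soc\omega_{S_{j+1}}\into\omega_{S_{j+1}}$ is the inclusion. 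Applying $H^i_x\circ f^*$ gives $H^i_x(f^*\iota)\circ H^i_x(f^*p')=(\text{mult.~by }t_j)$, so ``killed by $t_j$'' only tells you that $H^i_x(f^*\iota)$ annihilates $H^i_x(f^*p')(a)$; to conclude $H^i_x(f^*p')(a)=0$ you need $H^i_x(f^*\iota)$ to be injective. This is exactly the nontrivial point: neither $f^*$ nor $H^i_x$ preserves injections in general. The paper proves it by identifying $\Soc\omega_{S_{j+1}}$ with $\omega_{S_1}$ inside $\omega_{S_{j+1}}$ (\autoref{lem:I-times-omega}) and then observing that $\omega_{S_1}\into\omega_{S_{j+1}}$ is a composite of the embeddings $\varsigma_{j'}$ of \eqref{filt-o}, each of which remains injective on $H^i_x(f^*(\cdot))$ by \autoref{thm:loc-coh-inj}; this is the content of the paper's \autoref{eq:15} and \autoref{eq:25}. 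Your tools are sufficient to fill this hole, but as written the argument silently assumes the persistence of the factorization after $H^i_x\circ f^*$, which is precisely what needs proof.
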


\begin{proof}
  Let $N=\omega_S$ and consider the filtration on $N$ given by
  $N_{j}=\omega_{S_{q+1-j}}$, cf.~\eqref{filt-o}, \autoref{eq:10}.
  %
  Further let 
  $(\ )\what{\ }$ denote the completion at $x$ (the closed point of $X$).  Then by
  \autoref{thm:loc-coh-inj}, for each $i>n-t$ and each $j$, there exists a short
  exact sequence
  \begin{equation}
    \label{eq:8}
    \xymatrix{%
      0 \ar[r] & H^i_x(f^*\omega_{S_{j}}) \ar[r] & H^i_x(f^*\omega_{S_{j+1}}) \ar[r] &
      H^i_x\left(f^*\left(\factor {\omega_{S_{j+1}}}{\omega_{S_{j}}}\right)\right)
      \ar[r] & 0. }    
  \end{equation}
  Notice that $f^*\omega_{S_j}\simeq f_j^*\omega_{S_j}$.  Combining this observation
  for both $j$ and $j+1$ with \autoref{cor:coh-of-Lf-same-as-f} yields that this \ses
  may also be written as
  \begin{equation}
    \label{eq:1}
    \xymatrix{%
      0 \ar[r] & \myR\Gamma^i_x(\myL f_j^*\omega_{S_{j}}) \ar[r] &
      \myR\Gamma^i_x(\myL f_{j+1}^*\omega_{S_{j+1}}) \ar[r] & 
      \myR\Gamma^i_x\left(f^*\left(\factor
          {\omega_{S_{j+1}}}{\omega_{S_{j}}}\right)\right) 
      \ar[r] & 0. }        
  \end{equation}
  Applying local duality \cite[\href{https://stacks.math.columbia.edu/tag/0AAK}{Tag
    0AAK}]{stacks-project} to \autoref{eq:1} gives the short exact sequence
  \[
    \xymatrix@C1.5em{%
      0 \ar[r] & \sExt^{-i}_{X} \left(f^*\left(\factor
          {\omega_{S_{j+1}}}{\omega_{S_{j}}}\right), 
        \dcx{X} \right) \what{\vphantom{\big)}} \ar[r] & \sExt^{-i}_{X}(\myL
      f_{j+1}^*\omega_{S_{j+1}}, \dcx{X} )\ \what{} \ar[r] & \sExt^{-i}_{X}(\myL
      f_j^*\omega_{S_{j}}, \dcx{X} )\ \what{} \ar[r] & 0. }
  \]
  Since completion is faithfully flat
  \cite[\href{http://stacks.math.columbia.edu/tag/00MC}{Tag 00MC}]{stacks-project},
  this implies that there are short exact sequences
  \begin{align}
    \label{eq:11}
    \begin{split}
      \xymatrix{%
        0 \ar[r] & \sExt^{-i}_{X} \left(f^*\left(\factor
            {\omega_{S_{j+1}}}{\omega_{S_{j}}}\right), 
          \dcx{X} \right) \ar[r] & \hskip6em& \hskip6em } \\ \xymatrix{%
        && & \ar[r] & \sExt^{-i}_{X}\left(\myL f_{j+1}^*\omega_{S_{j+1}}, \dcx{X}
        \right) \ar[r] & \sExt^{-i}_{X}\left(\myL f_j^*\omega_{S_{j}}, \dcx{X}
        \right) \ar[r] & 0. }
    \end{split}
  \end{align}\
  By Grothendieck duality
  \[
    \myR\sHom_{X}(\myL f_j^*\omega_ {S_j}, \dcx {X}) \simeq \myR\sHom_{X_j}(\myL
    f_j^*\omega_ {S_j}, \dcx {X_j}),
  \]
  and hence
  $\sExt^{-i}_{X}\left(\myL f_j^*\omega_{S_{j}}, \dcx{X} \right)\simeq \sfh^{-i}(\dcx
  {X_{j}/S_{j}})$ for each $i,j$,
  by \autoref{eq:2}. 
  Therefore defining $\varrho_{i,j}$ as the surjective morphism in \autoref{eq:11}
  implies \autoref{item:1}.
  Composing the surjective morphisms in \autoref{eq:11} for all
  $j$ implies that the natural morphism
  \[
    \xymatrix@C4em{%
      \sfh^{-i}(\dcx {X/S})\simeq \sExt^{-i}_{X}\left(f^*\omega_S, \dcx{X} \right)
      \ar@{->>}[r]^-{\varrho^i} & \sExt^{-i}_{X}\left(f^*\omega_{S_q}, \dcx{X}
      \right) \simeq \sfh^{-i}(\dcx {X_{k}}) }
  \]
  is surjective and hence \autoref{item:2} follows as well.

  By \autoref{eq:10}
  $f^*\left(\factor {\omega_{S_{j+1}}}{\omega_{S_{j}}}\right)\simeq \sO_{X_{k}}$,
  and hence
  $\sExt^{-i}_{X} \left(f^*\left(\factor {\omega_{S_{j+1}}}{\omega_{S_{j}}}\right),
    \dcx{X} \right)\simeq \sfh^{-i}(\dcx {X_{k}})$, so \autoref{eq:11} also implies
  \autoref{item:3}.  

  Composing the injective maps in \autoref{eq:8} for all $j$ shows that the embedding
  $\varsigma: \omega_{S_1}\into \omega_S$ induces an embedding on local cohomology:
  \begin{equation}
    \label{eq:15}
    H^i_x(f^*\omega_{S_1}) \subseteq
    H^i_x(f^*\omega_{S}).
  \end{equation}
  Next we prove \autoref{item:5} for $j=q$ first.  Since
  $\sfh^{-i}(\dcx {X_{q}/S_{q}})$ is supported on $X_{q}$ it follows that
  \[
  I_{q}\sfh^{-i}(\dcx {X/S})\subseteq K:= \ker \sfh^{-i}(\varrho_{q})
  \]
  %
  Recall from \eqref{filt-S} that there exists a $t_{q}\in I_{q}$ such that
  $I_{q}=St_{q}\simeq \factor S\frm$ and from \autoref{lem:I-times-omega} that
  $I_{q}\omega_S=\Soc \omega_S$. It follows that for $j=q$ the short exact sequence
  of \autoref{eq:10} takes the form
  \begin{equation}
    \label{eq:6}
    \xymatrix@C3em{%
      0 \ar[r] & \omega_{S_{q}} \ar[r] & \omega_{S} \ar[r]^-{\tau} &
      \Soc\omega_S \ar[r] & 0,  }
  \end{equation}
  where $\tau:\omega_S\onto\Soc\omega_S\subset \omega_S$ may be identified with
  multiplication by $t_{q}$ on $\omega_S$.  Applying $f^*$ and taking local
  cohomology we obtain the sequence
  \begin{equation}
    \label{eq:14}
    \xymatrix@C3em{%
      0 \ar[r] & H^i_x(f^*\omega_{S_{q}}) \ar[r] & H^i_x(f^*\omega_{S})
      \ar[r]^-{H^i_x(\tau)} & H^i_x\left(f^*\Soc\omega_S \right) \ar[r] & 0, }
  \end{equation}
  which coincides with \autoref{eq:8} for $j=q$, and hence it is exact.  
  Further note that the morphism $H^i_x(\tau)$ may also be identified with
  multiplication by $t_{q}$ on $H^i_x(f^*\omega_S)$.  By \autoref{lem:I-times-omega}
  and \autoref{eq:15}, the natural morphism
  $H^i_x(\varsigma): H^i_x\left(f^*\Soc\omega_S \right) =H^i_x(I_qf^*\omega_S) =
  H^i_x(f^*\omega_{S_1}) \to H^i_x(f^*\omega_S)$ is injective.  Since $H^i_x(\tau)$,
  i.e., multiplication by $t_q$ on $H^i_x(f^*\omega_{S})$, is surjective onto
  $H^i_x\left(f^*\Soc\omega_S \right)$, it follows that
  \begin{equation}
    \label{eq:25}
    \xymatrix@R5em{%
      H^i_x\left(f^*\Soc\omega_S \right) \ar[r]_-{H^i_x(\varsigma)}^-{\simeq} & 
      \im H^i_x(\varsigma)       = I_{q} H^i_x(f^*\omega_{S}) \ar@{^(->}[r] & 
      H^i_x(f^*\omega_{S}),
    }
  \end{equation}
  i.e., $H^i_x\left(f^*\Soc\omega_S \right)$ coincides with
  $I_{q} H^i_x(f^*\omega_{S})$ as submodules of $H^i_x(f^*\omega_{S})$.
  Next let $E$ be an injective hull of $\kappa(x)=\factor{\sO_{X,x}}{\frm_{X,x}}$ and
  consider a morphism $\phi: H^i_x(f^*\Soc\omega_S)\to E$. As $E$ is injective,
  $\phi$ extends to a morphism $\wt\phi: H^i_x(f^*\omega_S)\to E$.  If
  $a\in H^i_x(f^*\omega_S)$, then
  $t_{q}a\in I_{q}H^i_x(f^*\omega_S)=H^i_x\left(f^*\Soc\omega_S \right)$, so
  \[
  t_{q}\wt\phi(a)=\wt\phi(t_{q}a)=\phi(t_{q}a)=\left(\phi\circ
    H^i_x(\tau)\right)(a)
  \]
  Therefore, $\phi\circ H^i_x(\tau)= t_{q}\wt\phi$. 
  Similarly, if $\psi: H^i_x(f^*\omega_S)\to E$ is an arbitrary morphism, then
  setting $\phi=\psi\resto{H^i_x(f^*\Soc\omega_S)}: H^i_x(f^*\Soc\omega_S)\to E$ and
  applying the same computation as above, with $\wt\phi$ replaced by $\psi$, shows
  that $\phi\circ H^i_x(\tau)= t_{q}\psi$.
  It follows that the embedding induced by $H^i_x(\tau)$,
  \begin{equation}
    \label{eq:28}
    \alpha: \Hom_{\sO_{X,x}}(H^i_x(f^*\Soc\omega_S), E)\into
    \Hom_{\sO_{X,x}}(H^i_x(f^*\omega_S), E)
  \end{equation}
  identifies $\Hom_X(H^i_x(f^*\Soc\omega_S), E)$ with
  $I_{q}\Hom_X(H^i_x(f^*\omega_S), E)$. By local duality
  it follows that
  \[
  \left(\factor{\ker \left[ \varrho_{i,q}: \sfh^{-i}(\dcx {X/S})\onto
        \sfh^{-i}(\dcx {X_{q}/S_{q}}) \right]}{I_{q}\sfh^{-i}(\dcx {X/S})}\right)
  \otimes \what{\sO}_{X,x}=0
  \]
  and hence, since completion is faithfully flat, this implies \autoref{item:5} in
  the case $j=q$.  
  Running through the same argument with $S$ replaced by $S_{j+1}$ gives the equality
  in \autoref{item:5} for all $j$.  In addition, \autoref{item:5} for $j=q$ also
  implies \autoref{item:4} for $j\geq q$.
  Assuming that \autoref{item:4} holds for $j=r+1$ implies the isomorphism in
  \autoref{item:5} for $j=r$. In turn, the entire \autoref{item:5} for $j=r$,
  combined with \autoref{item:4} for $j=r+1$, implies \autoref{item:4} for
  $j=r$. Therefore, \autoref{item:5} and \autoref{item:4} follow by descending
  induction on $j$ and then \autoref{item:11} follows from \autoref{item:5} and the
  definition of $\varrho^i$.
\end{proof}

\noin We will also need the following simple lemma from \cite[4.11]{KK20}.

\begin{lem}\label{lem:simple}
  Let $R$ be a ring. $M$ an $R$-module, $t\in R$ and $J=(t)\subseteq R$. Assume that
  $(0:J)_M=(0:J)_R\cdot M$. Then the natural morphism
  $\xymatrix{J\otimes_R M\ar[r]^-\simeq & JM}$ is an isomorphism.
\end{lem}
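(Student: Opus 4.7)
The plan is to replace both sides of the asserted isomorphism with concrete quotient modules of $M$, and then observe that the hypothesis $(0:J)_M=(0:J)_R\cdot M$ makes them identical. Throughout, denote by $\mu: J\otimes_R M\to JM$ the natural multiplication map; it is visibly surjective, so the entire task is to verify injectivity.

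First, I would exploit that $J=(t)$ is cyclic. The $R$-linear surjection $\xymatrix@1{R\ar[r]^-{\cdot t} & J}$ has kernel exactly $(0:t)_R=(0:J)_R$, so it induces an isomorphism of $R$-modules $R/(0:J)_R\isom J$. Tensoring with $M$ and using right exactness of $\blank\otimes_R M$ yields a canonical isomorphism
\[
J\otimes_R M \;\simeq\; M\big/(0:J)_R\cdot M.
\]
On the target side, multiplication by $t$ gives a surjection $M\onto tM=JM$ whose kernel is $(0:t)_M=(0:J)_M$, hence
\[
JM \;\simeq\; M\big/(0:J)_M.
\]

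Next I would check that under these two identifications the map $\mu$ becomes the natural surjection $M/(0:J)_R\cdot M\onto M/(0:J)_M$ induced by the inclusion $(0:J)_R\cdot M\subseteq (0:J)_M$ (which always holds: any $r\in(0:J)_R$ annihilates $t$, so $rm\in M$ is killed by $t$ for every $m$). Tracing the class of $m\in M$ through the chain $M\to R/(0:J)_R\otimes_R M\isom J\otimes_R M\stackrel{\mu}{\to}JM\isom M/(0:J)_M$ indeed returns the class of $tm$ at each step, so the diagram commutes.

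Finally, the hypothesis $(0:J)_M=(0:J)_R\cdot M$ says precisely that the two ideals of $M$ appearing in the denominators coincide, so $\mu$ is the identity between $M/(0:J)_R\cdot M$ and $M/(0:J)_M$, hence an isomorphism. No step here requires any deeper input than right-exactness of tensor product and the cyclicity of $J$; the only place where attention is needed is the compatibility check of $\mu$ with the two quotient presentations, which is the mildest possible obstacle.
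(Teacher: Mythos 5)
Your proof is correct. The paper itself does not reproduce a proof of this lemma — it is cited verbatim from \cite[4.11]{KK20} — so there is nothing to compare against directly, but your argument is the natural one: present $J\simeq R/(0:J)_R$ via multiplication by $t$, tensor with $M$ to get $J\otimes_R M\simeq M/(0:J)_R\cdot M$, present $JM\simeq M/(0:J)_M$ the same way, check that the multiplication map becomes the canonical surjection between these quotients, and observe that the hypothesis collapses the two denominators to the same submodule of $M$. The only place one has to be mildly careful is the compatibility check, which you carried out correctly, and the (standing, if unstated) assumption that $R$ is commutative, which is needed for the containment $(0:J)_R\cdot M\subseteq(0:J)_M$; in the context of this paper that is of course implicit.
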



The the following proposition and its proof is essentially the same as that of
\cite[Prop.~4.12]{KK20}. We include it here because the original situation here is
slightly different from \cite{KK20}, although the difference in the original
situation does not influence anything in this particular proof.

\begin{prop}\label{prop:tensor}
  Using the same notation as above,
  \begin{enumerate}
  \item\label{item:10}
    $I_j\otimes \sfh^{-i}(\dcx {X/S})\simeq I_{j}\sfh^{-i}(\dcx {X/S})$,
  \item\label{item:6} for any $l\in\bN$,
    $\factor{I_j}{I_{j+l}}\otimes \sfh^{-i}(\dcx {X/S})\simeq \factor
    {I_{j}\sfh^{-i}(\dcx {X/S})}{I_{j+l}\sfh^{-i}(\dcx {X/S})}$, and
  \item\label{item:12} for any $l\in\bN$,
    $\factor{\frm^l}{\frm^{l+1}}\otimes \sfh^{-i}(\dcx {X/S})\simeq \factor
    {\frm^l\sfh^{-i}(\dcx {X/S})}{\frm^{l+1}\sfh^{-i}(\dcx {X/S})}$.
  \end{enumerate}
\end{prop}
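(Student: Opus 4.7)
Set $M\leteq\sfh^{-i}(\dcx{X/S})$. The proof follows the blueprint of \cite[Prop.~4.12]{KK20} very closely: the plan is to apply \autoref{lem:simple} to reduce \eqref{item:10} to the identity $(0{:}t_q)_M=\frm M$ (notation from \eqref{filt-S}), to establish this identity by repeating the Matlis-duality argument from the last paragraph of the proof of \autoref{thm:surjectivity}\eqref{item:5}, to bootstrap to arbitrary $I_j$ by descending induction, and to deduce \eqref{item:6} and \eqref{item:12} by formal bookkeeping with short exact sequences.

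I would first handle the case $j=q$ of \eqref{item:10}. Since $I_q=(t_q)$ is principal and $(0{:}I_q)_S=\frm$, \autoref{lem:simple} reduces this to the equality $(0{:}t_q)_M=\frm M$. The inclusion $\frm M\subseteq(0{:}t_q)_M$ is immediate from $t_q\frm\subseteq I_{q+1}=0$. For the converse, factor multiplication by $t_q$ as $M\onto M/\frm M \xrightarrow{\,\widetilde{t_q}\,} t_qM=I_qM$; the claim is equivalent to injectivity of $\widetilde{t_q}$. By \autoref{thm:surjectivity}\eqref{item:11} the source $M/\frm M$ is canonically identified with $\sfh^{-i}(\dcx{X_{k}})$; by \autoref{thm:surjectivity}\eqref{item:3} for $j=q$ the target $I_qM$ is the copy of $\sfh^{-i}(\dcx{X_{k}})$ sitting inside $M$. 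The injective-hull extension-of-functionals computation from the proof of \autoref{thm:surjectivity}\eqref{item:5}, performed now with $\widetilde{t_q}$ in place of $\ker\varrho_{i,q}$, shows that $\widetilde{t_q}\otimes\what{\sO}_{X,x}$ is an isomorphism, so $\widetilde{t_q}$ itself is an isomorphism by faithful flatness of completion.

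The general case of \eqref{item:10} follows by descending induction on $j$. The Matlis-duality argument of the previous paragraph applies verbatim with $(S,M)$ replaced by $\bigl(S_{j+1},\,M/I_{j+1}M\bigr)$ --- which by \autoref{thm:surjectivity}\eqref{item:4} equals $\sfh^{-i}(\dcx{X_{j+1}/S_{j+1}})$ and inherits all hypotheses --- to yield $(0{:}\overline{t_j})_{M/I_{j+1}M}=\frm\cdot (M/I_{j+1}M)$, whence \autoref{lem:simple} gives $(I_j/I_{j+1})\otimes M\simeq I_jM/I_{j+1}M$. Comparing the right-exact sequence $I_{j+1}\otimes M\to I_j\otimes M\to (I_j/I_{j+1})\otimes M\to 0$ with $0\to I_{j+1}M\to I_jM\to I_jM/I_{j+1}M\to 0$ via the natural maps to $M$, the inductive hypothesis $I_{j+1}\otimes M\simeq I_{j+1}M\hookrightarrow M$ forces the top row to be short exact on the left, and the Five Lemma then yields $I_j\otimes M\simeq I_jM$.

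Finally, \eqref{item:6} follows by applying the snake lemma to the analogous diagram with $I_{j+1}$ replaced by $I_{j+l}$, using \eqref{item:10} at both ends; and \eqref{item:12} reduces to \eqref{item:6} after refining the filtration \eqref{filt-S} so that it descends through $\frm\supsetneq\frm^2\supsetneq\cdots$, which is harmless because the only properties used are $I_r/I_{r+1}\simeq k$ and the existence of generators $t_r$. The only genuinely delicate step is the Matlis-duality identity $(0{:}t_q)_M=\frm M$; everything else is routine diagram chasing.
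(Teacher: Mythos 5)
Your proposal is correct and follows essentially the same route as the paper's proof: reduce \autoref{item:10} to the equality $(0:t_q)_M=\frm M$ via \autoref{lem:simple}, establish that equality by the Matlis-duality computation, bootstrap by induction, and deduce \autoref{item:6} and \autoref{item:12} formally from right-exactness of tensor and a choice of filtration refining $\frm\supsetneq\frm^2\supsetneq\cdots$. The only cosmetic differences are that the paper runs the induction on the length of $S$ (invoking \autoref{item:10} for $S_q$ to pass to $S$) rather than descending on $j$, and makes the base case fully explicit by identifying $\varrho^i$ with multiplication by $t_q$, so that \autoref{thm:surjectivity}\autoref{item:11} directly yields $(0:t_q)_M=\ker\varrho^i=\frm M$.
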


\begin{proof}
  Notice that since $H^i_x(f^*\Soc\omega_S)$ is both a quotient and a submodule of
  $H^i_x(f^*\omega_S)$, there are two natural maps between
  $\Hom_{\sO_{X,x}}(H^i_x(f^*\Soc\omega_S), E)$ and
  $\Hom_{\sO_{X,x}}( H^i_x(f^*\omega_S), E)$.  Regarding $H^i_x(f^*\Soc\omega_S)$ a
  quotient module via $H^i_x(\tau)$ we get the embedding
  $\alpha=(\blank)\circ H^i_x(\tau)$ in \autoref{eq:28}, and considering it a
  submodule the restriction map
  \begin{equation*}
      \xymatrix@R0em{%
        \beta: \Hom_{\sO_{X,x}}( H^i_x(f^*\omega_S), E) \ar[r] &
        \Hom_{\sO_{X,x}}(H^i_x(f^*\Soc\omega_S), E). \\
        \phi \ar@{|->}[r] & \phi\resto{H^i_x(f^*\Soc\omega_S)} }
  \end{equation*}
  These maps are of course not inverses to each other. In fact, we have already
  established (cf.~\autoref{eq:28}) that
  $\phi\resto{H^i_x(f^*\Soc\omega_S)}\circ H^i_x(\tau)= t_{q}\phi$ and hence the
  composition $\alpha\circ\beta$ is simply multiplication by $t_q$:
  \begin{equation}
    \label{eq:24}
    \begin{aligned}
      \xymatrix{%
        & %
        \ar@{}[l]^(.675){}="a" %
        \ar@{} "a";[d]^(.05){}="b"%
        \ar[rd]^(.55){\alpha\circ\beta}
        \phi\in \Hom_{\sO_{X,x}}(H^i_x(f^*\omega_S), E) \ar[r]^-\beta &
        \Hom_{\sO_{X,x}}(H^i_x(f^*\Soc\omega_S), E) \ar[d]^-\alpha_-\simeq \\
        && %
        \ar@{}[l]^(.45){}="c" %
        \ar@{} "c";[u]^(.15){}="d"%
        \ar@/_1em/@{|->} "b";"c" %
        t_q \phi\in I_q\Hom_{\sO_{X,x}}(H^i_x(f^*\omega_S), E). }
    \end{aligned}
  \end{equation}
  This implies, (cf.~\autoref{eq:15} and \autoref{eq:25}), that $\varrho^i$ may be
  identified with multiplication by $t_q$ on $\sfh^{-i}(\dcx {X/S})$.  Together with
  \autoref{thm:surjectivity}\autoref{item:11} this implies that
  \[
    (0:I_q)_{\sfh^{-i}(\dcx {X/S})} =\ker\varrho^i= \frm \sfh^{-i}(\dcx {X/S}) =
    (0:I_q)_S\cdot \sfh^{-i}(\dcx {X/S}),
  \]
  and hence the natural morphism
  \begin{equation}
    \label{eq:26}
    \xymatrix{%
      I_q\otimes_S \sfh^{-i}(\dcx {X/S}) \ar[r]^-\simeq & I_q\sfh^{-i}(\dcx {X/S}) 
    }
  \end{equation}
  is an isomorphism by \autoref{lem:simple}.
  Now assume, by induction, that \autoref{item:10} holds for $S_q$ in place of
  $S$. In particular, keeping in mind that $S_q=\factor S{I_q}$, the natural map
  \begin{equation}
    \label{eq:30}
    \xymatrix{%
      \factor{I_{j}}{I_{q}}\otimes_{S_q} 
      \sfh^{-i}(\dcx {X_q/S_q})\ar[r]^-\simeq & 
      \left(\factor{I_{j}}{I_{q}}\right)\sfh^{-i}(\dcx {X_q/S_q}) 
    }
  \end{equation}
  is an isomorphism for all $j$.
  Consider the \ses (cf.~\autoref{thm:surjectivity}\autoref{item:4}),
  \begin{equation*}
    \xymatrix{%
      0 \ar[r] & I_q \sfh^{-i}(\dcx {X/S}) 
      \ar[r] & \sfh^{-i}(\dcx {X/S}) \ar[r] & 
      \sfh^{-i}(\dcx {X_q/S_q})  \ar[r] &  0
    }
  \end{equation*}
  and apply $\factor{I_{j}}{I_{q}}\otimes_{S} (\blank)$.  The image of
  $\factor{I_{j}}{I_{q}}\otimes_{S} I_q \sfh^{-i}(\dcx {X/S})$ in
  $\factor{I_{j}}{I_{q}}\otimes_{S} \sfh^{-i}(\dcx {X/S})$ is $0$ and hence by
  \autoref{eq:30} the natural map
  \begin{multline*}
    \xymatrix{%
      \boxed{\factor{I_{j}}{I_{q}}\otimes_{S} \sfh^{-i}(\dcx {X/S})} \simeq
      \factor{I_{j}}{I_{q}}\otimes_{S_q} \sfh^{-i}(\dcx {X_q/S_q}) \ar[r]^-\simeq &
      \left(\factor{I_{j}}{I_{q}}\right) \sfh^{-i}(\dcx{X_q/S_q})
      \simeq }\\
    \xymatrix{%
      \ar@{}[r] & \simeq \left(\factor{I_{j}}{I_{q}}\right)
      \factor{\sfh^{-i}(\dcx{X/S})}{I_q \sfh^{-i}(\dcx {X/S})} \simeq
      \boxed{\factor{I_j\sfh^{-i}(\dcx{X/S})}{I_q \sfh^{-i}(\dcx {X/S})}}.  }
  \end{multline*}
  is an isomorphism.  This, combined with \autoref{eq:26} and the 5-lemma, implies
  \autoref{item:10}. Then \autoref{item:6} is a direct consequence of
  \autoref{item:10} and the fact that tensor product is right exact.
  
  Finally, recall, that the choice of filtration in \eqref{filt-S} was fairly
  unrestricted. In particular, we may assume that the filtration $I_\kdot$ of $S$ is
  chosen so that for all $l\in \bN$, there exists a $j(l)$ such that
  $I_{j(l)}=\frm^l$. Applying \autoref{item:6} for this filtration implies
  \autoref{item:12}.
\end{proof}

\noin The following theorem is an easy combination of the results of this section.

\begin{thm}\label{thm:key}
  Let $(S,\frm,k)$ be an Artinian local ring, $(X,x)$ a local  scheme of
  dimension $n$, and $f:(X,x)\to (\Spec S,\frm)$ a local morphism. Assume that $f$ is
  flat in codimension $t-1$ and that $(X_{k}, x)$, where $X_{k}$ is the fiber of $f$
  over the closed point of $\Spec S$, has \toploccohst over $S$.  Then for each
  $i>n-t$, $\sfh^{-i}(\dcx {X/S})$ is flat over $\Spec S$.
  In particular, if $t>0$, then $\omega_{X/S}$ is flat over $\Spec S$ and commutes
  with arbitrary base change. 
\end{thm}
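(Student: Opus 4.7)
The plan is to deduce flatness of $M\leteq\sfh^{-i}(\dcx{X/S})$ for $i>n-t$ from the results of \autoref{sec:filtrations} and \autoref{sec:db-families-over} by invoking the standard flatness criterion: an $S$-module is flat \ifft the natural map $J\otimes_SM\to M$ is injective for every ideal $J\subseteq S$, equivalently $\Tor_1^S(S/J,M)=0$ for every $J$.

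First, observe that the choice of filtration of type \eqref{filt-S} was entirely arbitrary, and since $S$ is Artinian, any ideal $J\subseteq S$ can be placed as one of the terms $I_j$ in such a filtration: simply refine the chain $S\supsetneq J\supsetneq 0$ to a maximal one, which is possible because every quotient of an Artinian local ring has finite length. Applying \autoref{prop:tensor}\autoref{item:10} to this refined filtration gives the isomorphism $J\otimes_SM\simeq JM$ as submodules of $M$, and in particular the injectivity of $J\otimes_SM\to M$. Since $J$ was an arbitrary ideal, the flatness criterion yields that $M$ is flat over $S$. This is the main step, and in some sense the only conceptual one; all the delicate homological algebra has been absorbed into the preceding section.

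For the ``in particular'' clause, specialize to $i=n$, which is permitted since $t\geq 1$ forces $n>n-t$. This gives flatness of $\omega_{X/S}\simeq \sfh^{-n}(\dcx{X/S})$. To see that $\omega_{X/S}$ commutes with arbitrary base change, by \cite[9.17]{ModBook} it suffices to check the statement for Artinian local bases, and then by refining any surjection $S\onto T$ to a filtration of type \eqref{filt-S} one reduces further to verifying base change along each step $S\onto S_j$. But this case is precisely \autoref{thm:surjectivity}\autoref{item:4} specialized to $i=n$:
\[
\omega_{X_j/S_j}\simeq \omega_{X/S}\otimes_{\sO_X}\sO_{X_j}\simeq \omega_{X/S}\otimes_SS_j.
\]
The only ``obstacle'' worth flagging is the recognition that \autoref{prop:tensor}\autoref{item:10}, though stated for one filtration, actually applies to every ideal—once this is seen, the flatness criterion closes the argument immediately.
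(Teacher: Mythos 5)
Your proof is essentially correct and runs on the same machinery as the paper's, with minor variations in which items of the key propositions are invoked. For flatness the paper uses \autoref{prop:tensor}\autoref{item:12} (the graded isomorphism $\frm^l/\frm^{l+1}\otimes M\simeq \frm^lM/\frm^{l+1}M$) together with the local criterion of flatness \cite[Tag 0AS8]{stacks-project}, whereas you use \autoref{prop:tensor}\autoref{item:10} plus the observation that any ideal $J$ can be inserted into a composition series of $S$, then conclude via the ideal criterion $J\otimes_SM\into M$. This is a legitimate shortcut: it exploits the same freedom in the choice of filtration that the paper itself uses in the final step of the proof of \autoref{prop:tensor}, and sidesteps the appeal to Tag 0AS8. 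For the base change clause the paper combines \autoref{thm:surjectivity}\autoref{item:2} with \cite[9.17]{ModBook}, while you invoke \autoref{thm:surjectivity}\autoref{item:4} directly to get the isomorphism $\omega_{X_j/S_j}\simeq\omega_{X/S}\otimes_SS_j$ for any quotient; both are valid given 9.17. One minor point worth tightening: your phrase ``reduces further to verifying base change along each step $S\onto S_j$'' is slightly roundabout—\autoref{item:4} already gives the isomorphism for an arbitrary quotient $S\onto T$ simply by choosing a filtration in which $T=S_j$; there is no need to iterate along the steps.
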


\begin{proof}
  Flatness follows from \autoref{prop:tensor}\autoref{item:12} and
  \cite[\href{http://stacks.math.columbia.edu/tag/0AS8}{Tag 0AS8}]{stacks-project}.
  If $t>0$, then this implies that $\omega_{X/S}$ is flat over $\Spec
  S$. Furthermore, it commutes with arbitrary base change by
  \autoref{thm:surjectivity}\autoref{item:2} and \cite[9.17]{ModBook}.
\end{proof}

\section{\DB \sings and liftable local cohomology}\label{sec:db-sings-liftable}

In this section we prove a criterion for a local scheme to have \toploccohst.  As
before, $\bH^i_x$ denotes $\myR^i\Gamma_x$, the $i^\text{th}$ derived functor of
$\Gamma_x$, the functor of sections with support at $x$, i.e., the $i^\text{th}$
local cohomology functor with support at $x$ on the derived category of
quasi-coherent sheaves on $X$.

\begin{lem}\label{lem:Matlis}
  Let $(X,x)$ be a local scheme of dimension $n$ which is essentially of finite type
  over a 
  field of characteristic $0$. Then $H^i_x(\sO_X)\to \bH^i_x(\Om_X^0)$ is surjective
  for each $i\in\bZ$.
\end{lem}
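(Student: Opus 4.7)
The plan is to apply Matlis duality (as the lemma's name suggests) to convert the claimed surjectivity on local cohomology into an injectivity statement on the Grothendieck-dual side, where the structure of $\Om_X^0$ in characteristic zero allows for a direct verification via a hyperresolution.

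Since the statement is local at $x$, one may pass to the completion $\widehat R:=\widehat\sO_{X,x}$, which preserves both local cohomologies. Let $E$ denote an injective hull of $\kappa(x)$ over $\widehat R$ and write $(\blank)^\vee:=\Hom_{\widehat R}(\blank,E)$ for the Matlis dual. Both $H^i_x(\sO_X)$ and $\bH^i_x(\Om_X^0)$ are Artinian $\widehat R$-modules (the latter because $\Om_X^0\in D^b_{\coh}(X)$), so Matlis duality is exact and faithful on them, and identifies the surjectivity of
\[
H^i_x(\sO_X)\longrightarrow \bH^i_x(\Om_X^0)
\]
with the injectivity of the dual map $\bH^i_x(\Om_X^0)^\vee \hookrightarrow H^i_x(\sO_X)^\vee$. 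Grothendieck local duality then identifies $H^i_x(\sO_X)^\vee$ with the completion of $\sfh^{-i}(\dcx X)_x$, and $\bH^i_x(\Om_X^0)^\vee$ with the completion of $\sfh^{-i}\myR\sHom_X(\Om_X^0,\dcx X)_x$, the dual map being induced by applying $\myR\sHom_X(\blank,\dcx X)$ to the natural morphism $\sO_X\to\Om_X^0$. It thus suffices to prove that
\[
\sfh^{-i}\myR\sHom_X(\Om_X^0,\dcx X)\longrightarrow \sfh^{-i}(\dcx X)
\]
is injective on stalks at $x$ for every $i\in\bZ$.

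Because $\kar k=0$, Hironaka's resolution of singularities provides a cubical hyperresolution $\varepsilon_\bullet\colon X_\bullet\to X$ with each $X_p$ smooth and $\Om_X^0\simeq \myR\varepsilon_*\sO_{X_\bullet}$. Applying Grothendieck duality componentwise yields a quasi-isomorphism $\myR\sHom_X(\Om_X^0,\dcx X)\simeq \myR\varepsilon_*\dcx{X_\bullet}$, under which the displayed map becomes the simplicial trace $\myR\varepsilon_*\dcx{X_\bullet}\to\dcx X$. The required injectivity of cohomology sheaves is the dual, local version of the classical Grauert--Riemenschneider-type injectivity for smooth hyperresolutions in characteristic zero, and it follows by assembling the injectivity on each individual smooth component $X_p$ via the descent spectral sequence attached to the simplicial object $X_\bullet$.

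The main obstacle is controlling this spectral sequence so that injectivity on each $X_p$ actually survives to injectivity for $\myR\varepsilon_*\dcx{X_\bullet}\to\dcx X$; this is handled by the degeneration results of Deligne--Du~Bois in characteristic zero and is essentially the content of \cite[Lemma~2.2]{Kovacs99} cited in \autoref{rem:DB}, which is the global-to-local transcription of the surjectivity in \autoref{eq:star}.
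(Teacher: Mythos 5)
The overall reduction via Matlis duality is the same as the paper's: the paper proves the lemma by citing \cite[Lemma~3.2]{MSS17}, whose Matlis dual is exactly the claimed surjectivity. Your first two steps (Matlis duality and Grothendieck local duality) correctly transform the claim into the statement that $\sfh^{-i}\myR\sHom_X(\Om_X^0,\dcx X)\to\sfh^{-i}(\dcx X)$ is injective on stalks, which is the content of the cited result.

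The gap is in your attempt to then reprove that injectivity from scratch by assembling it from the smooth pieces of a hyperresolution. The claim that ``injectivity on each individual smooth component $X_p$'' can be ``assembled via the descent spectral sequence'' is not a valid inference: the descent spectral sequence for the cosimplicial object $\{\myR\varepsilon_{p*}\dcx{X_p}\}$ has nontrivial differentials, and there is no general principle by which injectivity of a map on each $E_1$ column implies injectivity of an edge map on the abutment. In fact what one actually needs here is much stronger than componentwise Grauert--Riemenschneider: one needs a splitting in the derived category of the composition $\sO_X\to\Om_X^0\to\myR\pi_*\sO_{\widetilde X}$ on the source side (equivalently a retraction on the dual side), and this is a genuine Hodge-theoretic input. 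Pointing to ``the degeneration results of Deligne--Du~Bois'' does not identify which statement is being used or why it yields the needed splitting; the actual arguments in \cite{Kovacs99}, \cite{MR3617778}, \cite{KS13}, \cite{MSS17} rely either on Kov\'acs's splitting lemma applied to a carefully chosen factorization, or on strictness of morphisms of mixed Hodge modules in Saito's theory. Without one of these inputs made explicit, the last paragraph of your argument does not close, and this is precisely the nontrivial content that the paper outsources to \cite[Lemma~3.2]{MSS17}. One smaller caveat: the identification $\myR\sHom_X(\Om_X^0,\dcx X)\simeq\myR\varepsilon_*\dcx{X_\bullet}$ needs care about what the right-hand side means (it is a totalization of a co-cubical or cosimplicial object, with the arrows reversed relative to the source side); this is not fatal but should be stated precisely before a spectral sequence attached to it can be invoked.
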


\begin{proof}
  This follows by applying Matlis duality to the map in \cite[Lemma~3.2]{MSS17}
  (cf.~\cite[Lemma~2.2]{Kovacs99}, \cite[Theorem~3.3]{MR3617778},
  \cite[Theorem~3.2]{KS13}, \cite[Lemma~3.3]{MSS17}).
\end{proof}

\begin{thm}\label{thm:DB-to-llc}
  Let $(X,x)$ be a local scheme of dimension $n$ which is essentially of finite type
  over a 
  field of characteristic $0$.  Fix $t\in\bN, t>0$, and let $Z\subseteq X$ be a
  closed subset of codimension $t+2$. Further let $\sigma: Y\to X$ be
  an affine morphism which is
  an isomorphism over $U\leteq X\setminus Z$.  Assume that $Y$ is \DB.
  %
  Then
  \begin{enumfull}
  \item\label{item:7} $H^i_x(\sO_X)\to \bH^i_x(\Om_X^0)$ is an isomorphism for
    $i\geq n-t$, and
  \item\label{item:8} $X$ has \toploccohst.
  \end{enumfull}
\end{thm}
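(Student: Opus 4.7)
The plan is to prove the two claims in turn, using the codimension hypothesis together with the \DB property of $Y$, the functoriality of the \DB complex, and its invariance under nilpotent thickenings. Throughout, I work with the natural map $\alpha\colon\sO_X\to\sigma_*\sO_Y$ and its \DB analogue.

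For \sfref{item:7}, first I consider $\alpha$. Because $\sigma$ is affine, $\myR\sigma_*=\sigma_*$, and because $\sigma$ is an isomorphism over $U$, the mapping cone of $\alpha$ has cohomology sheaves supported on $Z$. Since $\codim(Z,X)\geq t+2$ gives $\dim Z\leq n-t-2$, Grothendieck vanishing implies that $H^j_x$ of this cone vanishes for $j>n-t-2$, so the long exact sequence in local cohomology yields $H^i_x(\sO_X)\simeq H^i_x(\sigma_*\sO_Y)$ for $i\geq n-t$. Next, functoriality of the \DB complex produces a natural map $\Om^0_X\to\myR\sigma_*\Om^0_Y=\sigma_*\Om^0_Y\qis\sigma_*\sO_Y$, where the last $\qis$ uses that $Y$ is \DB. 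Since $U$ is open in $Y$ it is also \DB, so this map is an isomorphism over $U$; applying the same codimension argument to its cone gives $\bH^i_x(\Om^0_X)\simeq\bH^i_x(\sigma_*\sO_Y)$ for $i\geq n-t$. The commutative square arising from $\sO_X\to\Om^0_X\to\sigma_*\sO_Y$ then identifies the natural map $H^i_x(\sO_X)\to\bH^i_x(\Om^0_X)$ with the composition of these two isomorphisms, so it is an isomorphism in the stated range.

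For \sfref{item:8}, let $R$ be a noetherian local ring with nilpotent ideal $I$ satisfying $R/I\simeq T=\sO_{X,x}$, and let $\iota\colon X\hookrightarrow X'\leteq\Spec R$ be the associated closed nilimmersion. The key input is the nilpotent invariance $\Om^0_{X'}\qis\iota_*\Om^0_X$, which follows from the hyperresolution construction of $\Om^0$: any smooth hyperresolution of $X'$ has reduced terms and so automatically factors through $X$. After reducing to the case where $R$ itself is essentially of finite type over the base field (via the Cohen structure theorem and faithful flatness of completion), \autoref{lem:Matlis} applied to $X'$ yields the surjection $H^i_{\frm}(R)\onto\bH^i_{\frm}(\Om^0_{X'})$ for each $i$. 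For $i\geq n-t$, nilpotent invariance gives $\bH^i_{\frm}(\Om^0_{X'})\simeq\bH^i_{\frn}(\Om^0_X)$, which by \sfref{item:7} is isomorphic to $H^i_{\frn}(T)$. A check by functoriality that the resulting composite agrees with the natural map induced by $R\onto T$ then delivers the required surjectivity $H^i_{\frm}(R)\onto H^i_{\frn}(T)$.

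The main obstacle is the pair of technical points behind \sfref{item:8}: establishing the nilpotent invariance $\Om^0_{X'}\qis\iota_*\Om^0_X$ in sufficient generality, and arranging the reduction to essentially-of-finite-type lifts so that \autoref{lem:Matlis} can be applied to the abstract thickening $X'$. Once both are in hand, the theorem reduces to diagram chasing in local cohomology, with \sfref{item:7} playing the role of the computational input that identifies the two sides.
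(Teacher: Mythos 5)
Your treatment of \sfref{item:8} matches the paper's: a commutative square comparing $H^i_\frm(R)$, $H^i_x(\sO_X)$, $\bH^i_\frm(\Om^0_{X'})$, and $\bH^i_x(\Om^0_X)$, with the surjectivity supplied by \autoref{lem:Matlis} applied to $X'$, the bottom isomorphism by the invariance of $\Om^0$ under nilpotent thickenings, and the right isomorphism by \sfref{item:7}. The Cohen-structure reduction you flag is also implicit in the paper via \autoref{rem:inheriting-top-loc-cohs}.

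Your argument for \sfref{item:7}, however, has a genuine gap at the step ``applying the same codimension argument to its cone gives $\bH^i_x(\Om^0_X) \simeq \bH^i_x(\sigma_*\sO_Y)$ for $i \geq n-t$.'' The codimension argument works for $\alpha \colon \sO_X \to \sigma_*\sO_Y$ because the cone of $\alpha$ lives in degrees $\{-1,0\}$ with cohomology sheaves supported on $Z$, so Grothendieck vanishing directly gives $H^j_x=0$ for $j>n-t-2$. The cone $C$ of $\Om^0_X \to \sigma_*\Om^0_Y$ is a bounded complex of possibly large amplitude: since $\sigma_*\Om^0_Y \qis \sigma_*\sO_Y$ sits in degree $0$, the long exact sequence of the triangle gives $h^q(C) \simeq h^{q+1}(\Om^0_X)$ for every $q\geq 1$, and there is no a priori reason for $h^j(\Om^0_X)$ to vanish for $j\geq 2$. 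In the spectral sequence $E^{p,q}_2 = H^p_x(h^q(C)) \Rightarrow \bH^{p+q}_x(C)$, the support condition on $Z$ kills $E^{p,q}_2$ only for $p > n-t-2$, leaving potentially nonzero contributions with $q\geq 1$ and $p \leq n-t-2$ in total degree $p+q\geq n-t-1$. So you cannot conclude the vanishing of $\bH^j_x(C)$ in the range you would need to make the bottom horizontal arrow an isomorphism, and the argument does not close as written.

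The paper's proof avoids exactly this: it establishes only that the diagonal $H^i_x(\sO_X)\to\bH^i_x(\sigma_*\Om^0_Y)$ is an isomorphism for $i\geq n-t$ (the top arrow is an isomorphism because its cone is a sheaf; the right arrow is one because $Y$ is \DB), which gives merely that the left arrow $H^i_x(\sO_X)\to\bH^i_x(\Om^0_X)$ is \emph{injective} in that range. It then invokes \autoref{lem:Matlis} to supply the surjectivity and upgrade injectivity to an isomorphism. Note that your plan invokes \autoref{lem:Matlis} only in \sfref{item:8}; the paper uses it in \sfref{item:7} as well, and that surjectivity is precisely the ingredient missing from your argument.
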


\begin{proof}
  Let $W=\sigma^{-1}(x)\subseteq Y$ and observe that there is an equality of
  functors:
  \[
    \Gamma_{x}\circ \sigma_* = \Gamma_W.
  \]
  As $\sigma$ is an affine, morphism, $\sigma_*$ is exact, we obtain an equality of
  derived functors:
  \begin{equation}\addtocounter{equation}{2}
    \label{eq:22}
    \myR\Gamma_{x}\circ \sigma_* = \myR\Gamma_W.
  \end{equation}
  Consider the \ses
  \[
    \xymatrix{%
      0\ar[r] & \sO_X \ar[r] & \sigma_*\sO_Y \ar[r] & \sQ \ar[r] & 0,
    }
  \]
  where $\sQ$ is defined as the cokernel of the first non-zero morphism in this
  \ses.  Applying the functor $\myR\Gamma_x$, and taking into
  account \autoref{eq:22}, we obtain the following distinguished triangle:
  \[
    \xymatrix{%
      \myR\Gamma_x\sO_X\ar[r] & \myR\Gamma_W\sO_Y \ar[r] & \myR\Gamma_x\sQ
      \ar[r]^-{+1} & }
  \]

  The assumption implies that $\sQ$ is supported on $Z$, so $H^i_x(\sQ)=0$ for
  $i>n-t-2$, and hence
  \begin{equation}
    \label{eq:29}
    H^i_x(\sO_X)\simeq H^i_W(\sO_Y) \quad \text{ for $i\geq n-t$.}
  \end{equation}

  Next, consider the following diagram:
  \begin{equation*}
    \begin{aligned}
      \xymatrix@C7em{%
        \sO_X \ar[d]  \ar[r] & \myR\sigma_*\sO_Y \ar[d] \\
        \Om_X^0 \ar[r] & \myR\sigma_*\Om_Y^0. }
    \end{aligned}
  \end{equation*}
  Applying $\myR\Gamma_x$ to each element 
  and using \autoref{eq:22} and \autoref{eq:29} leads to the following:
  \begin{equation}
    \label{eq:13}
    \begin{aligned}
      \xymatrix@C7em{%
        H^i_x(\sO_X) \ar@{->>}[d] \ar[r]_-{\text{(for $i\geq n-t$)}}^-{\simeq} &
        H^i_W(\sO_Y) \ar[d]^\simeq \\
        \bH^i_x(\Om_X^0) \ar[r] & \bH^i_W(\Om_Y^0) }
    \end{aligned}
  \end{equation}
  The top horizontal arrow is an isomorphism for $i\geq n-t$ and the right vertical
  arrow is an isomorphism for all $i$, because $Y$ is \DB.  It follows that the
  diagonal map is also an isomorphism, and in particular, injective for $i\geq n-t$.
  In particular the left vertical arrow is also injective for $i\geq n-t$. It is
  surjective for each $i$ by \autoref{lem:Matlis} and hence an isomorphism for
  $i\geq n-t$. This proves \autoref{item:7}.

  Let $(R,\frm)$ be a noetherian local ring and $I\subset R$ a nilpotent ideal such
  that $R/I\simeq \sO_{X,x}$. In order to prove \autoref{item:8} we need that the
  induced natural morphism on local cohomology 
  \begin{equation}
    \label{eq:27}
        \xymatrix{%
      H^i_{\frm}(R) \ar@{->>}[r] & H^i_{x}(\sO_X) }
  \end{equation}
  is surjective for $i\geq n-t$. Let $X'\leteq \Spec R$ and consider the following
  diagram:
  \[
    \qquad\qquad\xymatrix@C5em{%
      H^i_{\frm} (R) \ar[r] \ar@{->>}[d] & H^i_x(\sO_X) \ar[d]^{\text{ (for
          $i\geq n-t$ by \ref{item:7})}}_{\simeq}  \\
      \bH^i_{\frm}(\Om_{X'}^0) \ar[r]_-\simeq & \bH^i(\Om_X^0) }
  \]
  As above, the left vertical arrow is a surjection by \autoref{lem:Matlis}. The
  bottom horizontal arrow is an isomorphism, because $X'_{\red}\simeq X_{\red}$ and
  $\Om^0$ only depends on the reduced structure by definition, 
  cf.~\cite[p.2150]{MSS17}. Finally, the right vertical arrow is an isomorphism  for
  $i\geq n-t$ by \autoref{item:7} and the combination of these implies
  \autoref{eq:27} and hence \autoref{item:8}.
\end{proof}

\begin{proof}[Proof of \autoref{thm:main-artin}]
  It follows from \autoref{thm:DB-to-llc} that the assumptions of
  \autoref{thm:main-artin} imply those of \autoref{thm:key}, which in turn implies
  the desired statement of \autoref{thm:main-artin} if $S$ is Artinian.

  If $\omega_{X/B}$ is known to commute with base changes, then one can check
  flatness over Artin subschemes of $B$ by the local criterion of flatness.
  
  The general case follows from \cite[9.17]{ModBook}, which is a variant of the local
  criterion of flatness, combined with obstruction theory.\end{proof}


\begin{proof}[Proof of \autoref{codim.3.artin.thm}]
  We may assume that $B$ is a local scheme with closed point $b\in B$.  We will
  consider three, increasingly more general cases.

  \noin{\sf\small Case I}: \emph{$\Delta=0$ and $\omega_{\wt X_b}$ is locally free,}
  where $\pi:\wt X_b\to X_b$ is the demi-normalization as in
  \autoref{item:1.3.4.jk}.
  

  Note that $\omega_{X/B}$ is flat and commutes with arbitrary base change by
  \autoref{thm:main-artin}.
  By further localization we may assume that $\omega_{\wt X_b}$ is free. Since
  $\omega_{ X_b}\simeq \pi_*\omega_{\wt X_b}$ by \autoref{lem.1.1.jk}, we see that
  $\omega_{X/B}$ has a section $\sigma$ such that $\sigma_b$ does not vanish on
  $U_b$, hence $\sigma: \sO_{X}\to \omega_{X/B}$ is an isomorphism away from a closed
  subset $W$ for which $W_b\subset Z_b$.  In particular, $\depth_{W_b}\sO_X\geq 2$ by
  \autoref{item:15}.  Now we use the easy \cite[Lem.10.6]{ModBook} to conclude that
  $ \sO_{X}\simeq \omega_{X/B}$.  Thus $g$ is flat, $\omega_{X/B}$ is locally free,
  and so are all of its powers.


  \noin{\sf\small Case II}: \emph{$\Delta=D$ is a $\bZ$-divisor and
    $\omega_{\wt X_b}(\wt D_b)$ is locally free}.  Note that
  $\sO_U(-D)\simeq \omega_{U/B}$ is flat over $B$ and commutes with base changes by
  assumption. Thus \autoref{glue.lem.jk} applies, and so $\omega_{X/B}(D)$ is flat
  over $B$ and commutes with base changes.

  We may assume that $\omega_{\wt X_b}(\wt D_b)$ is free with generating section
  $\wt\sigma_b$. By \autoref{lem.1.1.jk} we can identify $\wt\sigma_b$ with a section
  $\sigma_b$ of $\omega_{X_b}(D_b)$. By flatness it lifts to
  $\sigma: \sO_X\to \omega_{X/B}(D)$, which is an isomorphism over $U$.  By
  \autoref{item:15} (and the easy \cite[10.6]{ModBook}) $\sigma$ is an
  isomorphism. Thus $\omega_{X/B}(D)$ is locally free and so are its powers.

  \noin{\sf\small Case III}: \emph{The general case}.
  We may assume that $X$ is local, and
  by \cite[9.17]{ModBook} it is sufficient to prove the case when $B$ is Artinian.

  Write  $\Delta=\sum_{i\in I} a_iD_i$, where  $a_i= 1-\frac1{i}$, 
$I\subset \{2, 3,4, \dots, \infty\}$ is a finite subset and the $D_i$ are reduced divisors.

 Choose $m>0$ such that $\omega_{U_b}^{[m]}(m\Delta_b)\sim \sO_{U_b}$.
The kernel of  $\pic(U)\to \pic(U_b)$ is a $k$-vectorspace; hence divisible and torsion free.  
Thus there is a unique line bundle 
  $L_U$ on $U$ such that $L_{U_b}\sim \sO_{U_b}$ and $\omega_{U/B}^{[m]}(m\Delta)\hotimes L^{m}_U\sim \sO_U$. Let $L$ be the push-forward of $L_U$ to $X$. Take the corresponding 
  cyclic cover
  \[
    \pi: Y:=\spec_X \tsum_{j=0}^{m-1} \omega_{X/B}^{[j]}\bigl(\tsum_i
    \rdown{ja_i}D_i\bigr)\hotimes L^{[j]} \to X.
    \]
    Note that $\pi$ ramifies along the $D_i$ as follows.  If $i\geq 3$, then $\pi$
    has ramification index $i$ along $D_i$, and $\pi$ is unramified along
    $D_{\infty}$.  The $i=2$ case is somewehat special.  Then $\pi_b$ has
    ramification index $2$ along an irreducible divisor $F_b\subset X_b$ if it has
    multiplicity 1 in $D_2|_b$, and $Y_b$ is nodal along $\pi_b^{-1}(F_b)$ if $F_b$
    has multiplicity 2 in $D_2|_b$. Thus
    \[
    K_{Y_b}+\pi_b^*D_{\infty}\sim_{\bQ}
    \pi_b^*\bigl(K_{X_b}+\Delta_b\bigr).
    \]    
  In particular,  $(Y, \pi^*D_{\infty} )\to B$ satisfies the assumptions
  \autoref{item:23}-\autoref{item:16}.
  (Note that $Y\to B$ is known to be flat only over $U$, so requiring flatness only in codimension $\leq 2$ is essential here.)

  By duality, we get that
  \[
  \begin{array}{lcl}
    \pi_* \omega_{Y/B}(\pi^*D_{\infty} )&\simeq& \tsum_{j=0}^{m-1} \omega_{X/B}^{[1-j]}
    \bigl(D_{\infty}-\tsum_i \rdown{ja_i}D_i\bigr)\hotimes L^{[-j]},
    \qtq{and}\\
    (\pi_b)_* \omega_{\wt Y_b}(\pi_b^*D_{\infty} )&\simeq& \tsum_{j=0}^{m-1} \omega_{X_b}^{[1-j]}
    \bigl(D_{\infty}|_b-\tsum_i \rdown{ja_i}D_i|_b\bigr)\hotimes L^{[-j]}_b.
    \end{array}
  \]
  The $j=1$ summand of $(\pi_b)_* \omega_{\wt Y_b}(\pi_b^*D_{\infty} )$ is trivial. Thus $\omega_{\wt Y_b}(\pi_b^*D_{\infty} )$ has a section that is nowhere zero on $U_b$, so  $\omega_{\wt Y_b}(\pi_b^*D_{\infty} )$ is trivial. The previous case applies, and we conclude that all the
  \[
  \omega_{X/B}^{[1-j]}
  \bigl(D_{\infty}-\tsum_i \rdown{ja_i}D_i\bigr)\hotimes L^{[-j]}
  \]
  are flat over $B$ and commute with base changes.
  
  The $j=1$ summand is $L^{[-1]}$,
  whose restriction to $X_b$ is trivial.  By flatness, the constant 1 section of
  $L^{[-1]}\resto{X_b}$ lifts to a section of $L^{[-1]}$, hence $L$ is trivial.

  Now fix $0\leq r<m$ and set $1-j=r-m$. Then we get that
  \[
  \omega_{X/B}^{[r]} \bigl(D_{\infty}+\tsum_i (ma_iD_i-\rdown{(m-r+1)a_i})D_i\bigr)
  \simeq
  \omega_{X/B}^{[1-j]}
  \bigl(D_{\infty}-\tsum_i \rdown{ja_i}D_i\bigr)\hotimes L^{[-j]}
  \]
  is flat over $B$ and commutes with base changes.  Now, observe that
  \[
    \rdown{ra}+\rdown{(m-r+1)a}=
    \begin{cases}
      m+1 \qtq{if} a=1,  \qtq{and}\\
      m \qtq{if} a=\tfrac{c-1}{c} \qtq{for some} 1<c\mid m.
    \end{cases}
  \]
  This gives that
  \[
    \omega_{X/B}^{[r]} \bigl(D_{\infty}+\tsum_i (ma_i-\rdown{(m-r+1)a_i})D_i\bigr) \simeq
    \omega_{X/B}^{[r]} \bigl(\tsum_i \rdown{ra_i}D_i\bigr).
  \]
  Thus the $\omega_{X/B}^{[r]} \bigl(\tsum_i \rdown{ra_i}D_i\bigr)$ are flat over $B$
  and commute with base changes.
\end{proof}

\begin{cor} \label{4.3.cor}
  Using the notation and assumptions of \autoref{codim.3.artin.thm},
  set $D_\infty:=\sum_{i:a_i=1}D_i$. Then
  $\sO_X(-D_\infty)$ and  $\sO_{D_\infty}$ are
  flat over $B$ and commute with base changes.
\end{cor}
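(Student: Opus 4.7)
The plan is to deduce the corollary from \autoref{codim.3.artin.thm} together with the cyclic-cover machinery of its proof. By \sfref{item:14con}, $g$ is flat, so $\mathcal O_X$ is flat over $B$, and the short exact sequence
\[
0\to\mathcal O_X(-D_\infty)\to\mathcal O_X\to\mathcal O_{D_\infty}\to 0
\]
makes the desired assertions for $\mathcal O_X(-D_\infty)$ and for $\mathcal O_{D_\infty}$ equivalent. As in Case~III, via \cite[9.17]{ModBook} one would reduce to $B$ Artinian and work Zariski-locally on $X$.

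Next I would reuse the degree-$m$ cyclic cover $\pi:Y\to X$ from Case~III (with $L$ trivial, as established there). Case~II applied to the pair $(Y,\pi^*D_\infty)\to B$, which Case~III shows is locally KSB stable, yields after a further localization $\omega_{Y/B}(\pi^*D_\infty)\simeq \mathcal O_Y$, hence $\omega_{Y/B}\simeq \mathcal O_Y(-\pi^*D_\infty)$. Twisting the formula
\[
\pi_*\omega_{Y/B}(\pi^*D_\infty)\simeq \bigoplus_{j=0}^{m-1}\omega_{X/B}^{[1-j]}\Bigl(D_\infty-\tsum_i\rdown{j a_i}D_i\Bigr)
\]
from Case~III by $\mathcal O_X(-D_\infty)$ — equivalently, applying the projection formula to $\pi_*(\omega_{Y/B}\otimes\pi^*\mathcal O_X(D_\infty))$ — produces
\[
\pi_*\omega_{Y/B}\simeq \bigoplus_{j=0}^{m-1}\omega_{X/B}^{[1-j]}\Bigl(-\tsum_i\rdown{j a_i}D_i\Bigr),
\]
in which the $j=1$ summand is $\omega_{X/B}^{[0]}\bigl(-\tsum_i\rdown{a_i}D_i\bigr)=\mathcal O_X(-D_\infty)$, since $\rdown{a_i}=1$ exactly when $a_i=1$.

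The main step is then to show that $\pi_*\omega_{Y/B}$ itself is flat over $B$ and commutes with base change, for then every direct summand — in particular $\mathcal O_X(-D_\infty)$ — inherits these properties, and the conclusion for $\mathcal O_{D_\infty}$ follows from the short exact sequence above. For this I would run the argument of Case~II on $(Y,\pi^*D_\infty)$: the identification $\omega_{Y/B}\simeq \mathcal O_Y(-\pi^*D_\infty)$ places this sheaf in precisely the setting handled by the gluing lemma \autoref{glue.lem.jk}, yielding the flatness and base-change compatibility of $\omega_{Y/B}$ on $Y$; the affine pushforward $\pi_*$ then preserves both properties. The most delicate point will be verifying that \autoref{glue.lem.jk} applies in this form on $Y$ — but since $(Y,\pi^*D_\infty)$ is in the Case~II situation after the localization, the same verification used there should carry over verbatim.
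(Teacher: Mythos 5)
Your route to the flatness of $\sO_X(-D_\infty)$ matches the paper's: compute $\pi_*\omega_{Y/B}$ by Grothendieck duality, observe the $j=1$ summand is $\sO_X(-D_\infty)$ once $L$ is trivial, and inherit flatness and base-change compatibility from $\omega_{Y/B}$ via direct summands. (One quibble: the sentence about ``the identification $\omega_{Y/B}\simeq\sO_Y(-\pi^*D_\infty)$ places this sheaf in the setting of the gluing lemma'' is muddled; the clean way to get $\omega_{Y/B}$ flat is simply \autoref{thm:main-artin} applied to $Y\to B$, using that the demi-normalization of $Y_b$ is slc, hence Du~Bois — or equivalently \autoref{glue.lem.jk} with $D=0$, where your identification plays no role.)

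There is, however, a genuine gap in the step for $\sO_{D_\infty}$. You claim at the outset that, since $\sO_X$ is flat, the short exact sequence $0\to\sO_X(-D_\infty)\to\sO_X\to\sO_{D_\infty}\to 0$ makes the assertions for $\sO_X(-D_\infty)$ and for $\sO_{D_\infty}$ ``equivalent,'' and you later invoke this to close the argument. This equivalence is false in the direction you need. Flatness of the outer terms of a short exact sequence does \emph{not} imply flatness of the cokernel: from the $\Tor$ sequence, $\sO_{D_\infty}$ is flat over $B$ iff $\sO_X(-D_\infty)\otimes_B N\to\sO_X\otimes_B N$ is injective for every $B$-module $N$, which over an Artinian local base reduces to injectivity of the fiber map $\sO_X(-D_\infty)|_{X_b}\to\sO_{X_b}$. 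That injectivity is exactly the extra input the paper supplies: $\sO_X(-D_\infty)$ is a direct summand of $\pi_*\omega_{Y/B}$, and since $\omega_{Y/B}$ commutes with base change, $\sO_X(-D_\infty)|_{X_b}$ is a direct summand of $(\pi_b)_*\omega_{Y_b}$ and hence an $S_2$ sheaf; then the map to $\sO_{X_b}$, being an isomorphism off a closed subset of codimension $\geq 1$, must be injective because an $S_1$ sheaf of full support has no associated primes in codimension $\geq 1$. Without some version of this argument your deduction of flatness (and base-change compatibility) of $\sO_{D_\infty}$ does not go through.
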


\begin{proof} Arguing as in {\sf\small Case III} above, we get that
  \[
  \pi_* \omega_{Y/B}\simeq \tsum_{j=0}^{m-1} \omega_{X/B}^{[1-j]}
  \bigl(-\tsum_i \rdown{ja_i}D_i\bigr)\hotimes L^{[-j]}.
  \]
  We proved that $L$ is trivial, so the $j=1$ summand is
  $\sO_X(-D_\infty)$. It is thus flat over $B$ with $S_2$ fibers.
  Therefore the induced maps  $\sO_X(-D_\infty)|_{X_b}\to\sO_{X_b}$
  are injections, hence $\sO_{D_\infty}$  is also
  flat over $B$ and commutes with base changes.
  \end{proof}
  
\section{KSBA stability}\label{sec.ksba.jk}

It is possible that the analog of \autoref{codim.3.artin.thm} holds for arbitrary
KSBA stable pairs as in \cite[Sec.8.2]{ModBook}.  Note that by \cite[7.5]{ModBook},
K-flatness of divisors is automatic in codimension $\geq 3$.  This would say that the
whole theory of KSBA stability is determined in codimension 2.

The next result is a very small step in this direction. It shows that the reduced
part of the boundary divisor behaves well in codimension $\geq 3$.

\begin{prop}\label{glue.lem.jk}
  Let $g:X\to B$ be a morphism of finite type and of pure relative dimension over a
  field of characteristic 0, $\Delta$ a relative Mumford $\bR$-divisor and
  $0\leq D\leq \Delta$ a relative Mumford $\bZ$-divisor.  Let $Z\subset X$ be a
  closed subset and set $U:=X\setminus Z$.  Assume that
  \begin{enumfull}
  \item\label{item:13.jk} $\codim (Z_b\subset X_b)\geq 3$ for every $b\in B$,
  \item\label{item:14.jk} $g\resto U: U\to B$ is flat with demi-normal fibers,
  \item\label{item:14b.jk}$\sO_U(-D|_U)$ is
    flat over $B$ and commutes with base changes, and
  \item\label{item:16.jk} the demi-normalization $(\wt X_b, \wt\Delta_b)$ of
    $ (X_b, \Delta_b)$ is semi-log-canonical for $b\in B$.
  \end{enumfull}
  Then $\omega_{X/B}(D)$ is flat over $B$ and commutes with base changes.
\end{prop}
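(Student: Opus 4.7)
The plan is to reduce to an Artinian base and then construct a degree-$2$ cyclic cover $\pi\colon Y \to X$ branched along $D$ in such a way that $\omega_{X/B}(D)$ appears as a direct summand of $\pi_*\omega_{Y/B}$; the desired flatness and base change compatibility will then follow from applying \autoref{thm:main-artin} to $Y \to B$.

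By \cite[9.17]{ModBook}, it suffices to treat the case $B = \Spec A$ with $A$ Artinian local of residue characteristic $0$. Since $D$ is a relative Mumford $\bZ$-divisor with $D \leq \Delta$ and $\wt\Delta_b$ has coefficients in $[0,1]$ (because $(\wt X_b, \wt\Delta_b)$ is slc), each $D_b$ pulls back to a reduced divisor on $\wt X_b$ lying below $\wt\Delta_b$. Let $j\colon U \into X$ be the inclusion. On $U$, the sheaf $\sO_U(-D|_U)$ is a line bundle, flat over $B$ and compatible with base change by \autoref{item:14b.jk}. The effective divisor $2D|_U$ gives a canonical section $\sO_U \into \sO_U(2D|_U)$, which extends via $j_*$ to a morphism $\sO_X \to j_*\sO_U(2D|_U)$; this endows $\sO_X \oplus j_*\sO_U(-D|_U)$ with an $\sO_X$-algebra structure, and we set
\[
\pi\colon Y := \Spec_X\bigl(\sO_X \oplus j_*\sO_U(-D|_U)\bigr) \longrightarrow X.
\]
Applying Grothendieck duality for the finite morphism $\pi$ reflexively (working on $U$ where everything is locally free and extending by $j_*$) yields
\[
\pi_*\omega_{Y/B} \simeq \omega_{X/B} \oplus \omega_{X/B}(D),
\]
so it will be enough to show that $\omega_{Y/B}$ is flat over $B$ and commutes with base change, since both summands of a flat, base-change-compatible direct sum inherit these properties.

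To apply \autoref{thm:main-artin} to $Y\to B$, I verify: (a) flatness of $Y \to B$ along $\pi^{-1}(U_k)$, which is immediate because $\pi_*\sO_Y|_U = \sO_U \oplus \sO_U(-D|_U)$ is a direct sum of $B$-flat sheaves; (b) $\codim(\pi^{-1}(Z_k) \subset Y_k) \geq 3$, inherited from $X$ because $\pi$ is finite; and (c) that the $S_2$-hull $\Spec(j'_*\sO_{\pi^{-1}(U_k)})$ is \DB. For (c), I identify this $S_2$-hull with the double cover of the demi-normalization $\wt X_k$ branched along $\wt D_k$. Since $(\wt X_k, \wt\Delta_k)$ is slc and $\wt D_k \leq \wt\Delta_k$ is reduced, the standard ramification formula
\[
K_{\wt Y_k} + \bigl(\wt D_k + \pi^*(\wt\Delta_k - \wt D_k)\bigr) \sim_{\bQ} \pi^*(K_{\wt X_k} + \wt\Delta_k)
\]
produces an effective boundary making the pair on the cover $\wt Y_k$ slc; slc singularities are \DB by \cite[Chap.~6]{SingBook}, giving (c).

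The main obstacle is the identification in step (c): a priori the global fiber $Y_k = \Spec_{X_k}(\sO_{X_k} \oplus \sO_{X_k}(-D_k))$ need not be well-behaved in codimension $\leq 2$, because $j_*\sO_U(-D|_U)$ is only guaranteed to commute with base change on $U$, not on all of $X$. The hypothesis $\codim(Z_b \subset X_b) \geq 3$ should nonetheless ensure that $Y_k$ and the double cover of $\wt X_k$ branched along $\wt D_k$ agree on the complement of a codimension-$\geq 3$ subset, so that their $S_2$-hulls coincide and the slc (hence \DB) property transfers from the cover of $\wt X_k$ to $Y_k$, completing the verification of \autoref{thm:main-artin}'s hypotheses.
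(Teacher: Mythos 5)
Your construction of $Y$ is, when unwound, the same as the paper's: with a local equation $t$ for $D$, the algebra $\sO_X\oplus j_*\sO_U(-D|_U)$ equipped with the multiplication coming from the canonical section of $\sO_U(2D|_U)$ is locally $\sO_X[s]/(s^2-t^2)\cong\sO_X\times_{\sO_D}\sO_X$, i.e.\ the pinching of two copies of $X$ along $D$. The paper literally takes $Y:=X_1\amalg_{D_1\simeq D_2}X_2$, computes $\pi_*\sO_Y\simeq\sO_X\oplus\sO_X(-D)$ and $\pi_*\omega_{Y/B}\simeq\omega_{X/B}\oplus\omega_{X/B}(D)$, verifies that the demi-normalization of $Y_b$ is slc, and applies \autoref{thm:main-artin}. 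So the underlying strategy is identical; the difference is purely one of presentation, and unfortunately your ``cyclic cover'' framing obscures what you have built and creates two difficulties the gluing picture avoids.

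First, the ramification formula you invoke,
\[
K_{\wt Y_k} + \bigl(\wt D_k + \pi^*(\wt\Delta_k - \wt D_k)\bigr) \sim_{\bQ} \pi^*(K_{\wt X_k} + \wt\Delta_k),
\]
is the Hurwitz-type formula for a \emph{simply ramified} double cover (where $Y$ is smooth along the preimage of $D$ and $\pi^*D=2E$ with $K_Y=\pi^*K_X+E$). But the cover you actually constructed is the \emph{nodal} one: $Y$ acquires a node (the gluing) along $\pi^{-1}(D)$, not a simple ramification. For the nodal cover there is no such pullback formula for $K_Y$ as a $\bQ$-Cartier divisor in the usual sense; the correct route to slc is the one the paper takes, namely passing to the normalization. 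The normalization of $\wt Y_k$ is $\wt{\ol X}_k\sqcup\wt{\ol X}_k$ with conductor augmented from $\ol C_k$ to $\ol C_k+\ol D_k$, and with boundary $\ol\Delta_k-\ol D_k$; the slc condition for $(Y_b,\pi^*(\Delta_b-D_b))$ therefore unwinds to precisely ``$(\ol X_b,\ol C_b+\ol\Delta_b)$ is lc,'' i.e.\ \autoref{item:16.jk}. Your formula happens to have the ``right shape'' but does not prove the claim for the object you built.

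Second, you explicitly flag but do not close the gap of identifying the $S_2$-hull of $Y_k$ with the double cover of $\wt X_k$. In the gluing picture this identification is immediate: demi-normalization commutes with the amalgamation, because both are determined in codimension $\leq 1$ and the gluing is transparent there, so $\wt Y_k = \wt X_k\amalg_{\wt D_k}\wt X_k$ on the nose. Had you recognized your $Y$ as the gluing, this identification and the slc verification would both be one-liners, exactly as in the paper.
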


\begin{proof}
  Take two copies $(X_i,\Delta_i)\simeq (X, \Delta)$ and
  glue them together along $D_1\simeq D_2$ to get
   \[
     g_Y:=(g_1\amalg g_2):Y:=X_1\amalg_{D_1\simeq D_2} X_2\to B.
     \]
     Let $\pi:Y\to X$ be the projection. Set $\Delta_Y:=\pi^*(\Delta-D)$ and consider
     the short exact sequence,
     \[
       \xymatrix{%
         0\ar[r] & \sO_{X_1}(-D_1)\ar[r] & \sO_Y\ar[r] & \sO_{X_2}\ar[r] & 0.  }
     \]
     As $\pi$ is finite, the push-forward of this remains exact and, using the fact
     that $\pi\resto{X_i}$ is an isomorphism, the natural morphism
     $\sO_X\to\pi_*\sO_Y$ provides a splitting of the push-forward of the above exact
     sequence. Therefore, $\pi_*\sO_Y\simeq \sO_X\oplus\sO_X(-D)$, and so
     $(Y,\Delta_Y) \to B$ is flat over $\pi^{-1}(U)$ with semi-log-canonical fibers.
     The demi-normalization of $(Y_b, \Delta_Y|_b)$ is the amalgamation of 2 copies
     of $\bigl (\wt X_b, \wt \Delta_b\bigr)$ along $\wt D_b$, hence
     semi-log-canonical. Thus $\omega_{Y/B}$ is flat over $B$ and commutes with base
     changes by \autoref{thm:main-artin}. Finally note that
     $\pi_*\omega_{Y/B}\simeq \omega_{X/B}\oplus\omega_{X/B}(D)$, thus
     $\omega_{X/B}(D)$ is flat over $B$ and commutes with base changes.
\end{proof}



\begin{rem} We claim that AFI stability, where we float all coefficients as in
  \cite[Sec.8.3]{ModBook}, is determined in codimension 2.

  To see this, note that the boundary divisor $\Delta$ is necessarily
  $\bR$-Cartier. Thus, for every point $x\in Z_b$ as in \autoref{codim.3.artin.thm},
  either $x\not\in \supp \Delta_b$, and then local stability holds by
  \autoref{codim.3.artin.thm}, or $x\in \supp \Delta_b$, and then $x$ is not an lc
  center of $X_b$.  Then $\depth_x \sO_{\wt X_b}\geq 3$ by \cite[7.20]{SingBook}
  (cf.~\cite{MR2918171} and \cite{MR2959783}), hence local stability holds by
  \cite[10.73]{ModBook}.
\end{rem}

\def\cprime{$'$} \def\cprime{$'$} \def\cprime{$'$} \def\cprime{$'$}
  \def\cprime{$'$} \def\polhk#1{\setbox0=\hbox{#1}{\ooalign{\hidewidth
  \lower1.5ex\hbox{`}\hidewidth\crcr\unhbox0}}} \def\cdprime{$''$}
  \def\cprime{$'$} \def\cprime{$'$} \def\cprime{$'$} \def\cprime{$'$}
  \def\cprime{$'$}
\providecommand{\bysame}{\leavevmode\hbox to3em{\hrulefill}\thinspace}
\providecommand{\MR}{\relax\ifhmode\unskip\space\fi MR}
\providecommand{\MRhref}[2]{%
  \href{http://www.ams.org/mathscinet-getitem?mr=#1}{#2}
}
\providecommand{\href}[2]{#2}

\end{document}
